\definecolor{myred}{RGB}{255,0,0}
\definecolor{mygreen}{RGB}{0,128,0}
\definecolor{myblue}{RGB}{0,0,255}
\definecolor{mypurple}{RGB}{128,0,128}
\definecolor{mygray}{RGB}{153,153,153}
\numberwithin{equation}{section}
\numberwithin{figure}{section}
\theoremstyle{plain}
\newtheorem{thm}{\protect\theoremname}[section]
\theoremstyle{definition}
\newtheorem{defn}[thm]{\protect\definitionname}
\theoremstyle{remark}
\newtheorem{rem}[thm]{\protect\remarkname}
\theoremstyle{plain}
\newtheorem{prop}[thm]{\protect\propositionname}
\theoremstyle{plain}
\newtheorem{conjecture}[thm]{\protect\conjecturename}
\theoremstyle{plain}
\newtheorem{cor}[thm]{\protect\corollaryname}
\theoremstyle{plain}
\newtheorem{lem}[thm]{\protect\lemmaname}
\theoremstyle{definition}
\newtheorem{example}[thm]{\protect\examplename}
\providecommand{\conjecturename}{Conjecture}
\providecommand{\corollaryname}{Corollary}
\providecommand{\definitionname}{Definition}
\providecommand{\examplename}{Example}
\providecommand{\lemmaname}{Lemma}
\providecommand{\propositionname}{Proposition}
\providecommand{\remarkname}{Remark}
\providecommand{\theoremname}{Theorem}
\begin{document}

\title[Legendrian large cables and non-uniformly
thick knots]{Legendrian large cables and new phenomenon for non-uniformly
thick knots}

\author{Andrew McCullough}

\begin{abstract}
We define the notion of a knot type having Legendrian large cables
and show that having this property implies that the knot type is not
uniformly thick. Moreover, there are solid tori in this knot type
that do not thicken to a solid torus with integer sloped boundary
torus, and that exhibit new phenomena; specifically, they have virtually overtwisted contact structures.
We then show that there exists an infinite family of ribbon knots
that have Legendrian large cables. These knots fail to be uniformly
thick in several ways not previously seen. We also give a general
construction of ribbon knots, and show when they give similar such
examples. 
\end{abstract}

\maketitle

\section{Introduction}

The \textit{contact width} $w\left(K\right)$ of a knot $K\subset\left(S^{3},\xi_{std}\right)$
was defined in \cite{key-1}, more or less as the largest slope of
a characteristic foliation on the boundary of a solid torus representing
the knot type $K$. They also defined $K$ to have the \textit{uniform thickness property} if any solid torus representing the knot type $\mathcal{K}$ can be thickened to a standard neighborhood of a Legendrian
representative of $K$ and $w\left(K\right)$ is equal to the maximal
Thurston-Bennequin invariant $\overline{tb}\left(K\right)$ of Legendrian
representatives of $K$. The usefulness of this property became evident
when Etnyre-Honda showed in the same work that, if $L\subset S^{3}$
is Legendrian simple and uniformly thick, then cables of $L$ are
Legendrian simple as well.  Recall that a knot type is Legendrian simple if Legendrian knots in this knot type are completely determined (up to Legendrian isotopy) by their Thurston-Bennequin invariant and rotation number.
They also showed that, if the cables are sufficiently negative, then
they too satisfy the uniform thickness property. This allows that
certain iterated cables of Legendrian simple knots are Legendrian
simple, for example. 

Uniform thickness has become a key hypothesis in work since then. For example, generalizing the above work on cables, in  \cite{key-9},
Etnyre-Vertesi showed that given a companion knot $L\subset S^{3}$
which is both Legendrian simple and uniformly thick, and a pattern
$P\subset S^{1}\times D^{2}$ satisfying certain symmetry hypothesis,
the knots in the satellite knot type $P_{K}$ may be understood. 

Broadly, if one wants to classify Legendrian knots in a satellite
knot type with companion knot $K\subset S^{3}$, and a pattern $P\subset S^{1}\times D^{2}$,
then as a first step one needs to understand,
\begin{enumerate}
\item contact structures on the complement of a neighborhood $N$ of $K$
\item contact structures on a neighborhood $N$ of $K$
\item a classification of Legendrian knots in the knot type of the pattern
$P$ in the possible contact structures on $N$.
\end{enumerate}
If $K$ is uniformly thick, then $N$ can always be taken to be a
standard neighborhood of $K$ with dividing curves on the boundary
of slope $\overline{tb}\left(K\right)$ (i.e. maximal Thurston-Bennequin
invariant of $K$), which reduces the problem to items $(1)$ and
$(3)$ above. Moreover, if $K$ is Legendrian simple and uniformly
thick, then $(1)$ is more or less known as well. If $K$ is not uniformly
thick, then understanding satellites is much more complicated. 

Similarly, uniform thickness can be useful in understanding contact
surgery constructions. A typical way to obtain a new contact 3-manifold is removing a solid torus in the knot type K, and gluing in some new contact solid torus. To understand the new manifold, one needs to understand items
$(1)$ and $(2)$ above, and the gluing map defining the surgery.
If $K$ is uniformly thick, then $N$ can always be taken to be a
standard neighborhood of $K$ with dividing curves on the boundary
of slope $\overline{tb}\left(K\right)$, which simplifies $(1)$ and
$(2)$ considerably.

On the other hand, there are knot types that are not uniformly thick.
For such knot types, it is important to understand in what ways they
can fail to be uniformly thick.

\subsection{New phenomenon for non-uniformly thick knots}

Given a knot type $\mathcal{K}\subset S^{3}$, the \textit{contact
width} of $\mathcal{K}$ is 
\[
w\left(\mathcal{K}\right)=\sup\left\{ slope\left(\varGamma_{\partial N}\right)\mid N\;solid\;torus\;representing\;\mathcal{K}\;with\;convex\;boundary\right\} .
\]
We say a solid torus represents $\mathcal{K}$ if its core is in the
knot type of $\mathcal{K}$. The contact width satisfies the inequality
$\overline{tb}\left(\mathcal{K}\right)\leq w\left(\mathcal{K}\right)\leq\overline{tb}\left(\mathcal{K}\right)+1$
\cite{key-1}.

A word about slope conventions. If $\mu,\lambda$ are the meridional,
respectively longitudinal, curves on a torus $T$ then $\left[\lambda\right]$,
$\left[\mu\right]$ form a basis for $H_{1}\left(T\right)$. A
$\left(p,q\right)$ curve, or a curve of slope $\nicefrac{q}{p}$,
will refer to any simple closed curve in $T$ that is in the homology
class of $p\left[\lambda\right]+q\left[\mu\right]$, where $p,q\in\mathbb{Z}$
are relatively prime. This is the opposite convention to the one used
in several of the main references in this paper, which were some of
the first works in convex surface theory. However, it is the convention
that is standard in low-dimensional topology. We caution however that,
when the phrase ``integer slope'' is used, it would correspond to
the phrase ``one over integer slope'' in \cite{key-5,key-1,key-2}
among others. 

We are now in position to define uniform thickness. We say that a
knot type $\mathcal{K}$ has the \textit{uniform thickness property}
or is \textit{uniformly thick} if
\begin{enumerate}
\item $\overline{tb}\left(\mathcal{K}\right)=w\left(\mathcal{K}\right)$,
and 
\item every solid torus representing $\mathcal{K}$ can be thickened to
a standard neighborhood of a maximal $tb$ representative of $\mathcal{K}$.
\end{enumerate}
By a standard neighborhood of a Legendrian knot $L$, we mean a solid
torus neighborhood $N$ of $L$ with convex boundary, dividing set
$\varGamma_{\partial N}$ consisting of two curves, with slope $tb\left(L\right)$. 

In past work, a knot type $\mathcal{K}$ can fail to have the uniform
thickness property in two ways. It can have neighborhoods whose slopes
are larger than $\overline{tb}$, as is the case with the unknot $U$,
which has $\overline{tb}\left(U\right)=-1$ and $w\left(U\right)=0$.
It can also happen that there are neighborhoods with slope strictly
less than $\overline{tb}$, but that do not thicken. The first and
only such examples are in \cite{key-1} and \cite{key-3} where it
is shown that all positive torus knots $T_{p,q}$ have tori $N$ with
slopes satisfying $slope\left(\varGamma_{\partial N}\right)<\overline{tb}\left(T_{p,q}\right)$
but that do not thicken. Moreover, the contact structure on all of
these $N$ is universally tight. 

In what follows we will denote the set of Legendrian knots, up to isotopy, in the
same topological knot type as $K$ by $\mathcal{L}\left(K\right)$.
We also use the convention that for a pair of relatively prime integers $p$ and $q$, the $\left(p,q\right)$ cable of $K$,
that is, the knot type of a curve of slope $\nicefrac{q}{p}$ on the
boundary of a torus neighborhood of $K$, is denoted by $K_{p,q}$.
Notice that if $p=\pm1$, then $K_{p,q}$ is a trivial cable in the
sense that it is isotopic to the underlying knot $K$. 
The following theorem of Etnyre-Honda motivates us to define some
new terminology.
\begin{thm}
(Etnyre-Honda, \cite{key-1}) If $K\subset S^{3}$ satisfies the uniform
thickness property, then for $\left|p\right|>1$ and any $L\in\mathcal{L}\left(K_{p,q}\right)$
we have that $tb\left(L\right)\leq pq$.\label{cables<pq}
\end{thm}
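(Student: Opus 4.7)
My approach is to use uniform thickness of $K$ to embed $L$ inside a standard neighborhood of a maximum-$tb$ Legendrian representative of $K$, and then to realize $L$ as a Legendrian ruling on a convex torus around $K$ where the standard cable framing formula gives the bound directly.

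Starting from $L \in \mathcal{L}(K_{p,q})$, choose a solid torus $V$ representing $K$ that contains $L$ in its interior (obtained by taking a cabling torus on which $L$ sits, pushing it slightly off $L$ in the direction away from $K$, and letting $V$ be the solid torus on the $K$-side). After a small perturbation $\partial V$ is convex, and by the uniform thickness hypothesis $V$ thickens to some $V' \supseteq V$ whose convex boundary has dividing set consisting of two curves of slope $\overline{tb}(K)$ in the Seifert framing of $K$; in particular $L$ still lies inside $V'$, which is a standard Legendrian neighborhood of a maximum-$tb$ representative of $K$.

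The next step is to produce a convex torus $T$ around $K$ inside $V'$ on which $L$ appears as a Legendrian ruling. Take a standard Legendrian neighborhood $N_L$ of $L$ inside $V'$; since $L$ is a $(p,q)$-cable of $K$, the torus $\partial N_L$ is isotopic to a torus around $K$. The region $V' \setminus \mathrm{int}(N_L)$ is a $T^2 \times I$ with convex boundary, and using edge-rounding, Honda's classification of tight contact structures on $T^2 \times I$, and the Legendrian Realization Principle inside this region, I would extract a convex torus $T$ around $K$ sitting in $V'$ on which $L$ is realized as a Legendrian ruling of slope $q/p$. Write the dividing slope of $T$ in the Seifert framing of $K$ as $a/b$; since $T$ lies inside $V'$, uniform thickness guarantees $a/b \leq \overline{tb}(K)$.

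Finally, apply the standard formula for the Thurston--Bennequin number of a Legendrian ruling on a convex torus around $K$: the surface framing of $L$ from $T$ differs from the Seifert framing of $L$ by $pq$ (the defining property of a $(p,q)$-cable), and the contact framing differs from the surface framing by $-\tfrac{1}{2}|L \cap \Gamma_T|$. Since $|L \cap \Gamma_T| = 2|pa - qb|$ (intersection of curves of slopes $q/p$ and $a/b$ on a torus, with two parallel dividing curves), one obtains
$$tb(L) = pq - |pa - qb| \leq pq,$$
as desired. The main technical obstacle is producing the convex torus $T$ carrying $L$ as a ruling: this hinges on uniform thickness to standardize the contact structure inside $V'$, together with careful convex-surface analysis of the $T^2 \times I$ region $V' \setminus \mathrm{int}(N_L)$. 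Once $T$ is in hand, the cable framing computation in the last step is routine, and the hypothesis $|p| > 1$ enters in ensuring that $(p,q)$ is a genuine cable slope so that the change-of-basis from $L$'s Seifert framing to the cable framing actually carries the factor $pq$.
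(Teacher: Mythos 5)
Your overall strategy --- thicken to a standard Legendrian neighborhood $V'$ of a maximal-$tb$ representative of $K$, realize $L$ as a Legendrian ruling on a boundary-parallel convex torus $T\subset V'$, then apply the framing relation $tb(L)=tw(L;T)+pq$ with $tw(L;T)\leq 0$ --- is the correct one and is precisely the Etnyre-Honda argument. Note that the paper itself does not reprove Theorem~\ref{cables<pq}; it is cited from \cite{key-1}, and the generalization the paper does prove (Lemma~\ref{lem:twisting<=00003Dpq}) handles the central step by simply invoking Theorem~\ref{convex torus} (any cable in a standard neighborhood can be put on a convex torus), which is exactly the technical hurdle you identify.

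There are two concrete problems with the details you propose. First, your route to the torus $T$ is topologically flawed: for $|p|>1$ the region $V'\setminus\mathrm{int}(N_L)$ is \emph{not} a $T^2\times I$ but a cable space, Seifert fibered over the annulus with one exceptional fiber of order $|p|$, so Honda's classification of tight contact structures on $T^2\times I$ and the attendant convex-surface tools do not apply to it as stated. The $T^2\times I$ one should actually work in is the layer between $\partial V'$ and a convex cabling torus around $K$ on which $L$ sits (or one should just cite the Etnyre-Honda lemma, as the paper does). Second, your closing explanation of the role of $|p|>1$ is off: the framing change $tb(L)=tw(L;T)+pq$ holds for every relatively prime pair $(p,q)$, so nothing about ``carrying the factor $pq$'' depends on $|p|>1$. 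The hypothesis $|p|>1$ is needed instead to ensure $L$ is not isotopic to the core of $V'$, so that $L$ genuinely lies on a boundary-parallel convex torus where $tw(L;T)\leq 0$ has force; indeed, the paragraph in the paper immediately after the theorem statement explains that trivial cables ($p=\pm 1$) violate the bound.
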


We generalize this result in Lemma~\ref{lem:twisting<=00003Dpq}
below. Notice that if we have a uniformly thick knot $K$ and we fix
a Legendrian representative $L\in\mathcal{L}\left(K\right)$ with
$tb\left(L\right)=k$, then there is an isotopy of $K$ which arranges
that $L$ is a trivial cable $L=K_{1,k-1}$. But then we have that
$tb\left(K_{1,k-1}\right)=tb\left(L\right)=k\nleqq k-1$, so the inequality
in Theorem~\ref{cables<pq} is not satisfied. 

\begin{defn}
Given $\left|p\right|>1$, we will say that a Legendrian cable $L\in\mathcal{L}\left(K_{p,q}\right)$
is \textit{large} if $tb\left(L\right)>pq$, and call $K_{p,q}$ \textit{Legendrian
large} if there exists large $L\in\mathcal{L}\left(K_{p,q}\right)$.
We will then say that $K$ has \textit{Legendrian large cables}, or
has the \textit{Legendrian large cable (LLC) property}, if any of
its non-trivial cables are Legendrian large. 
\end{defn}

Notice the example above indicates that if we allowed trivial cables, the LLC property would be vacuous. Our main theorem relates the LLC
property to uniform thickness.

\begin{thm}
If $K$ has Legendrian large cables, then there exist solid tori $V=S^{1}\times D^{2}$
representing $K$ such that $\xi\mid_{V}$ is virtually overtwisted.
Moreover, $V$ cannot be thickened to a standard neighborhood of a
Legendrian knot, and $K$ is not uniformly thick.\label{MainMainThm}
\end{thm}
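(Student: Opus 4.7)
My approach is to start with a Legendrian large cable $L \in \mathcal{L}(K_{p,q})$ satisfying $tb(L) > pq$ and use a standard contact neighborhood $N_L$ as the central building block. The dividing curves on $\partial N_L$ have slope $tb(L)$ measured in the Seifert framing of $L$, equivalently $tb(L) - pq > 0$ in the cable framing induced by the cabling torus around $K$. I would enlarge $N_L$ to a solid torus $V \subset S^3$ whose core is $K$ and which contains $L$ in its interior, for instance by thickening a small tubular neighborhood of $K$ until it engulfs $L$, and then arrange $\partial V$ to be convex. The resulting $V$ splits as $N_L$ together with a cable space $V \setminus \mathrm{int}(N_L)$, and the proof proceeds by analyzing the contact geometry of this decomposition.

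The two easier assertions follow quickly once $V$ is in hand. Suppose toward a contradiction that $V$ can be thickened to a standard neighborhood $V'$ of some Legendrian representative $K' \in \mathcal{L}(K)$. Then $\partial V'$ has integer dividing slope $tb(K') \in \mathbb{Z}$, and $L$ sits in the interior of $V'$ as a Legendrian cable of the core of $V'$. Applying Lemma~\ref{lem:twisting<=00003Dpq}, the promised generalization of Theorem~\ref{cables<pq} to any solid torus with integer boundary slope, then forces $tb(L) \leq pq$, contradicting largeness. Hence no such thickening exists, which is also exactly the obstruction showing that $K$ fails the uniform thickness property.

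The main content is the assertion that $\xi|_V$ is virtually overtwisted, which is also where the new phenomenon appears. My plan is to apply Honda's classification of tight contact structures on solid tori and $T^2 \times I$, together with the classification of contact structures on cable spaces. One decomposes the cable space $V \setminus \mathrm{int}(N_L)$ along a sequence of convex tori whose slopes move, via the Farey graph, between the slope $tb(L) - pq$ on $\partial N_L$ (measured in the cable framing) and the chosen dividing slope on $\partial V$ (measured in the framing of $K$). Because the strictly positive twisting $tb(L) - pq$ in the cable framing is incompatible with any universally tight filling whose outer slope is near an integer in the framing of $K$, the decomposition is forced to include basic slices of mixed signs; equivalently, $\xi|_V$ admits no monotone sign decomposition. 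Since $\xi|_V$ is tight as a submanifold of $(S^3,\xi_{std})$, this failure of sign homogeneity is precisely Honda's criterion for being virtually overtwisted.

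The main obstacle lies in this last paragraph: carefully translating between the cable framing on $\partial N_L$ and the framing of $K$ on $\partial V$, verifying that the slopes traversed in the Farey tessellation genuinely \emph{force} mixed signs rather than merely permitting them, and handling the case analysis depending on the sign of $p$ and on the precise value of $tb(L) - pq$. The underlying mechanism is uniform, however: the presence of a large cable produces a dividing slope on $\partial N_L$ that no universally tight extension to a neighborhood of $K$ can accommodate, which is simultaneously what blocks thickening and what forces the sign change responsible for virtual overtwistedness.
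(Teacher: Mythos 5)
Your setup and the two easier conclusions are essentially fine: taking a convex solid torus $V$ representing $K$ that contains the large cable $L$, and observing that in any putative standard-neighborhood thickening $V'$ one would have $tw\left(L;\partial V'\right)=tb\left(L\right)-pq>0$, contradicting Lemma~\ref{lem:twisting<=00003Dpq}, does rule out thickening and hence uniform thickness. (The paper instead derives non-thickenability from Kanda's result that an integer-slope standard neighborhood is universally tight together with the already-established virtual overtwistedness of $\xi\mid_{V}$, but your more direct route through the twisting inequality works.)

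The genuine gap is in the central claim, that $\xi\mid_{V}$ is virtually overtwisted. First, your proposed decomposition is not available as described: $V\setminus \mathrm{int}\left(N_{L}\right)$ is a cable space, not $T^{2}\times I$; for $\left|p\right|>1$ the core of $N_{L}$ is not isotopic to the core of $V$, so boundary-parallel convex tori in $V$ are not isotopic to $\partial N_{L}$, there is no path of parallel convex tori interpolating between the two boundary slopes, and Honda's Farey-path classification and the sign-consistency criterion (Theorems~\ref{classification of T2xI} and~\ref{owing to Honda}) apply to $T^{2}\times I$ and solid tori, not to this cable space. Second, the key assertion --- that positive twisting of $L$ relative to $\partial V$ is ``incompatible with any universally tight'' structure, i.e.\ that mixed signs are forced rather than merely permitted --- is exactly the statement that needs proof, and you explicitly defer it as the main obstacle. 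In the paper this is Proposition~\ref{virtually overtwisted improved}: a universally tight solid torus with convex boundary admits no Legendrian $\left(p,q\right)$ curve with $tw\left(L;\partial S\right)>0$. Its proof is the substantive work of Section~3 (reducing dividing curves by bypass attachments, cutting the complementary thickened torus along a convex annulus, Giroux's construction of invariant contact structures, extending to an $I$-invariant layer, and finally reducing to the integer-slope Lemma~\ref{lem:twisting<=00003Dpq}). Once that proposition is in place, virtual overtwistedness is immediate because $\xi\mid_{V}$ is tight in $\left(S^{3},\xi_{std}\right)$ and $tw\left(L;\partial V\right)=tb\left(L\right)-pq>0$; without it, or an equivalent argument replacing your cable-space sketch, the main assertion of Theorem~\ref{MainMainThm} remains unproved in your proposal.
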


Recall that the term \textit{universally tight} refers to a contact structure
that is tight, and that, when lifted to the universal cover, remains
tight. If the lift becomes overtwisted, then we will refer the the
contact structure as \textit{virtually overtwisted}. 

\begin{thm}
Given $K$, if there exists a slope $\nicefrac{q}{p}>\overline{tb}\left(K\right)$,
$\left|p\right|>1$, such that $K_{p,q}$ is Legendrian large, then
$w\left(K\right)>\overline{tb}\left(K\right)$.\label{MainMainThm2}
\end{thm}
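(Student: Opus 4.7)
My plan is to prove Theorem~\ref{MainMainThm2} by contradiction, invoking Lemma~\ref{lem:twisting<=00003Dpq} as the main technical input. Suppose, for contradiction, that $w(K) = \overline{tb}(K)$; since the paper has already noted the inequality $\overline{tb}(K) \leq w(K) \leq \overline{tb}(K) + 1$, the negation of the desired conclusion forces equality. By the Legendrian-large hypothesis there exists $L \in \mathcal{L}(K_{p,q})$ with $tb(L) > pq$, where the cable slope satisfies $q/p > \overline{tb}(K)$ by assumption.

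The first step is to realize $L$ as sitting in the interior of a convex solid torus $V$ representing $K$. This poses no real difficulty: since $L$ is topologically a $(p,q)$-curve on some torus around $K$, any sufficiently fat topological tubular neighborhood $V_0$ of $K$ contains $L$ in its interior, and we may perturb $\partial V_0$ to be convex through a $C^\infty$-small isotopy supported near $\partial V_0$ and disjoint from $L$. This leaves the Legendrian isotopy class of $L$ unchanged, and in particular preserves $tb(L)$.

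By our standing assumption $w(K) = \overline{tb}(K)$, the resulting dividing slope $s = \mathrm{slope}(\varGamma_{\partial V})$ satisfies $s \leq \overline{tb}(K) < q/p$. Lemma~\ref{lem:twisting<=00003Dpq} then applies to the Legendrian cable $L \subset V$, whose cable slope $q/p$ strictly exceeds the boundary dividing slope $s$, to give $tb(L) \leq pq$. This contradicts $tb(L) > pq$, so we must have $w(K) > \overline{tb}(K)$.

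The chief obstacle in this argument is simply ensuring that Lemma~\ref{lem:twisting<=00003Dpq} is stated in the form needed here: namely, that the bound $tb(L) \leq pq$ holds whenever $L \in \mathcal{L}(K_{p,q})$ sits inside a convex solid torus around $K$ with boundary dividing slope strictly less than $q/p$. This is the natural generalization of Etnyre--Honda's Theorem~\ref{cables<pq}, replacing the uniform-thickness hypothesis (which automatically produces a standard neighborhood of slope $\overline{tb}(K)$ containing $L$) with the weaker input of an arbitrary convex solid torus of sufficiently small slope. Once that lemma is in place, Theorem~\ref{MainMainThm2} follows by the one-line contradiction above.
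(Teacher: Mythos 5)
There is a genuine gap, and it sits exactly where you flag it as a mere matter of "stating the lemma in the right form." Lemma~\ref{lem:twisting<=00003Dpq} (and its extension, Lemma~\ref{lem:Let-improved}) only applies to solid tori whose convex boundary has \emph{integer} dividing slope, carrying the unique (hence universally tight) contact structure; Proposition~\ref{virtually overtwisted improved} extends the twisting bound to arbitrary slopes only under the hypothesis of universal tightness. The torus $V$ in your argument satisfies neither hypothesis: since it contains a Legendrian $(p,q)$ curve with $tw\left(L;\partial V\right)>0$, Proposition~\ref{virtually overtwisted improved} forces $\left.\xi\right|_{V}$ to be virtually overtwisted, and Lemma~\ref{lem:Let-improved} forces $slope\left(\varGamma_{\partial V}\right)\notin\mathbb{Z}$. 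Nor can you reduce to the integer-slope case by thickening: a virtually overtwisted solid torus can never sit (core-isotopically) inside a standard neighborhood, since universal tightness of the larger torus would pull back to the smaller one --- this is precisely the mechanism in the proof of Theorem~\ref{MainMainThm}. So the generalized statement you need, "dividing slope less than $q/p$ implies $tb(L)\leq pq$ for any tight structure," is exactly a claim about positively twisting cables in virtually overtwisted solid tori, which the paper does not prove and indeed explicitly poses as an open question in the introduction (for which $p,q$ does a virtually overtwisted $S^{1}\times D^{2}$ contain a Legendrian $(p,q)$ knot with excessive twisting?). Invoking it here is unsupported, and without it the contradiction never materializes.

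The paper's actual proof avoids this entirely by arguing directly rather than by contradiction: stabilize $L$ to $\widetilde{L}$ with $tb\left(\widetilde{L}\right)=pq$, so that $tw\left(\widetilde{L};\partial S\right)=0$ with respect to the cabling torus; then perturb that torus to be convex keeping $\widetilde{L}$ on it, and use $tw\left(\widetilde{L};\partial\widetilde{S}\right)=-\frac{1}{2}\left|\widetilde{L}\bullet\varGamma_{\partial\widetilde{S}}\right|$ to conclude the dividing curves are disjoint from $\widetilde{L}$, hence of slope exactly $q/p$. This exhibits a convex solid torus of slope $q/p>\overline{tb}\left(K\right)$, giving $w\left(K\right)\geq q/p$ with no appeal to any upper bound on twisting in small-slope tori. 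If you want to salvage your approach, you would have to prove your strengthened lemma, and the natural route to it is essentially the paper's construction anyway; as written, your proof assumes the hardest part.
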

\textbf{Question:} Are there knots $K$ and slopes $\nicefrac{q}{p}<\overline{tb}\left(K\right)$
such that $K_{p,q}$ is Legendrian large? 

\textbf{Question:} If $\xi$ is a virtually overtwisted contact structure
on $S=S^{1}\times D^{2}$, for which $p$ and $q$ is there a Legendrian
$\left(p,q\right)$ knot $L$ in $S$ with $tw\left(L\right)>pq$? 

The knots $K^{m}$ in Figure~\ref{Yasui knots} have $\overline{tb}\left(K^{m}\right)=-1$.
Building on the work of Yasui \cite{key-6}, we observe that $K_{\left(-n,1\right)}^{m}$
is Legendrian large whenever $m\leq-5$ and $1<n\leq\left\lfloor \frac{3-m}{4}\right\rfloor $. This leads to the following theorem. 

\begin{figure}

\begin{overpic}[unit=1mm,scale=1]{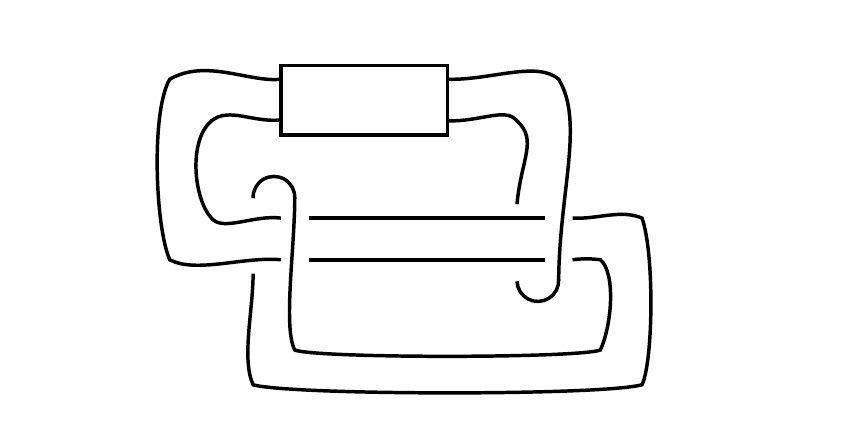}
\put(31,34){\colorbox{white}{\parbox{1.0cm}{%
    $m-1$}}}
\end{overpic}
\caption{The ribbon knots $K^{m}$. There are $m-1$ right handed full twists.}

\label{Yasui knots}

\end{figure}

\begin{thm}
The knots $K^{m}$ in Figure~\ref{Yasui knots} with $m\leq-5$,
are not uniformly thick in $\left(S^{3},\xi_{std}\right)$, in particular,
there are solid tori $T$ representing $K^{m}$ such that $slope\left(\varGamma_{\partial T}\right)>\overline{tb}\left(K^{m}\right)$
and $\xi\mid_{T}$ is tight, but virtually overtwisted. \label{Main Thm}
\end{thm}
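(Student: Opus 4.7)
The plan is to reduce Theorem~\ref{Main Thm} to the Legendrian large cable property for $K^m$ in the stated range and then invoke Theorems~\ref{MainMainThm} and~\ref{MainMainThm2}. Once LLC is established, non-uniform thickness and the existence of a tight-but-virtually-overtwisted solid torus follow formally; essentially all of the technical content is concentrated in establishing LLC for these particular knots.

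First I would produce, for each $m \leq -5$ and each integer $n$ with $1 < n \leq \lfloor(3-m)/4\rfloor$, a Legendrian representative $L \in \mathcal{L}(K^m_{-n,1})$ with $tb(L) \geq -n+1$, so that $tb(L) > pq = -n$ and $K^m_{-n,1}$ is Legendrian large in the sense of the definition preceding Theorem~\ref{MainMainThm}. Yasui's analysis of the handle diagrams for $K^m$ in \cite{key-6} is the natural input: the strategy is to legendrianize the relevant portion of Yasui's surgery description so that the $(-n,1)$ cable of $K^m$ appears as an explicit Legendrian front, then read $tb(L)$ off the front via the usual writhe-minus-left-cusps formula. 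I would verify that the arithmetic condition $4n \leq 3-m$ is precisely what forces the $-n+1$ twisting boxes, so that no extra stabilizations are needed and $tb(L) \geq -n + 1$ holds. This front-diagram book-keeping — tracking how many negative crossings the $(m-1)$-twist box and the cabling contribute, and verifying that the hypothesis $n \leq \lfloor(3-m)/4\rfloor$ gives exactly the right budget — is the main obstacle of the proof and the part I would expect to require the most care.

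Second, I observe that $|{-n}| > 1$ and that the slope of $K^m_{-n,1}$ is $1/(-n) = -1/n$, which is strictly greater than $-1 = \overline{tb}(K^m)$ for every $n \geq 2$. Thus the hypotheses of Theorem~\ref{MainMainThm2} are met, yielding $w(K^m) > \overline{tb}(K^m)$; in particular there is a solid torus $T$ representing $K^m$ with convex boundary whose dividing slope is strictly greater than $\overline{tb}(K^m) = -1$.

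Third, Theorem~\ref{MainMainThm} applied to the LLC input from the first step gives that $K^m$ is not uniformly thick and produces a solid torus in the knot type of $K^m$ on which the ambient contact structure restricts to a tight but virtually overtwisted contact structure. Both Theorems~\ref{MainMainThm} and~\ref{MainMainThm2} construct their tori in essentially the same way — by taking a standard neighborhood of the large cable $L$ and extending outward through the cabling torus of $K^m$ — so the same $T$ can be arranged to witness both the slope inequality and the virtually overtwisted behavior simultaneously. This yields a single solid torus $T$ representing $K^m$ with $\mathrm{slope}(\Gamma_{\partial T}) > \overline{tb}(K^m)$ and with $\xi|_T$ tight but virtually overtwisted, which is precisely the conclusion of Theorem~\ref{Main Thm}.
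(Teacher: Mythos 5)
Your strategy---establish the LLC property for the cables of $K^m$ and then invoke Theorems~\ref{MainMainThm} and~\ref{MainMainThm2}---is exactly the paper's, but two points deserve comment. In Step~1, the front-diagram bookkeeping you identify as ``the main obstacle'' is not needed: Yasui proves outright in \cite{key-6} that $\overline{tb}\bigl(K^m_{n,-1}\bigr)=-1$ for all $n\leq\frac{3-m}{4}$, and the paper simply cites this. Since a $(n,-1)$ curve and a $(-n,1)$ curve are the same unoriented cable with $pq=-n$, Yasui's result directly gives $tb=-1>-n=pq$ for $1<n\leq\lfloor\frac{3-m}{4}\rfloor$, which is the LLC property; no re-derivation of the front is required. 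In Step~3, your stated reason that a single $T$ witnesses both conclusions---that both theorems ``construct their tori in essentially the same way, by taking a standard neighborhood of the large cable $L$ and extending outward through the cabling torus''---misdescribes both proofs: Theorem~\ref{MainMainThm} takes an arbitrary convex solid torus $S$ containing $L$ as a $(p,q)$ curve, with no constraint on $\mathrm{slope}(\varGamma_{\partial S})$, while Theorem~\ref{MainMainThm2} first stabilizes $L$ down to $\tilde L$ with $tb(\tilde L)=pq$ and perturbs a torus through $\tilde L$ to be convex of slope $q/p$. You are right that the theorem statement asserts a single $T$ carrying both properties and that this needs justification---the paper's own two-line proof also cites the two theorems separately without tying the tori together---so you deserve credit for flagging the issue, but the bridge you offer does not in fact connect the two constructions.
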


\begin{rem}
Previously, there were no known examples of $\mathcal{K}$ in $\left(S^{3},\xi_{std}\right)$
with $w\left(\mathcal{K}\right)>\overline{tb}\left(\mathcal{K}\right)$,
except for the unknot. These are also the first examples of solid
tori in $\left(S^{3},\xi_{std}\right)$ with virtually overtwisted
contact structures. 
\end{rem}

It would be interesting to know what $w\left(K^{m}\right)$ is, and
what the possible non-thickenable tori in the knot type of $K^{m}$
are. We have the following partial result, following from Theorem~\ref{Main Thm}
and its proof.
\begin{prop}
For $m\leq-5$, the knots $K^{m}$ in Figure~\ref{Yasui knots} have
$w\left(K^{m}\right)\geq-\frac{1}{\left\lfloor \frac{3-m}{4}\right\rfloor }$.
\label{ctWidthEstimate}
\end{prop}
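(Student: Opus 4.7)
The plan is to follow the proof of Theorem~\ref{Main Thm} while tracking the dividing slope of the non-thickenable solid torus that it produces. Set $n_{0}:=\left\lfloor \frac{3-m}{4}\right\rfloor $. By the construction recalled just before Theorem~\ref{Main Thm}, the cable $K_{\left(-n_{0},1\right)}^{m}$ is Legendrian large, so there is some $L\in\mathcal{L}\left(K_{\left(-n_{0},1\right)}^{m}\right)$ with $tb\left(L\right)>\left(-n_{0}\right)\cdot 1=-n_{0}$. The slope of this cable is $\frac{1}{-n_{0}}=-\frac{1}{n_{0}}$, which strictly exceeds $\overline{tb}\left(K^{m}\right)=-1$ since $n_{0}>1$, so the hypothesis of Theorem~\ref{MainMainThm2} is met.

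Next, I would re-examine the proof of Theorem~\ref{MainMainThm2} in order to extract a quantitative slope bound rather than merely the existence asserted in its statement. That argument, given a Legendrian large $\left(p,q\right)$-cable $L$ with $tb\left(L\right)>pq$, constructs a convex solid torus $V$ representing $K$ on whose boundary $L$ is realized as a Legendrian ruling curve of slope $\frac{q}{p}$; the obstruction to thickening $V$ to a standard neighborhood of $K$ is precisely that the dividing slope of $\partial V$ must be at least the cable slope $\frac{q}{p}$, since otherwise the twisting of $L$ relative to $\partial V$, computed from intersections with the dividing set, would force $tb\left(L\right)\leq pq$. Applied to $\left(p,q\right)=\left(-n_{0},1\right)$, this yields a solid torus $V$ in the knot type of $K^{m}$ whose convex boundary has dividing slope at least $-\frac{1}{n_{0}}$.

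By the definition
\[
w\left(K^{m}\right)=\sup\left\{ slope\left(\varGamma_{\partial N}\right)\mid N\;\text{a solid torus representing}\;K^{m}\;\text{with convex boundary}\right\} ,
\]
we immediately conclude $w\left(K^{m}\right)\geq-\frac{1}{n_{0}}=-\frac{1}{\left\lfloor \frac{3-m}{4}\right\rfloor }$, as claimed.

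The main subtlety is that Theorem~\ref{MainMainThm2} is phrased only qualitatively, so what is actually required is to re-run its proof while recording the dividing slope of the solid torus produced from a Legendrian large $\left(p,q\right)$-cable. This ought to reduce to a standard convex surface construction: start with a standard neighborhood $N\left(L\right)$ of the large Legendrian cable $L$, enlarge it to a solid torus $V$ whose core is isotopic to $K^{m}$, arrange (via Legendrian realization and edge rounding) that $\partial V$ is convex with $L$ appearing as a ruling curve of slope $\frac{q}{p}$, and use the inequality $tb\left(L\right)>pq$ together with the formula for twisting on a convex torus to pin down the required lower bound on the dividing slope.
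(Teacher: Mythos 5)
Your overall plan is the paper's plan: specialize to the cable of slope $-\frac{1}{n_{0}}$ with $n_{0}=\left\lfloor \frac{3-m}{4}\right\rfloor$ and re-run the argument of Theorem~\ref{MainMainThm2} quantitatively, recording the dividing slope of the convex solid torus it produces and feeding that into the definition of $w$. However, the mechanism you describe for the key step is flawed. You propose to arrange a convex torus $\partial V$ isotopic to the cabling torus on which the large cable $L$ itself, with $tb\left(L\right)>pq$, sits as a Legendrian ruling curve. This cannot happen: for any Legendrian curve contained in a convex surface one has $tw\left(L;\partial V\right)=-\frac{1}{2}\left|L\bullet\varGamma_{\partial V}\right|\leq0$, while $tb\left(L\right)>pq$ means exactly that $tw\left(L;\partial V\right)>0$. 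For the same reason your claimed conclusion ``the dividing slope of $\partial V$ must be at least $\frac{q}{p}$'' does not follow from intersection counts; the dichotomy is that the twisting is strictly negative whenever the dividing slope differs from $\frac{q}{p}$, and zero only when the curve is parallel to the dividing set, so no one-sided inequality on the slope comes out of this.

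The paper's proof repairs exactly this point: first stabilize $L$ to $\widetilde{L}$ with $tb\left(\widetilde{L}\right)=pq=-n$, so that $tw\left(\widetilde{L};\partial S\right)=0$ on the cabling torus $\partial S$; then perturb $\partial S$ to be convex relative to a collar neighborhood of $\widetilde{L}$ ($C^{0}$ near $\widetilde{L}$, $C^{\infty}$ elsewhere), obtaining a convex torus $\partial\widetilde{S}$ containing $\widetilde{L}$. Since the twisting is zero, $\widetilde{L}$ is disjoint from $\varGamma_{\partial\widetilde{S}}$, which forces $slope\left(\varGamma_{\partial\widetilde{S}}\right)$ to equal the curve slope $-\frac{1}{n}$ exactly, and the bound $w\left(K^{m}\right)\geq-\frac{1}{n_{0}}$ follows. (Note the Legendrian largeness of the cable is what guarantees a representative with $tb\geq pq$ exists to stabilize down to $pq$; it plays no further role in pinning down the slope.) So your argument needs the stabilization step inserted and the ``at least'' claim replaced by the equality coming from zero twisting; with that correction it coincides with the paper's proof.
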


The origin of the examples in Theorem~\ref{Main Thm} come from an
interesting connection between contact structures and the famous cabling
conjecture first observed in \cite{key-10}. Lidman-Sivek showed
that for a knot $K$ with $\overline{tb}\left(K\right)>0$,
Legendrian surgery on $K$ (i.e. $\left(tb\left(K\right)-1\right)$-surgery)
never yields a reducible manifold. They conjectured that this might
be true with no condition on $\overline{tb}\left(K\right)$. This
is equivalent to the following conjecture for any $K$ in $S^{3}$.
\begin{conjecture}
For a Legendrian representative in the knot type $L\in\mathcal{L}\left(K_{p,q}\right)$,
$tb\left(L\right)\leq pq$.\label{conj1}
\end{conjecture}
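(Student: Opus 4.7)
The plan is to route the argument through the equivalence with the cabling conjecture of Gonz\'alez-Acu\~na--Short, and then to obstruct a Stein fillable contact structure on a reducible 3-manifold. Concretely, suppose $L\in \mathcal{L}(K_{p,q})$ satisfies $tb(L)>pq$; after Legendrian stabilizations we may assume $tb(L)=pq+1$, so contact $(-1)$-surgery on $L$ is topologically $pq$-surgery on $K_{p,q}$. A classical computation of Gordon identifies this with the reducible manifold $L(p,q)\# S^{3}_{q/p}(K)$. Conversely, the cabling conjecture pins down any reducible surgery on a knot in $S^3$ as cabling-slope surgery on a cable, so Conjecture~\ref{conj1} is equivalent to the assertion that Legendrian surgery on any knot in $(S^3,\xi_{std})$ never produces a reducible 3-manifold; this is the form of the problem I would attack.

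Next, Legendrian surgery builds a Weinstein cobordism, so the resulting contact structure is Stein fillable, hence tight by Eliashberg. By Colin's tight connected sum decomposition, the induced structure restricts to tight -- indeed Stein fillable -- structures on each summand of $L(p,q)\# S^{3}_{q/p}(K)$. The strategy is then to show that the specific tight contact structure produced on $L(p,q)$ by contact $(-1)$-surgery on the large cable $L$ is not Stein fillable: combine the Lisca--Honda classification of tight contact structures on lens spaces with contact-invariant constraints on $S^{3}_{q/p}(K)$ coming from the companion $K$ itself, for instance via the Ozsv\'ath--Szab\'o contact invariant in Heegaard Floer homology or monopole Floer correction terms in the spirit of Lidman--Sivek.

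The main obstacle is precisely this final obstruction step. When $\overline{tb}(K)>0$, the correction-term argument of Lidman--Sivek closes the case, but for $\overline{tb}(K)\leq 0$ the currently available gauge-theoretic obstructions appear too weak: Stein fillable contact structures on $L(p,q)$ genuinely exist, and one must distinguish those arising from a cable. A possible route toward a $\overline{tb}$-independent obstruction is to exploit the large cable property itself -- by Theorem~\ref{MainMainThm}, a Legendrian representative with $tb(L)>pq$ forces the companion solid torus to admit a non-thickenable, virtually overtwisted contact structure, and this rigid geometric datum might be leveraged to refine the correction-term obstruction. Absent such a refinement, however, the full conjecture appears to be as deep as the cabling conjecture itself, so I would expect any complete proof to either subsume the cabling conjecture or to produce a genuinely new contact-topological invariant insensitive to the sign of $\overline{tb}(K)$.
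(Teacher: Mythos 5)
The statement you set out to prove is not a theorem of this paper at all: it appears as Conjecture~\ref{conj1} (a reformulation of the Lidman--Sivek conjecture) precisely so that it can be recorded as \emph{false}. The paper's central examples are counterexamples to it: by Yasui's work, the knots $K^{m}$ of Figure~\ref{Yasui knots} with $m\leq-5$ have cables $K^{m}_{n,-1}$ with $\overline{tb}\left(K^{m}_{n,-1}\right)=-1>-n=pq$ for every $1<n\leq\left\lfloor \frac{3-m}{4}\right\rfloor$, and Legendrian surgery on a stabilized representative with $tb=pq+1$ yields the reducible manifold predicted by Gordon's formula. Theorems~\ref{counterexamples} and~\ref{Main Thm} are built exactly on this failure, so no proof along your lines (or any other) can close the argument; the honest conclusion of your attempt should have been that the inequality $tb(L)\leq pq$ is simply not true in general.

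This also explains why the final obstruction step you identify as ``the main obstacle'' cannot be supplied: Legendrian surgery does give a Stein fillable structure, but Stein fillability is preserved under connected sum, and fillable structures on $L(p,q)\#S^{3}_{q/p}(K)$ genuinely arise -- Yasui's construction realizes them explicitly. This is consistent with Lidman--Sivek only because their theorem requires $\overline{tb}(K)>0$, whereas the counterexamples have $\overline{tb}(K^{m})=-1$; the ``weakness'' of the gauge-theoretic obstructions for $\overline{tb}\leq 0$ that you observe is not a technical shortfall awaiting a refinement but a reflection of the fact that the statement fails in that range. There is also an internal tension in your proposed fix: you invoke Theorem~\ref{MainMainThm}, which presupposes a cable with $tb(L)>pq$, i.e.\ the negation of the inequality you are trying to establish. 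The correct reading of Conjecture~\ref{conj1} in this paper is as a statement to be disproved, with the interesting open question being what replaces it (for instance Conjecture~\ref{conjecture} and the questions posed after Theorem~\ref{MainMainThm2}), not as a target for proof.
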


If $tb\left(L\right)>pq$ for such an $L$, then there exists $L'$
with $tb\left(L'\right)=pq+1$ (we can always stabilize to achieve
this). Legendrian surgery on this $L'$ would then yield a reducible
manifold. In \cite{key-6}, Yasui gave some interesting examples
of ribbon knots which we will denote $K^{m}$, shown in Figure~\ref{Yasui knots}.
In what follows, we will
be concerned with integers $m<0$. 
\begin{thm}
(Yasui, \cite{key-6}) There exist infinitely many Legendrian knots in $\left(S^{3},\xi_{std}\right)$, Figure~\ref{Yasui knots}, 
each of which yields a reducible 3-manifold by a Legendrian surgery
in the standard tight contact structure. Furthermore, $K$ can be
chosen so that the surgery coefficient is arbitrarily less than $\overline{tb}\left(K\right)$. 
\end{thm}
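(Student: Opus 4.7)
The plan is to sketch an argument compatible with the cabling framework developed in this paper that recovers both of Yasui's conclusions: the existence of infinitely many Legendrian knots in $(S^{3},\xi_{std})$ admitting a reducible Legendrian surgery, and the ability to make the surgery coefficient arbitrarily less than $\overline{tb}(K)$. The guiding principle is that any Legendrian cable $L\in\mathcal{L}(K_{p,q})$ with $tb(L)>pq$ can be stabilized to some $L'$ with $tb(L')=pq+1$, so that Legendrian surgery on $L'$ is topologically $pq$-surgery on a knot in $\mathcal{L}(K_{p,q})$; the classical cabling slope formula of Gordon then identifies the result as $L(p,q)\#Y_{K}$, which is reducible whenever $|p|>1$.

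First I would verify $\overline{tb}(K^{m})=-1$: an upper bound follows from the slice--Bennequin inequality applied to the ribbon disk for $K^{m}$ (so $g_{4}(K^{m})=0$ forces $\overline{tb}\leq -1$), with a matching lower bound realized by a direct Legendrian projection of the front in Figure~\ref{Yasui knots}. Next, and most substantively, I would construct, for each $m\leq -5$ and each integer $n$ with $1<n\leq\lfloor(3-m)/4\rfloor$, a Legendrian representative $L$ of the cable $K^{m}_{(-n,1)}$ with $tb(L)>-n$. The construction should exploit the $m-1$ right-handed full twists of $K^{m}$: threading a Legendrian $(-n,1)$ pattern through the twist region converts that twisting budget into extra Thurston--Bennequin, pushing $tb(L)$ strictly above the cabling product $pq=-n$. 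The arithmetic bound $n\leq\lfloor(3-m)/4\rfloor$ should emerge as the cost in full twists per strand of the $n$-stranded cable. Stabilizing to $L'$ with $tb(L')=-n+1$ and invoking the cabling slope formula then gives a reducible Legendrian surgery with coefficient $-n$. Since $\lfloor(3-m)/4\rfloor\to\infty$ as $m\to-\infty$ and distinct values of $m$ yield distinct topological knot types, this produces infinitely many Legendrian knots with reducible Legendrian surgery and makes the coefficient $-n$ arbitrarily smaller than $\overline{tb}(K^{m})=-1$.

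The main obstacle is precisely the explicit construction of the Legendrian large cable $L$ with $tb(L)>pq$; everything else reduces to a standard $\overline{tb}$ computation, a routine stabilization, or a citation of the cabling surgery theorem. The hard part is tracking the front-diagram manipulation inside the twist region that converts right-handed twists into Thurston--Bennequin of the cable, and verifying that the number of strands of the $(-n,1)$ cable and the twist count $m-1$ interact in exactly the way encoded by the bound $n\leq\lfloor(3-m)/4\rfloor$. With that construction established, Theorem~\ref{MainMainThm} of this paper feeds back into the picture and shows that these same cables force $K^{m}$ to be non-uniformly thick, which is the content of Theorem~\ref{Main Thm}.
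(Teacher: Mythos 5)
This statement is not proved in the paper at all: it is quoted as Yasui's theorem from \cite{key-6}, and the only internal discussion is the remark after Conjecture~\ref{conj1}, namely that a cable $L\in\mathcal{L}\left(K_{p,q}\right)$ with $tb\left(L\right)>pq$ can be stabilized to $L'$ with $tb\left(L'\right)=pq+1$, so that Legendrian surgery on $L'$ is smooth $pq$-surgery on the cable and hence reducible. Your proposal reproduces exactly that easy reduction, which is fine as far as it goes (with one small caveat: ``reducible whenever $\left|p\right|>1$'' also needs the companion summand $S^{3}_{q/p}\left(K\right)$ to not be $S^{3}$, which here follows from the nontriviality of $K^{m}$ and Gordon--Luecke, but is false e.g.\ for $\left(p,\pm1\right)$-cables of the unknot).

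The genuine gap is that the entire mathematical content of Yasui's theorem is the part you explicitly defer: producing, for $m\leq-5$ and $1<n\leq\left\lfloor\frac{3-m}{4}\right\rfloor$, an actual Legendrian representative of the cable $K^{m}_{n,-1}$ with $tb=-1>-n=pq$ (equivalently, Yasui's computation $\overline{tb}\left(K^{m}_{n,-1}\right)=-1$). Your description --- ``threading a Legendrian $\left(-n,1\right)$ pattern through the twist region converts that twisting budget into extra Thurston--Bennequin,'' with the bound $n\leq\left\lfloor\frac{3-m}{4}\right\rfloor$ expected to ``emerge as the cost in full twists per strand'' --- is a heuristic for a construction, not a construction. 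One must exhibit the front, count crossings and cusps to verify the $tb$, and verify that the resulting Legendrian knot really lies in the knot type $K^{m}_{n,-1}$; this is precisely the several-page front manipulation in Yasui's paper (the argument this paper later cites and localizes in the proof of Corollary~\ref{Yasui examples corollary}). The $\overline{tb}\left(K^{m}\right)=-1$ computation and the stabilization/cabling-slope step you give are routine and are not where the theorem lives; without the explicit large-cable representative the argument does not establish the statement. (Also note the theorem's ``arbitrarily less than $\overline{tb}\left(K\right)$'' refers to the surgered Legendrian knot, i.e.\ the cable, so what is needed is the lower bound $\overline{tb}\left(K^{m}_{n,-1}\right)\geq-1$ together with coefficient $-n\to-\infty$, which again rests entirely on the missing construction.)
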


Yasui shows that for infinitely many pairs of integers $m,n\in\mathbb{Z}$ with $m\leq-5$,
Legendrian surgery on the cables $K_{n,-1}^{m}$ yields a reducible
manifold. This shows Lidman-Sivek's conjecture to be false, and
stands in contrast with Theorem~\ref{cables<pq} of Etnyre-Honda. 

We can now easily see that $K^{m}$, Figure~\ref{Yasui knots}, does
not have the uniform thickness property. 
\begin{thm}
For integers $m\leq-5$, the ribbon knots $K^{m}$ are not uniformly
thick.\label{counterexamples}
\end{thm}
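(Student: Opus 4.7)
The plan is to derive this as an immediate corollary of the (parenthetically asserted) fact that the cables $K^m_{n,-1}$ are Legendrian large, together with the contrapositive of Theorem~\ref{cables<pq} of Etnyre--Honda. So the proof is essentially a one-line application of tools already laid out, and my job is to record the application cleanly.

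First, I would fix $m \leq -5$ and observe that the hypothesis $m \leq -5$ guarantees $\lfloor (3-m)/4\rfloor \geq 2$, so we may pick an integer $n$ with $1 < n \leq \lfloor (3-m)/4\rfloor$; concretely $n=2$ always works. Second, I would invoke the observation (following from Yasui's construction and recorded in the introduction just before the figure) that for these $m$ and $n$ the cable $K^m_{-n,1}$ is Legendrian large. Unpacking the definition, this produces a Legendrian representative $L \in \mathcal{L}(K^m_{-n,1})$ with
\[
tb(L) \;>\; (-n)\cdot 1 \;=\; -n.
\]
The same knot type can be written as $K^m_{n,-1}$ by reversing orientation of the cable curve, so we equally have $L \in \mathcal{L}(K^m_{n,-1})$ with $tb(L) > pq = -n$, and the cabling parameter satisfies $|p| = n > 1$.

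Third, I would invoke the contrapositive of Theorem~\ref{cables<pq}: if $K^m$ were uniformly thick, then, since $|n| > 1$, every Legendrian representative of $K^m_{n,-1}$ would have $tb \leq pq = -n$. The Legendrian $L$ produced above violates this bound. Hence $K^m$ cannot be uniformly thick, which is what the statement asserts.

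There is no serious obstacle here: all the genuine work is bundled into the observation that Yasui's Legendrian surgery examples actually realize the Legendrian large condition (i.e.\ that the $tb$ of the relevant cable exceeds the cabling slope), and into the Etnyre--Honda bound $tb(L)\le pq$ under uniform thickness. Once both of those are taken as given, the proof of Theorem~\ref{counterexamples} is just the contrapositive. The mildly delicate piece I would double-check while writing is the arithmetic bookkeeping relating Yasui's cable parameters to the slope convention $(p,q)$ used here (so that Legendrian large indeed means $tb > pq$ with $p,q$ in this paper's convention), but this is a bookkeeping matter rather than a real difficulty.
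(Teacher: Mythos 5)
Your proposal is correct and follows essentially the same route as the paper's own proof: the paper cites Yasui's computation that $\overline{tb}(K^m_{n,-1})=-1$ for $n\leq\frac{3-m}{4}$ and then applies Theorem~\ref{cables<pq} in the contrapositive, exactly as you do. Your phrasing via the ``Legendrian large'' terminology and the observation recorded in the introduction is just a repackaging of the same ingredient ($\overline{tb}(K^m_{n,-1})=-1>-n=pq$), so the two arguments coincide.
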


The interesting features of how $K^{m}$ fails to be uniformly thick
given in Theorem~\ref{Main Thm} require much more work. 
\begin{proof}
In \cite{key-6}, Yasui shows that for integers $n\leq\frac{3-m}{4}$,
the cables $K_{n,-1}^{m}$ have the property that $\overline{tb}\left(K_{n,-1}^{m}\right)=-1$.
But by Theorem~\ref{cables<pq}, if $K^{m}$ is uniformly thick,
then we must have that $\overline{tb}\left(K_{n,-1}^{m}\right)\leq-n$.
So for any $m\leq-5$ and any $1<n\leq\left\lfloor \frac{3-m}{4}\right\rfloor $
we arrive at a contradiction. 
\end{proof}
Theorem~\ref{counterexamples} can be used to address the following
question.

\begin{conjecture}
If $K\subset S^{3}$ is fibered, then $K$ is uniformly thick if and
only if $\xi_{K}\neq\xi_{Std}$, where $\xi_{K}$ is the contact structure
induced by an open book decomposition of $K$.\label{conjecture}
\end{conjecture}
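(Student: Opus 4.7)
The plan is to split the conjecture according to Eliashberg's classification of tight contact structures on $S^{3}$: since $\xi_{Std}$ is the unique tight contact structure on $S^{3}$ up to isotopy, the hypothesis $\xi_{K}\neq\xi_{Std}$ is equivalent to $\xi_{K}$ being overtwisted, and the biconditional then reduces to two implications that I would attack by quite different methods.

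For the forward direction ($\xi_{K}=\xi_{Std}$ implies $K$ is not uniformly thick), the open book $(K,\Sigma)$ supports $\xi_{Std}$, so the binding admits a convex solid torus neighborhood $N$ whose boundary dividing curves are parallel copies of the page framing, which for a fibered knot coincides with the Seifert framing (slope $0$ in the paper's convention). When $\overline{tb}(K)<0$, as for the unknot, this already gives $w(K)\geq 0>\overline{tb}(K)$ and hence non-uniform thickness directly. When $\overline{tb}(K)\geq 0$, I would instead exploit the tight structure $\xi\mid_{N}$ together with the standard Farey-graph subdivision to produce convex tori inside $N$ of rational slope just below $0$, on which the ruling curves realize $(p,q)$ cables whose twisting number strictly exceeds $pq$ for suitable choices of $p,q$. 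This makes $K_{p,q}$ Legendrian large, and Theorem~\ref{MainMainThm} then concludes that $K$ is not uniformly thick.

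For the reverse direction ($\xi_{K}\neq\xi_{Std}$ implies $K$ uniformly thick), assume $\xi_{K}$ is overtwisted. Given any convex solid torus $V$ representing $K$, the exterior $S^{3}\setminus V$ inherits a surface decomposition from the pages of the open book. I would combine Honda's classification of tight contact structures on solid tori with edge-rounding along convex-isotoped pages to analyze $\xi\mid_{S^{3}\setminus V}$, and argue that the overtwistedness of $\xi_{K}$ — equivalently, the inability of the monodromy of $(K,\Sigma)$ to be expressed as a product of positive Dehn twists after any positive stabilization — forces the existence of a bypass attachment that enlarges $V$ to a standard neighborhood of a Legendrian representative of $K$ with $tb=\overline{tb}(K)$.

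The reverse direction is where I expect the main difficulty to lie. Passing from the global overtwistedness of $\xi_{K}$ to a local thickening statement valid for \emph{every} convex solid torus representative of $K$ has no known general mechanism; even the individual known cases, such as negative torus knots and the figure-eight knot, have required intricate case-specific convex surface arguments to establish uniform thickness. A uniform proof covering all fibered knots with overtwisted supporting contact structure would likely need a new general bridge between Giroux's correspondence and convex decompositions of knot complements, and that is where the bulk of the technical work will concentrate.
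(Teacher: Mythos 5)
This statement is a \emph{conjecture}, not a theorem, and the paper does not prove it. In fact, the paper's own results, combined with work of Min and Hedden, \emph{disprove} one direction — precisely the direction you identify as the ``hard'' one and propose to tackle. The knots $K^{m}$ for $m\leq -5$ are fibered (Min, \cite{key-14}), are slice and non-strongly-quasipositive, hence have $\xi_{K^{m}}\neq\xi_{Std}$ by Hedden's result \cite{key-12}, and yet Theorem~\ref{counterexamples} shows they are \emph{not} uniformly thick. This is a direct counterexample to the implication ``$\xi_{K}\neq\xi_{Std}\Rightarrow K$ uniformly thick.'' So the bypass/convex-decomposition strategy you sketch for the reverse direction cannot succeed: no argument can establish a false statement, and the difficulty you anticipated is not a technical obstacle but an indication that the implication is simply wrong. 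The paper explicitly records this: ``at least one direction of this conjecture is false. The other direction remains an interesting open question.''

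Your forward direction ($\xi_{K}=\xi_{Std}\Rightarrow K$ not uniformly thick) addresses the part that actually remains open, but the sketch has its own issues. A neighborhood of the binding of an open book is not automatically a convex solid torus with dividing slope equal to the page/Seifert framing; near the binding the contact planes are tangent to the binding, and the dividing set on a nearby torus is governed by the characteristic foliation after perturbing to convexity, not by the page framing per se. The fact that $w(U)=0>\overline{tb}(U)=-1$ for the unknot is a special computation, not a general consequence of the open book supporting $\xi_{Std}$. And for $\overline{tb}(K)\geq0$, the claim that ruling curves on tori of slope just below $0$ produce Legendrian large cables needs justification that is absent; Theorem~\ref{cables<pq} in fact shows this \emph{cannot} happen if $K$ is uniformly thick, so you would be assuming what you want to prove. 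The honest status is: the forward direction is open, and the reverse direction is false by the paper's own examples.
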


Building on our above work, Hyunki Min \cite{key-14} recognized that
the $K^{m}$ are counterexamples. Min showed that the $K^{m}$ are
all fibered. We also know that they are slice and non-strongly-quasipositive,
which implies that $\xi_{K}\neq\xi_{Std}$ by a result of Matthew
Hedden \cite{key-12}. Theorem~\ref{counterexamples} tells us that
$K^{m}$ are not uniformly thick however, and so at least one direction
of this conjecture is false. The other direction remains an interesting
open question.

\subsection{Ribbon knots and Legendrian large cable examples}

Yasui's examples are all ribbon knots with Legendrian large cables,
and can be generalized to other families of ribbon knots. We first
observe a folk result that any ribbon knot can be described in a simple way.
\begin{thm}
\label{unknot theorem }Suppose $K\subset S^{3}$ is an arbitrary
ribbon knot with $n\in\mathbb{N}$ ribbon singularities. Then there
is an algorithm to construct a 2-handlebody for $D^{4}$ having $n-1$
or less 1-2 handle canceling pairs such that there is an unknot $U$
in the boundary of the 1-sub-handlebody which, after attaching the
2-handles, is isotopic to $K$.
\end{thm}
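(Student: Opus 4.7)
The plan is to start from an explicit geometric decomposition of a ribbon disk for $K$ and translate it directly into a handle decomposition of $D^{4}$.

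First, I would fix a ribbon disk $D$ for $K$ realizing the given $n$ ribbon singularities, and invoke the standard fact that $D$ may be expressed as the union of $n+1$ pairwise disjoint embedded planar disks $D_{0},\ldots,D_{n}\subset S^{3}$, whose boundaries form an unlink, together with $n$ disjoint bands $b_{1},\ldots,b_{n}$ connecting these disks, each band transversely piercing exactly one other disk and thereby contributing exactly one ribbon singularity (splitting bands through multiple disks if necessary, to normalize the decomposition). Because $D$ is simply connected, the graph $\Gamma$ with vertices $\{D_{i}\}$ and edges $\{b_{j}\}$ is a tree, and I may fix $D_{0}$ as a root.

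Next, I would choose a leaf of $\Gamma$, say $(D_{n},b_{n})$, and set it aside to be absorbed into the unknot $U$; this leaf will not cost a cancelling pair. For each of the remaining $n-1$ bands $b_{i}$ I would attach to $D^{4}$ a 1-handle whose core is parallel to $b_{i}$ (so that, in the new 4-manifold, $b_{i}$ is replaced by the core of the 1-handle and the two disks at its endpoints are tubed together through the 4-ball), together with a 2-handle whose attaching circle is a small meridional loop of the disk pierced by $b_{i}$ at its ribbon singularity, carrying the framing prescribed by the planar embedding. By construction the 2-handle's attaching circle runs geometrically once over the paired 1-handle, so each pair cancels and the resulting 2-handlebody is diffeomorphic to $D^{4}$.

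With this decomposition in hand, the 1-sub-handlebody is $\natural_{n-1}(S^{1}\times D^{3})$ and its boundary is $\#_{n-1}(S^{1}\times S^{2})$. I define $U$ to be the boundary of the disk obtained by taking $D_{0}$, tubing it to $D_{1},\ldots,D_{n-1}$ through the newly attached 1-handles, and then adjoining the leaf data $b_{n}\cup D_{n}$. Since this composite disk is embedded in the simply connected 1-sub-handlebody, $U$ is an unknot in $\#_{n-1}(S^{1}\times S^{2})$.

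The final and most delicate step is to verify that after attaching the $n-1$ 2-handles the unknot $U$ becomes isotopic to $K$ in $\partial D^{4}=S^{3}$. The handle cancellation reverses the tubing, so attaching the $i$-th 2-handle has the effect of reinstating the original band $b_{i}$ together with its ribbon singularity back in $S^{3}$; after all $n-1$ cancellations the composite disk has been reassembled into the ribbon disk $D$, and its boundary $U$ has become $\partial D=K$. The principal obstacle is performing this tracking cleanly, keeping control of the framings and identifying the boundary 3-manifold at each intermediate stage, and it is the careful execution of this bookkeeping that justifies calling the construction an algorithm.
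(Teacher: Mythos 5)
Your approach — starting from a normalized disk-band presentation of the ribbon disk rather than the paper's cut-and-disk-slide procedure — is a plausible alternative strategy, but as written it has several genuine gaps, the most serious of which is the cancellation claim for the handle pairs.

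You attach a $1$-handle "whose core is parallel to $b_{i}$" (so its attaching balls sit near the two endpoints of the band, on $D_{j}$ and $D_{l}$), and a $2$-handle whose attaching circle is a small meridian of the pierced disk $D_{m}$ at the ribbon singularity. You then assert that the attaching circle of the $2$-handle runs geometrically once over the $1$-handle "by construction." But the meridian of $D_{m}$ is a small loop supported near the intersection arc $\alpha_{i}$, which is typically far from both attaching balls, so in the ball-pair picture it does not visibly enter one ball and exit the other. The claim is not self-evident and needs an argument — for instance, a careful choice of the dotted-circle representative of the $1$-handle passing once through $D_{m}$, or a relocation of the attaching balls to either side of $D_{m}$ at $\alpha_{i}$ (in which case the $1$-handle is no longer parallel to $b_{i}$, and the description "tubing $D_{j}$ to $D_{l}$ through the $4$-ball" no longer matches). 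As written, the two descriptions of the $1$-handle and of the cancellation are in tension.

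A second gap is the leaf absorption. You set aside the leaf $(D_{n},b_{n})$ and adjoin $b_{n}\cup D_{n}$ directly to the tubed disk to form $U$. But $b_{n}$ still pierces some $D_{m}$ with $m<n$, and that disk is part of your composite disk, so the composite disk you describe still has a ribbon singularity and is not obviously embedded in the boundary of the $1$-sub-handlebody; your justification for $U$ being unknotted (that the composite disk is embedded in the simply connected $4$-manifold) addresses sliceness, not unknottedness in $\#_{n-1}\left(S^{1}\times S^{2}\right)$. The paper handles exactly this issue with disk slides (Lemma~\ref{removing a singularity}) to remove singularities rather than merely hiding them in the $4$-ball, and with the tree argument to pull $K_{cut}$ free of the $1$-handles. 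Finally, you wave at the framings ("prescribed by the planar embedding") and explicitly defer the verification that attaching the $2$-handles recovers $K$; the paper devotes Figures~\ref{movie}--\ref{2-handle adjustment} and a parity adjustment of the $2$-handle framings precisely to this point, and that bookkeeping is where the content of the theorem lies. So while your disk-band route could likely be made to work, the proposal as stated leaves its central assertions unverified.
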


A representation of a ribbon knot $K$ as in Theorem~\ref{unknot theorem }
will be called a \textit{handlebody picture for $K$}. The proof of
Theorem~\ref{unknot theorem } will be given in Section~$3$. Figure~\ref{example repeated}
gives an example ribbon knot and its image after running the algorithm. 
\begin{thm}
Given an arbitrary ribbon knot $K$, we can associate to it a handlebody
picture. If it is possible to Legendrian realize the attaching circles
of the 2-handles so that the handle attachments are Stein (i.e. framings
are all $tb-1$), and also Legendrian realize $K$ so that $tb\left(K\right)=-1$,
then $K$ is a Legendrian ribbon knot that bounds a Lagrangian disk
in $\left(B^{4},\omega_{Std}\right)$. 
\end{thm}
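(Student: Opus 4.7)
The plan is to construct the Lagrangian disk for $K$ in $(B^{4},\omega_{Std})$ by assembling pieces naturally present in the Stein handlebody picture. Roughly, the handlebody picture expresses $K$ as an unknot $U$ sitting in the boundary of a subcritical Weinstein sub-handlebody $W_{1}$, and the Stein 2-handles turn this into a Stein filling of $(S^{3},\xi_{std})$; I would build the Lagrangian disk from a standard disk for $U$ and extend it through the handle attachments.

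First, I would use the hypotheses to endow $B^{4}$ with a Weinstein structure compatible with the handlebody picture. The 1-subhandlebody $W_{1}=\natural^{k}(S^{1}\times D^{3})$ carries its canonical subcritical Weinstein structure with boundary the standard tight $(\#^{k}(S^{1}\times S^{2}),\xi_{0})$. The hypothesis that the attaching circles of the 2-handles can be Legendrian realized with framing $tb-1$ means the 2-handle attachments are Weinstein (Stein) in the sense of Eliashberg--Weinstein, producing a Stein 4-manifold $W$ that is diffeomorphic to $B^{4}$. By Eliashberg's uniqueness theorem for Stein structures on $B^{4}$, $W$ is symplectically deformation-equivalent to $(B^{4},\omega_{Std})$.

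Next, I would produce a Lagrangian disk $D_{U}\subset W_{1}$ with $\partial D_{U}=U$. Since $U$ is a topological unknot in $\#^{k}(S^{1}\times S^{2})$, after Legendrian isotopy it can be assumed to lie in a Darboux ball $B^{3}\subset\partial W_{1}$ as the unique Legendrian unknot with $tb=-1$ and $rot=0$. A Darboux $B^{4}\subset W_{1}$ extending this $B^{3}$ contains a standard Lagrangian disk bounded by this unknot, which yields the desired $D_{U}$. Then, since $U$ is disjoint from the 2-handle attaching regions, the trivial Legendrian isotopy of $U$ in the Weinstein cobordism $W\setminus W_{1}$ traces out a Lagrangian cylinder (Chantraine's construction) whose top boundary is a Legendrian knot $U'\subset(S^{3},\xi_{std})$. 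By Theorem~\ref{unknot theorem }, $U'$ is topologically isotopic to $K$; matching $tb(U')=-1=tb(K)$ (and arranging rotation numbers to agree) lets me identify $U'$ with the prescribed Legendrian $K$ up to Legendrian isotopy. Gluing $D_{U}$ to this Lagrangian cylinder then produces the Lagrangian disk for $K$ in $(B^{4},\omega_{Std})$.

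The main obstacle will be the final matching: ensuring that the Legendrian knot $U'$ emerging at the top of the cobordism coincides, as a Legendrian, with the prescribed $K$ rather than merely being topologically isotopic to it. This demands careful tracking of the classical invariants ($tb$ and $rot$) through each handle attachment, and perhaps adjusting the Legendrian realization of $U$ in $\partial W_{1}$ to compensate. A secondary technical point is invoking Eliashberg's uniqueness theorem to identify the constructed Stein $B^{4}$ with the standard symplectic ball, so that the output is a Lagrangian disk in $(B^{4},\omega_{Std})$ and not merely in some exotic Stein filling.
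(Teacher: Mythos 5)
Your proposal is essentially the paper's own argument: the Legendrian unknot $U$ with $tb\left(U\right)=-1$ bounds a Lagrangian disk in the 1-sub-handlebody, and since the Stein 2-handles are attached disjointly from this disk, it persists into the resulting Stein domain, which is the standard $\left(B^{4},\omega_{Std}\right)$. The ``matching'' concern you flag as the main obstacle is moot, since the Legendrian $K$ in the conclusion is by definition the image of the Legendrian-realized $U$ after the (canceling) handle attachments, not an independently prescribed representative.
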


\begin{figure}

\begin{overpic}[unit=1mm,scale=1]{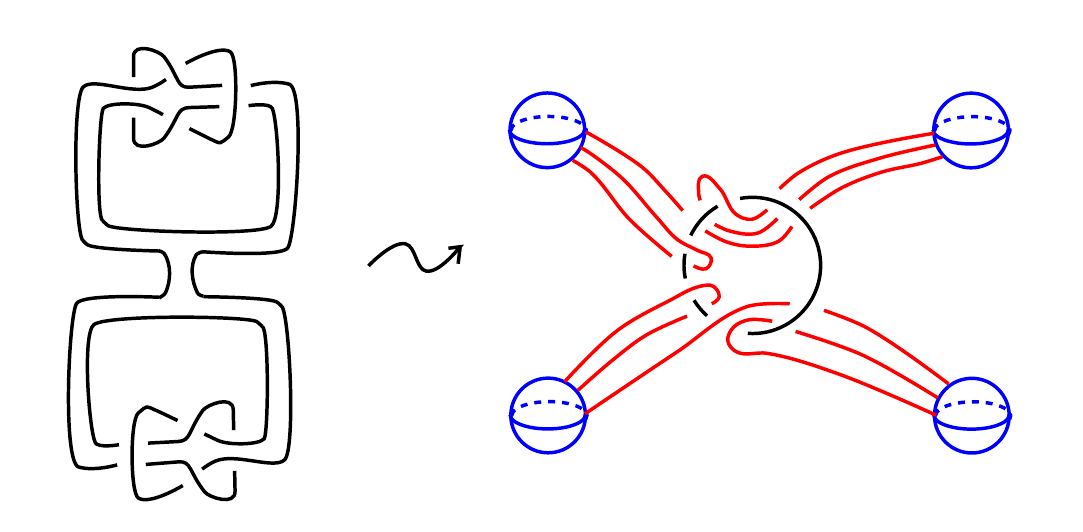}
\put(85,25){\colorbox{white}{\parbox{0.3cm}{%
    $K$}}}
\put(0,30){\colorbox{white}{\parbox{0.3cm}{%
    $K$}}}
\end{overpic}
\caption{An example ribbon knot before running the algorithm in Theorem~\ref{unknot theorem } (left), and after
running the algorithm (right).}
\label{example repeated}

\end{figure}

\begin{proof}
Given a handlebody picture for $K$, there is an unknot $U$ in the
boundary of the 1-sub-handlebody which, by hypothesis, can be realized
with $tb\left(U\right)=-1$. Such an unknot bounds a Lagrangian disk
in the 1-sub-handlebody. Since the 2-handles are attached disjointly
from this disk, $K$ bounds a Lagrangian disk after they are attached,
that is, $K$ bounds a Lagrangian disk in $\left(B^{4},\omega_{Std}\right)$. 
\end{proof}
Conway-Etnyre-Tosun \cite{key-11} make use of this fact to describe
when contact surgery on a knot in $\left(S^{3},\xi_{Std}\right)$
preserves symplectic fillability. 

\begin{figure}

\begin{overpic}[unit=1mm,scale=1]{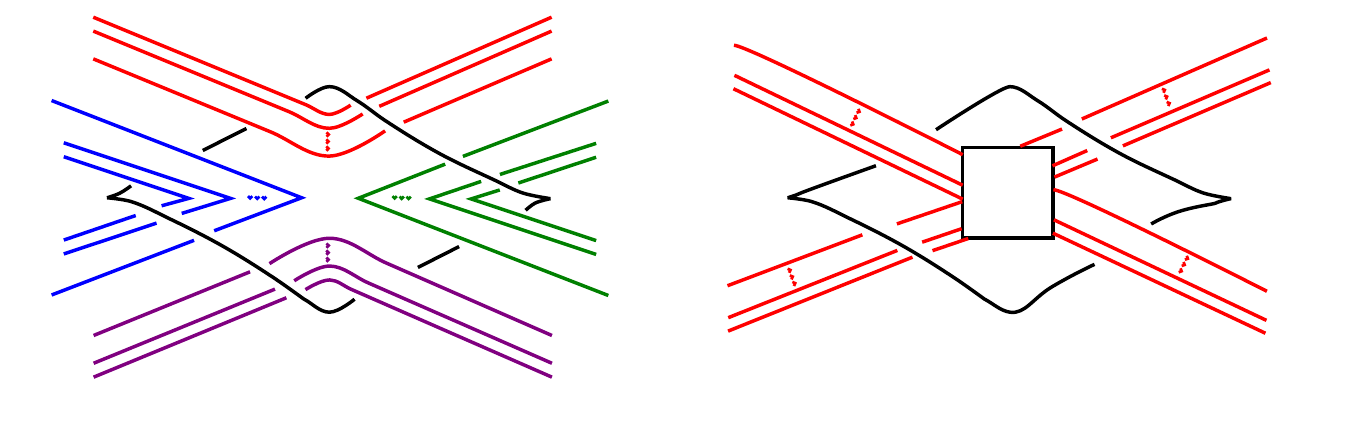}
\put(37,40){\colorbox{white}{\parbox{0.3cm}{%
    \color{myred}$\mathcal{N}$}}}
\put(37,8){\colorbox{white}{\parbox{0.3cm}{%
    \color{mypurple}$\mathcal{S}$}}}
\put(62,28){\colorbox{white}{\parbox{0.3cm}{%
    \color{mygreen}$\mathcal{E}$}}}
\put(-2,28){\colorbox{white}{\parbox{0.3cm}{%
    \color{myblue}$\mathcal{W}$}}}
\put(131,38){\colorbox{white}{\parbox{0.3cm}{%
    \color{myred}$\mathcal{NE}$}}}
\put(63,38){\colorbox{white}{\parbox{0.3cm}{%
    \color{myred}$\mathcal{NW}$}}}
\put(63,12){\colorbox{white}{\parbox{0.3cm}{%
    \color{myred}$\mathcal{SW}$}}}
\put(131,12){\colorbox{white}{\parbox{0.3cm}{%
    \color{myred}$\mathcal{SE}$}}}
\put(100,24){\colorbox{white}{\parbox{0.3cm}{%
    $\mathcal{T}$}}}
\put(30,3){\colorbox{white}{\parbox{0.3cm}{%
    $(a)$}}}
\put(99,3){\colorbox{white}{\parbox{0.3cm}{%
    $(b)$}}}
\end{overpic}
\caption{Possible examples of knots with Legendrian large cables. The ellipses
are meant to indicate a finite number of strands bundled as shown,
while $\mathcal{T}$ is an arbitrary Legendrian tangle. }
\label{generalized Yasui examples}

\end{figure}

\begin{cor}
Given an arbitrary ribbon knot $K$, we can associate to it a handlebody
picture. If it is possible to Legendrian realize the attaching circles
of the 2-handles so that the handle attachments are Stein, Legendrian
realize $K$ so that $tb\left(K\right)=-1$, and also arrange the
local picture of $K$ to be as in Figure~\ref{generalized Yasui examples}~(a),
then $K$ has Legendrian large cables.
\label{Yasui examples corollary}
\end{cor}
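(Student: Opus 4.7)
The plan is to follow the strategy used implicitly by Yasui for the $K^m$ family, extracting from the local picture an explicit Legendrian cable whose Thurston--Bennequin invariant exceeds $pq$.

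First, applying the preceding theorem to the given handlebody picture yields a Legendrian realization of $K$ with $tb(K) = -1$ bounding a Lagrangian disk in $(B^4, \omega_{Std})$, sitting inside a Stein structure built from the prescribed Stein $2$-handle attachments. Since the $2$-handles are attached away from the local region shown in Figure~\ref{generalized Yasui examples}~(a), all of the cabling geometry can be carried out inside a fixed contact neighborhood of that region while leaving the ambient Stein structure untouched.

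Second, inside that local region I would replace $K$ by $n$ Legendrian parallel pushoffs of $K$ (in the contact framing), arranged so that they traverse the bundles $\mathcal{N}$ and $\mathcal{S}$ in the obvious way and are reconnected through $\mathcal{E}$ and $\mathcal{W}$ to give a single connected Legendrian knot $L$. A direct strand-counting argument, modeled on Yasui's construction of $K^m_{n,-1}$, identifies $L$ topologically as the cable $K_{n,-1}$. The contact framing on each pushoff contributes $tb(K) = -1$, and the key feature of configuration (a) is that the reconnection through $\mathcal{E}$ and $\mathcal{W}$ adds no additional negative twisting; a framing computation along these lines gives $tb(L) = -1$.

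Third, since $pq = -n$, we obtain $tb(L) = -1 > -n = pq$ for every $n \geq 2$, so $K_{n,-1}$ is Legendrian large and hence $K$ has the LLC property. The main obstacle is the framing calculation: I need to check that the Legendrian cable produced from the bundled region really has $tb = -1$ (equivalently, that no stabilizations are forced by routing through the four bundles) and that the topological cable slope produced by the reconnection is $-1$ rather than something with $|q|$ larger. This is precisely the point at which the symmetric bundled configuration in Figure~\ref{generalized Yasui examples}~(a) is used, and it is the direct generalization of Yasui's explicit computation for $K^m$.
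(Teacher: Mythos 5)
Your overall instinct is right---the corollary is proved by running Yasui's explicit cabling construction locally---but your write-up leaves the essential step unproved and, as stated, it would not go through in the generality of Figure~\ref{generalized Yasui examples}~(a). You concede at the end that ``the main obstacle is the framing calculation,'' namely that the Legendrian cable you build while routing through all four bundles $\mathcal{N}$, $\mathcal{S}$, $\mathcal{E}$, $\mathcal{W}$ really has $tb=-1$ with no forced stabilizations. That calculation is precisely the content of the corollary, and Yasui's argument does not directly cover a mixed configuration: his computation is carried out for a specific local picture in which the strands all pass through the region in one way (in the paper's language, only strands of type $\mathcal{N}$). Your description of the construction also suggests a misreading of the figure: the bundles $\mathcal{N}$, $\mathcal{S}$, $\mathcal{E}$, $\mathcal{W}$ are portions of $K$ itself passing through the local region, not channels through which the $n$ parallel pushoffs get ``reconnected''; the pushoffs follow all of $K$, and each bundle simply becomes an $n$-fold bundle, so there is no separate ``reconnection through $\mathcal{E}$ and $\mathcal{W}$'' whose framing contribution you could isolate in the way you propose.

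The missing idea, which is how the paper closes exactly this gap, is a reduction step rather than a new computation: since Yasui's proof is entirely local, it applies verbatim whenever \emph{all} strands in the local region are of type $\mathcal{N}$, and one then shows that the general configuration of Figure~\ref{generalized Yasui examples}~(a) can be brought to that special form by Legendrian isotopy. Concretely, strands of type $\mathcal{S}$ are converted into strands of type $\mathcal{N}$ by the explicit Legendrian isotopy of Figure~\ref{Legend isotopy 1}, and strands of type $\mathcal{E}$ and $\mathcal{W}$ by even simpler isotopies; after this normalization Yasui's framing computation (which you were hoping to redo by hand in the mixed setting) is simply quoted, yielding the large Legendrian representative of the cable and hence the LLC property. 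Without either carrying out your mixed-configuration framing computation in detail or supplying this isotopy reduction, the proof is incomplete at its central point.
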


\begin{proof}
The proof is exactly the same as the proof of Yasui's Theorem~1.3
(\cite{key-6}, pp 7-13), when there are only strands of type $\mathcal{N}$,
since everything in the arguments can be done locally. The rest of
the cases follow by Legendrian isotopy of Figure~\ref{generalized Yasui examples}~(a).
For example, we can change all strands of type $\mathcal{S}$ into
strands of type $\mathcal{N}$ by the Legendrian isotopy shown in
Figure~\ref{Legend isotopy 1}. We can also change all strands of types $\mathcal{E}$
and $\mathcal{W}$ into strands of type $\mathcal{N}$ by even easier
isotopies.
\end{proof}

\begin{figure}
\begin{overpic}[unit=1mm,scale=1]{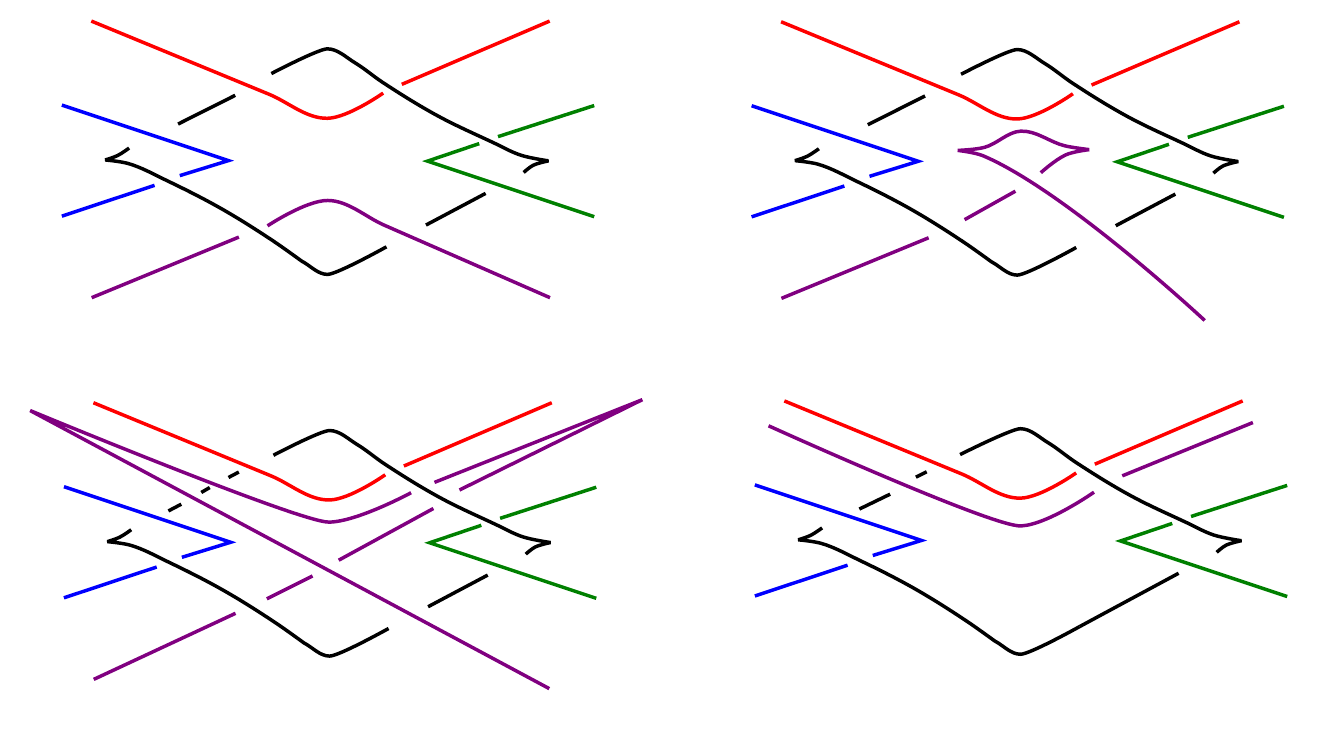}
\put(30,40){\colorbox{white}{\parbox{0.3cm}{%
    $(a)$}}}
\put(99,40){\colorbox{white}{\parbox{0.3cm}{%
    $(b)$}}}
\put(30,0){\colorbox{white}{\parbox{0.3cm}{%
    $(c)$}}}
\put(99,0){\colorbox{white}{\parbox{0.3cm}{%
    $(d)$}}}
\end{overpic}
\caption{Shows the steps in a Legendrian isotopy to change strands of type
$\mathcal{S}$ into strands of type $\mathcal{N}$.}
\label{Legend isotopy 1}
\end{figure}

\begin{rem}
If the framings of the 2-handles allow stabilizations, then there
are more examples. Given an arbitrary ribbon knot $K$, we can associate
to it a handlebody picture. If it is possible to Legendrian realize
the attaching circles of the 2-handles so that the handle attachments
are Stein, Legendrian realize $K$ so that $tb\left(K\right)=-1$,
arrange the local picture of $K$ to be as in Figure~\ref{generalized Yasui examples}~(a),
and arrange that there is a stabilization on each of the strands of
at least one group of strands $\mathcal{NE}$, $\mathcal{NW}$, $\mathcal{SE}$,
or $\mathcal{SW}$, then $K$ has Legendrian large cables. This is
true since we can isotop the stabilizations to have the form of Figure~\ref{Legend isotopy 2}~(a),
Legendrian isotop the tangle $\mathcal{T}$ off to the side as shown
in Figure~\ref{Legend isotopy 2}~(b), and then apply Corollary~\ref{Yasui examples corollary}.
\end{rem}

\begin{figure}
\begin{overpic}[unit=1mm,scale=1]{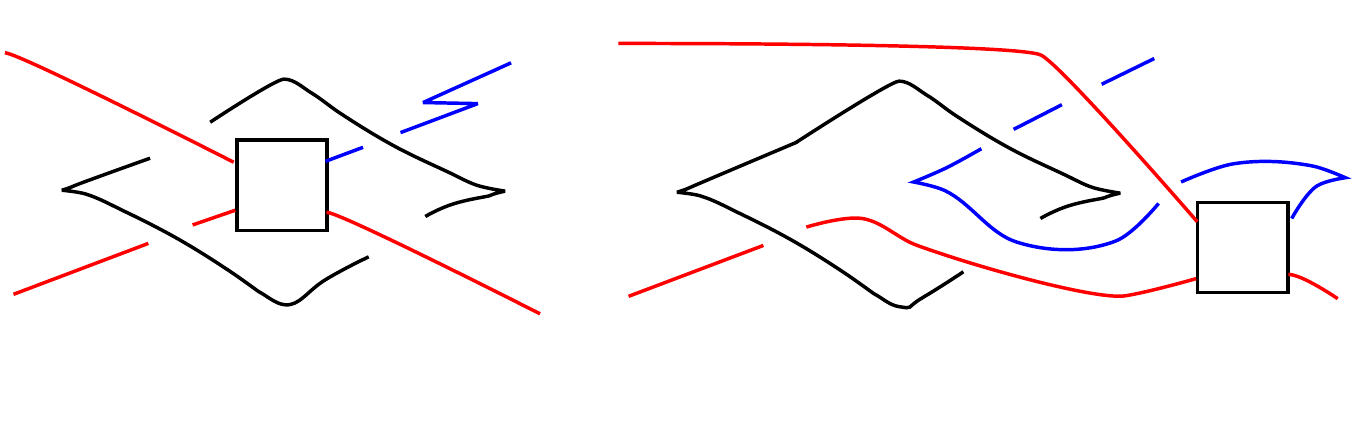}
\put(26,5){\colorbox{white}{\parbox{0.3cm}{%
    $(a)$}}}
\put(89,5){\colorbox{white}{\parbox{0.3cm}{%
    $(b)$}}}
\put(26,26){\colorbox{white}{\parbox{0.3cm}{%
    $\mathcal{T}$}}}
\put(124,19){\colorbox{white}{\parbox{0.3cm}{%
    $\mathcal{T}$}}}
\end{overpic}
\caption{Shows a Legendrian isotopy of the tangle $\mathcal{T}$. In this example,
strands of type $\mathcal{NE}$ are assumed to have stabilizations.}
\label{Legend isotopy 2}
\end{figure}

\subsubsection*{Acknowledgments: The author would like to express profound gratitude
to his advisor John B. Etnyre for his patience, encouragement, and
many helpful comments and suggestions, without which this paper would
not have been possible. He would also like to extend thanks to James Conway for making him aware of Conjecture~\ref{conjecture}, and to Sudipta Kolay, Hyunki Min, Surena Hozoori, and Peter Lambert Cole for many useful and productive conversations.  }

\section{Background}

We will assume that the reader is familiar with Legendrian knots and
basic convex surface theory. Some excellent sources for this material
are \cite{key-5,key-8,key-7,key-2}. We will need to understand the
twisting of a contact structure along a Legendrian curve with respect
to two different framings. Suppose we are given a solid torus $S\subset\left(S^{3},\xi\right)$
with convex boundary which represents the knot $K$. This just means
that $S=D^{2}\times S^{1}$ and $K=\left\{ pt\right\} \times S^{1}$ for some point in $int\left(D^{2}\right)$. Further suppose that we
are given a Legendrian $\left(p,q\right)$ curve $L$ in $S$. Since
$L$ is null-homologous in $S^{3}$, there is a well defined framing
on $L$ given by any Seifert surface $\Sigma$, and measuring the
twisting of $\xi$ along $L$ with respect to this framing gives us
$tw\left(L;\Sigma\right)=tb\left(L\right)$, that is, the Thurston-Bennequin
invariant of $L$. We can also find a boundary parallel torus $T^{2}\subset S$
containing $L$, and measure the twisting of $\xi$ along $L$ with
respect to the framing coming from $T^{2}$. We will denote this twisting
by $tw\left(L;\partial S\right)$. The relationship between these
twistings is given by the expression \cite{key-1}
\[
tw\left(L;\partial S\right)+pq=tb\left(L\right).
\]

Consider a contact structure $\xi$ on $T^{2}\times I$ with convex boundary, let $T_{1},T_{2}$
be its two torus boundary components, and assume without loss of generality
that $s_{1}=slope\left(\varGamma_{T_{1}}\right)\leq slope\left(\varGamma_{T_{2}}\right)=s_{2}$, where $\varGamma_{S}$ denotes the dividing curves on a convex surface $S$.
Then we will say that $\xi$
is \textit{minimally twisting} if every convex, boundary parallel
torus $S\subset T^{2}\times I$ has $s_{1}\leq slope\left(\varGamma_{S}\right)\leq s_{2}$.
This is the same notion of minimal twisting that Honda defined in
\cite{key-5}. We will also need to make use of his basic slices to
decompose $T^{2}\times I$ into layers. Using the same notation as
above, we will call $\left(T^{2}\times I,\xi\right)$ a \textit{basic
slice} if
\begin{enumerate}
\item $\xi$ is tight, and minimally twisting,
\item $T_{i}$ are convex, and $\#\varGamma_{T_{i}}=2$,
\item $s_{i}$ form an integral basis for $\mathbb{Z}^{2}$.
\end{enumerate}
Honda showed that, up to isotopy fixing the boundary, there are exactly
two tight contact structures on a basic slice, distinguished by their
relative Euler classes in $H^{2}\left(T^{2}\times I,\partial\left(T^{2}\times I\right);\mathbb{Z}\right)$.

The Farey tessellation, Figure~\ref{Farey Tessellation}, gives a
convenient way to describe curves on $T^{2}$.
\begin{figure}
\begin{overpic}[unit=1mm,scale=1]{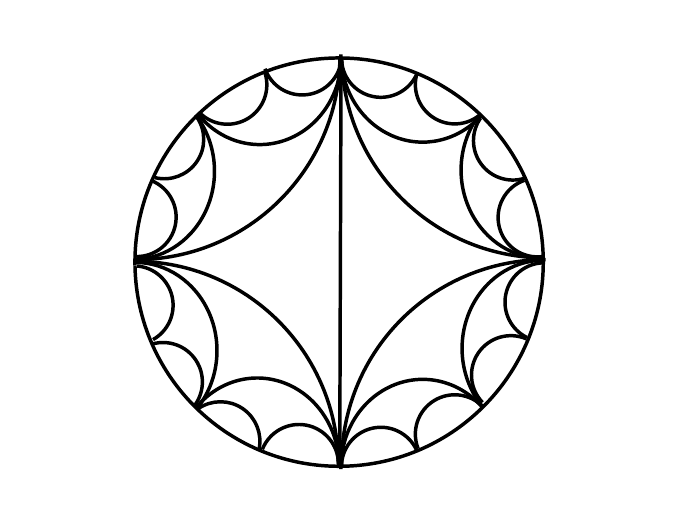}
\put(6,26){\colorbox{white}{\parbox{0.3cm}{%
    \footnotesize$-1$}}}
\put(57,26){\colorbox{white}{\parbox{0.3cm}{%
    \footnotesize$1$}}}
\put(33,51){\colorbox{white}{\parbox{0.3cm}{%
    \footnotesize$0$}}}
\put(32,2){\colorbox{white}{\parbox{0.3cm}{%
    \footnotesize$\infty$}}}
\put(21,49){\colorbox{white}{\parbox{0.3cm}{%
    \footnotesize$\nicefrac{-1}{3}$}}}
\put(21,4){\colorbox{white}{\parbox{0.3cm}{%
    \footnotesize$-3$}}}
\put(13,44){\colorbox{white}{\parbox{0.3cm}{%
    \footnotesize$\nicefrac{-1}{2}$}}}
\put(13,9){\colorbox{white}{\parbox{0.3cm}{%
    \footnotesize$-2$}}}
\put(7,36){\colorbox{white}{\parbox{0.3cm}{%
    \footnotesize$\nicefrac{-2}{3}$}}}
\put(7,17){\colorbox{white}{\parbox{0.3cm}{%
    \footnotesize$\nicefrac{-3}{2}$}}}
\put(41,49){\colorbox{white}{\parbox{0.3cm}{%
    \footnotesize$\nicefrac{1}{3}$}}}
\put(41,4){\colorbox{white}{\parbox{0.3cm}{%
    \footnotesize$3$}}}
\put(49,44){\colorbox{white}{\parbox{0.3cm}{%
    \footnotesize$\nicefrac{1}{2}$}}}
\put(49,9){\colorbox{white}{\parbox{0.3cm}{%
    \footnotesize$2$}}}
\put(55,36){\colorbox{white}{\parbox{0.3cm}{%
    \footnotesize$\nicefrac{2}{3}$}}}
\put(55,17){\colorbox{white}{\parbox{0.3cm}{%
    \footnotesize$\nicefrac{3}{2}$}}}
\end{overpic}
\caption{Farey Tessellation}
\label{Farey Tessellation}
\end{figure}
To construct the eastern half of the Farey tessellation, first label
the north pole by $0=\frac{0}{1}$, the south pole by $\infty=\frac{1}{0}$,
and connect them by an edge (by edge, we mean a hyperbolic geodesic). Next, label the eastern most point that
is midway between $0$ and $\infty$ by $1=\frac{1}{1}$, as shown
in Figure~\ref{Farey Tessellation}. Connect $1$ by edges to $0$
and $\infty$. For rational numbers on the tessellation with the same sign, we can define
an addition on the Farey tessellation by $\frac{a}{b}+\frac{c}{d}=\frac{a+c}{b+d}$,
locate $\frac{a+c}{b+d}$ midway between $\frac{a}{b}$ and $\frac{c}{d}$,
and connect $\frac{a+c}{b+d}$ by edges with $\frac{a}{b}$ and $\frac{c}{d}$
respectively. Thus we can fill in the rest of the positive side of
the Farey tessellation by iterating this addition. Notice that, if
$\frac{a}{b},\frac{c}{d}$ are assumed to be an integral basis for
$\mathbb{Z}^{2}$, then both $\begin{vmatrix}a & a+c\\
b & b+d
\end{vmatrix}=ad-bc=\begin{vmatrix}a & c\\
b & d
\end{vmatrix}=\pm1$ and similarly, $\begin{vmatrix}a+c & c\\
b+d & d
\end{vmatrix}=\pm1$, so any two points connected by an edge are an integral basis for
$\mathbb{Z}^{2}$. Also notice that, given two positive rational numbers
$\frac{a}{b}>\frac{c}{d}$, there are exactly two other points with
edges to both $\frac{a}{b}$ and $\frac{c}{d}$, namely $\frac{a+c}{b+d}$
and $\frac{a-c}{b-d}$. 

To construct the western (negative) half of the Farey tessellation,
first relabel the north pole by $0=\frac{0}{-1}$. Next, label the
western most point that is midway between $0$ and $\infty$ by $-1=\frac{1}{-1}$,
as shown in Figure~\ref{Farey Tessellation}. Connect $-1$ by edges
to $0$ and $\infty$. Now using the same addition we defined above,
we can iteratively build up the negative side of our Farey tessellation.
Notice that the only point which was labeled twice was the north
pole, which is now given by $\frac{0}{\pm1}$. 

For any two points $p_{1}$ and $p_{2}$ on the Farey tessellation,
we define the interval $\left[p_{1},p_{2}\right]$ to be the set of
all points encountered starting from $p_{1}$ and moving clockwise
around the tessellation until reaching $p_{2}$. Given a clockwise
sequence of three points connected by edges $p_{1}$, $p_{2}$ and
$p_{3}$ on the Farey tessellation, we say that a jump from $p_{2}$
to $p_{3}$ is \textit{half maximal} if $p_{3}$ is the half way point
of the maximum possible clockwise jump one could make in the interval
$\left(p_{2},p_{1}\right)$. We will consider only clockwise
paths in the Farey tessellation, where a path is a sequence of jumps
along edges. We call a path between two points $s_{1},s_{2}\in\mathbb{Q}$
a \textit{continued fraction block} if, after the first jump, every
jump is half maximal. Notice that, by construction, a path that is
a continued fraction block cannot be shortened. We will also need
to consider decorated paths (i.e. paths for which each jump gets a
``$+$'' or ``$-$''). We can define an equivalence relation ``$\sim$''
on decorated paths in the Farey tessellation which says that any two
paths with the same endpoints and which differ only by shuffling of
signs within continued fraction blocks are in the same class. The following result, due to Honda \cite{key-5}, and in a different terminology Giroux \cite{key-8}, describes a relationship between
contact structures on $T^{2}\times I$ and minimal decorated paths
in the Farey tessellation. Given a manifold $M$ and a multicurve $\varGamma$ in $\partial M$, let 
$Tight\left(M,\varGamma\right)$ denote the set of isotopy classes of tight contact structures on $M$ 
with convex boundary, such that $\varGamma$ is a set of dividing curves for $\partial M$. Similarly, given 
$T^{2}\times I$ with boundary $T_{1}\sqcup T_{2}$, and two multicurves $\varGamma_{i}$ on $T_{i}$, 
let $Tight\left(T^{2}\times I,T_{1}\cup T_{2}\right)$ denote the set of tight, minimally twisting contact structures 
on $T^{2}\times I$ with convex boundary, such that $\varGamma_{i}$ is a set of dividing curves for $T_{i}$.
\begin{thm}
(Honda, \cite{key-5}) Given $T^{2}\times I$ with boundary
$T_{1}\sqcup T_{2}$, and two multicurves $\varGamma_{i}$ on $T_{i}$ with $\#\varGamma_{i}=2$
such that $s_{1}=slope\left(\varGamma_{1}\right)\leq slope\left(\varGamma_{2}\right)=s_{2}$,
then
\[
Tight\left(T^{2}\times I,\varGamma_{1}\cup\varGamma_{2}\right)\nicefrac{\longleftrightarrow\begin{Bmatrix}minimal\;decorated\;paths\\
from\;s_{1}to\;s_{2}
\end{Bmatrix}}{\sim}.
\]
\label{classification of T2xI}
\end{thm}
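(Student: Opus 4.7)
The plan is to follow Honda's original argument in \cite{key-5} (with Giroux's related approach in \cite{key-8}). By applying an element of $SL(2,\mathbb{Z})$ to $T^{2}$ we may assume the boundary slopes $s_{1},s_{2}$ sit in any convenient position on the Farey tessellation, so the content of the theorem is genuinely about the combinatorics of paths between them and does not depend on a choice of framing. The proof has two halves: constructing a tight minimally twisting contact structure from each equivalence class of minimal decorated paths, and showing conversely that every such contact structure arises this way and is determined up to isotopy by its associated equivalence class.

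For the existence direction, given a minimal decorated path $s_{1}=r_{0},r_{1},\dots,r_{n}=s_{2}$ with signs $\epsilon_{i}\in\{+,-\}$, I would partition $T^{2}\times I$ into $n$ sublayers $T^{2}\times[\tfrac{i-1}{n},\tfrac{i}{n}]$ and place on the $i$-th layer a basic slice with boundary slopes $r_{i-1},r_{i}$ and sign $\epsilon_{i}$; this is well-defined because consecutive points on a Farey path form an integral basis. Tightness of the assembled structure follows from Honda's gluing theorem for stacks of basic slices whose consecutive boundary dividing slopes lie on a single Farey edge, and minimal twisting follows because every convex boundary-parallel torus can be perturbed so that its slope lies on the path, hence in $[s_{1},s_{2}]$.

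For the classification direction, given a tight minimally twisting $(T^{2}\times I,\xi)$, I would use Kanda--Giroux Legendrian realization together with the finiteness of possible slopes of convex tori in a tight $T^{2}\times I$ to extract a clockwise sequence of convex tori interpolating between $T_{1}$ and $T_{2}$ whose slopes form a path $s_{1}=r_{0},\dots,r_{n}=s_{2}$ in the Farey tessellation. Each pair $(r_{i-1},r_{i})$ cobounds a basic slice; reading the sign from the relative Euler class in $H^{2}(T^{2}\times I,\partial;\mathbb{Z})$ produces a decorated path. Minimality of the path is forced by minimal twisting: any shortcut would produce a boundary-parallel convex torus with slope outside $[s_{1},s_{2}]$, contradicting the hypothesis. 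The main obstacle in this half is showing the assignment is well-defined and injective up to the shuffling equivalence $\sim$: the key technical input is the \emph{shuffling lemma}, which asserts that two adjacent basic slices whose three boundary slopes lie on a common triangle of the Farey tessellation and which carry the same sign can be interchanged by a contact isotopy rel $\partial$. Iterating this lemma inside a continued fraction block realizes the full $\sim$-equivalence.

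The technical heart of the argument is therefore the combination of (i) the shuffling lemma, which requires a careful bypass-attachment analysis inside a neighborhood of two adjacent basic slices, and (ii) the proof that the relative Euler class is a complete invariant distinguishing inequivalent decorated paths, which amounts to showing that nontrivial sign rearrangements \emph{across} different continued fraction blocks alter the Poincar\'e dual of the dividing set in a detectable way. Once these two ingredients are in place, the bijection between $\mathrm{Tight}(T^{2}\times I,\varGamma_{1}\cup\varGamma_{2})$ and minimal decorated paths mod $\sim$ follows formally from the construction-and-extraction dictionary above; the remaining verifications (Legendrian realization, Giroux flexibility, discreteness of convex slopes) are standard applications of convex surface theory.
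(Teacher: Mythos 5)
First, note that the paper does not prove this statement at all: it is quoted as background from Honda \cite{key-5} (with Giroux \cite{key-8} in different language), so there is no internal proof to compare against. Your outline does follow the broad strategy of Honda's original argument (factor into basic slices, read off signs from relative Euler classes, shuffle within continued fraction blocks), but as written it has a genuine gap in the existence half. You assert that tightness of the stack assembled from a minimal decorated path ``follows from Honda's gluing theorem for stacks of basic slices.'' No such gluing theorem exists: gluing tight basic slices does not in general preserve tightness, and indeed the companion result recorded in this paper (Theorem~\ref{owing to Honda}) says precisely that a non-consistent shortening produces an overtwisted structure, so a stack of tight basic slices can easily be overtwisted. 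Appealing to ``tight because it is a stack of basic slices along a minimal path'' is circular, since that is essentially the statement being proved. In Honda's argument the tightness of the mixed-sign candidates is established by an external input: the candidate structures are realized as sublayers of Stein fillable contact structures obtained by Legendrian surgery (equivalently, inside the known tight structures on lens spaces and solid tori), with Eliashberg's fillability-implies-tightness doing the work, while the all-same-sign paths are handled by embedding into universally tight models. Your sketch needs one of these mechanisms; without it the lower bound on $\left|Tight\left(T^{2}\times I,\varGamma_{1}\cup\varGamma_{2}\right)\right|$ is unproved.

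Two smaller points. Your statement of the shuffling lemma is off: interchanging two adjacent basic slices \emph{of the same sign} is vacuous; the content is that within a continued fraction block a ``$+$'' slice and a ``$-$'' slice can be interchanged, which is what generates the equivalence relation $\sim$. Honda proves this by passing to the associated solid torus (where the block corresponds to choices of stabilization whose order is immaterial), not by a direct rel-boundary isotopy of two slices in isolation. Also, your claim that minimal twisting of the assembled stack is immediate ``because every convex boundary-parallel torus can be perturbed so that its slope lies on the path'' is an assertion of the conclusion rather than an argument; it again follows from the factorization machinery (bypass analysis and the range of slopes realized in each basic slice), which should be cited or proved rather than assumed. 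With the Stein-filling (or universally tight model) input restored and the shuffling lemma stated for mixed signs, your outline matches Honda's proof.
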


Given $T^{2}\times I$ with a two component multicurve on each of its two torus boundary components, and with boundary slopes
$s_{1},s_{2}\in\mathbb{Q}$, then any decorated path starting from
$s_{1}$ and ending at $s_{2}$ describes a contact structure on $T^{2}\times I$.
Each jump in the path describes a basic slice, and therefore has two
possible contact structures distinguished by the relative Euler class.
We then get $T^{2}\times I$ by concatenating together these basic
slices. For more details, see \cite{key-5}. It follows from Theorem~\ref{classification of T2xI}
that within any continued fraction block, shuffling the signs of the
jumps results in isotopic contact structures.

Suppose we have a decorated path which can be shortened, see Figure~\ref{consistent shortening}.
It follows from Honda's gluing theorem that if the two jumps which
are being combined into a single jump have different signs, then the
contact structure on $T^{2}\times I$ described by this path is overtwisted.
If the signs agree, then the contact structure will be tight. For
this reason, we say that a shortening is \textit{consistent} if the
signs of the smaller jumps agree, and make the following theorem owing
to Honda.
\begin{thm}
Given a decorated path in the Farey tessellation from $s_{1}$ to
$s_{2}$, the contact structure on $T^{2}\times I$ with convex boundary
$T_{1}\sqcup T_{2}$, $\#\varGamma_{T_{i}}=2$, and $s_{1}=slope\left(\varGamma_{T_{1}}\right),\;slope\left(\varGamma_{T_{2}}\right)=s_{2}$
described by this path is tight if and only if every shortening is
consistent.\label{owing to Honda}
\end{thm}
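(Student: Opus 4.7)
The plan is to combine Honda's gluing theorem for basic slices with the classification result of Theorem~\ref{classification of T2xI}, treating each direction of the equivalence in turn.

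For the forward direction I would argue by contrapositive. Suppose the decorated path contains an inconsistent shortening $p_1 \to p_2 \to p_3$, so $p_1$ and $p_3$ are joined by an edge in the Farey tessellation and the signs on the two jumps are opposite. The sub-$T^2 \times I$ determined by this subpath is a concatenation of two basic slices glued along a torus of slope $p_2$, with opposite relative Euler classes. Because $p_1, p_2, p_3$ are the vertices of a Farey triangle, this is exactly the configuration Honda analyzes to detect overtwistedness: the two basic slices correspond to bypasses attached on opposite sides whose signs disagree, and the bypass-rotation/triangle lemma produces an overtwisted disk inside the sub-slice. An overtwisted sub-region forces the ambient $T^2 \times I$ to be overtwisted, completing this direction.

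For the backward direction, suppose every shortening is consistent. I would induct on the number of jumps in the path, repeatedly performing consistent shortenings. Decompose the path into maximal continued fraction blocks; by the sign-shuffling remark following Theorem~\ref{classification of T2xI}, within each block the relative Euler class pattern may be freely permuted without changing the isotopy class of the contact structure. At each junction between consecutive blocks an immediate shortening is available (this is exactly the breakdown of the half-maximal condition), and by hypothesis this shortening is consistent; after shuffling inside each adjacent block we may assume the two jumps straddling the junction carry the same sign. Honda's gluing theorem then merges them into a single basic slice of that same sign, producing a strictly shorter decorated path that realizes an isotopic contact structure. One verifies that the new path again has every shortening consistent, then applies the inductive hypothesis. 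The base case is a single continued fraction block, where Honda's classification of tight contact structures on a solid torus shows that every sign pattern yields a tight structure, and any two such are isotopic.

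The main obstacle is the combinatorial bookkeeping in the inductive step: checking that the condition ``every shortening is consistent'' is preserved under the shortening operation, and tracking how the decomposition into continued fraction blocks changes when two adjacent basic slices are merged. Both reductions come down to the local geometry of the Farey tessellation near a Farey triangle $p_1, p_2, p_3$: any new shortening in the reduced path corresponds to an iterated shortening in the original path, and consistency propagates because the common sign produced by one shortening is the only sign compatible with further hypothetical shortenings. Once this bookkeeping is in place, the induction closes and both directions of the equivalence are established.
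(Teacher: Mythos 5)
The paper does not prove this theorem: it is stated as a consequence of Honda's gluing theorem, with the preceding two-sentence paragraph serving as the only justification, so your proposal necessarily supplies more detail than the paper does. Your forward direction faithfully expands the paper's sketch and is correct: an inconsistent shortening isolates two oppositely signed basic slices whose slopes span a Farey triangle, Honda's gluing theorem says their concatenation is overtwisted, and an overtwisted sub-slice makes the ambient $T^2\times I$ overtwisted.

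The ``combinatorial bookkeeping'' you flag in the backward direction is more than bookkeeping; it points at the fact that the theorem would be \emph{false} if ``every shortening'' meant only one-step shortenings of the given path. Consider the clockwise path $-2 \to -\tfrac{5}{3} \to -\tfrac{3}{2} \to -1$ with decorations $(+,+,-)$. Its only available one-step shortening merges the first two jumps (since $-2$ and $-\tfrac{3}{2}$ span a Farey edge, while $-\tfrac{5}{3}$ and $-1$ do not), and that merge is consistent; yet the contact structure is overtwisted, because the shortened path $-2 \to -\tfrac{3}{2} \to -1$ with decorations $(+,-)$ admits a now-inconsistent shortening to the minimal path $-2 \to -1$. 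So the statement must be read with ``every shortening'' meaning every \emph{iterated} shortening, i.e.\ every sequence of merges down to a minimal path consists of consistent merges. Under that reading the preservation concern in your induction evaporates: any shortening of the reduced path is, after composing with the merge already performed, an iterated shortening of the original, hence consistent by assumption, and the base case is just Theorem~\ref{classification of T2xI} applied to the resulting minimal decorated path. Separately, your localization of shortenings to ``junctions between continued fraction blocks'' is imprecise: a shortening exists at position $i$ exactly when $a_{i-1}$ and $a_{i+1}$ span a Farey edge, which is neither implied by nor equivalent to a failure of half-maximality, and the example above shows a merge available inside a mediant chain with none available at the non-half-maximal jump. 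It is cleaner to drop the block decomposition here and argue directly by reducing to a minimal path.
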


To classify the tight contact structures on solid tori, we will consider
a slightly different type of path. Let a \textit{truncated path} be
a decorated path, as defined above, with the sign of the first jump
omitted from consideration. In other words, the first jump is not
decorated. Suppose we have $S^{1}\times D^{2}$ with a two component multicurve on its torus boundary, and with boundary
slope $s_{2}\in\mathbb{Q}$. If the meridian of $\partial\left(S^{1}\times D^{2}\right)$
has slope $s_{1}\in\mathbb{Q}$, then we have the following classification. 
Given $S^{1}\times D^{2}$ with boundary $T$, and a multicurve $\varGamma$
on $T$, let $Tight\left(S^{1}\times D^{2},\varGamma\right)$ denote
the set of isotopy classes of tight, minimally twisting contact structures
on $S^{1}\times D^{2}$ with convex boundary, such that $\varGamma$
is a set of dividing curves for $T$.

\begin{figure}

\begin{overpic}[unit=1mm,scale=1]{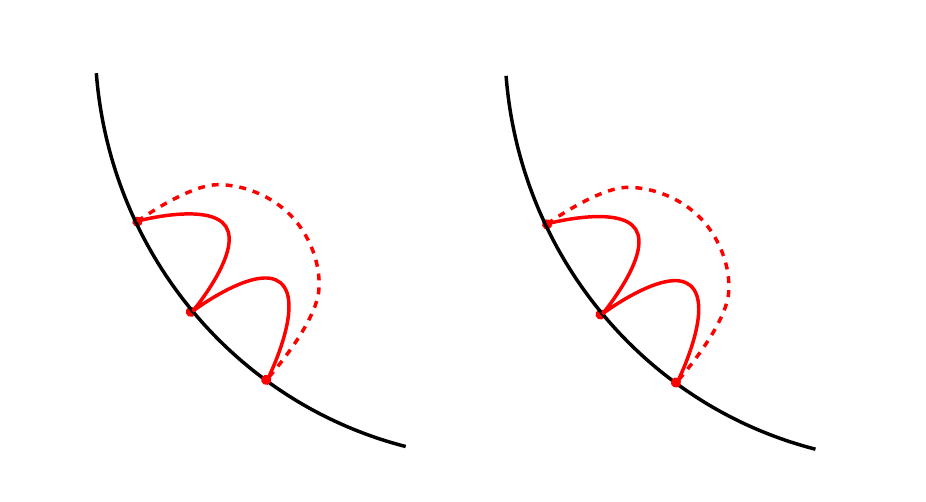}
\put(24,27){\colorbox{white}{\parbox{0.1cm}{%
    \color{myred}\tiny$+$}}}
\put(30,31){\colorbox{white}{\parbox{0.1cm}{%
    \color{myred}\tiny$+$}}}
\put(27,24){\colorbox{white}{\parbox{0.1cm}{%
    \color{myred}\tiny$+$}}}
\put(65,27){\colorbox{white}{\parbox{0.1cm}{%
    \color{myred}\tiny$+$}}}
\put(69,24){\colorbox{white}{\parbox{0.1cm}{%
    \color{myred}\tiny$-$}}}
\end{overpic}
\caption{On the left, a consistent shortening, while on the right a shortening
which is not consistent. }
\label{consistent shortening}

\end{figure}

\begin{thm}
(Honda, \cite{key-5}) Given $S^{1}\times D^{2}$with boundary $T$,
and a multicurve $\varGamma$ on $T$ with $\#\varGamma=2$ such that
$s_{2}=slope\left(\varGamma\right)$, and $s_{1}=slope\left(\mu\right)$,
where $\mu$ is a meridional curve for $T$, then 
\[
Tight\left(S^{1}\times D^{2},\varGamma\right)\nicefrac{\longleftrightarrow\begin{Bmatrix}minimal\;truncated\;paths\\
from\;s_{1}\;to\;s_{2}
\end{Bmatrix}}{\sim}.
\]
\end{thm}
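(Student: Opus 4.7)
The plan is to reduce the classification to the $T^{2}\times I$ case already given by Theorem~\ref{classification of T2xI}, by peeling off a standard neighborhood of a Legendrian core. Throughout, I will write $s_{1}$ for the meridional slope and $s_{2}$ for the boundary slope of the convex boundary torus $T$.

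\textbf{Step 1 (Existence of a convex torus with a controlled intermediate slope).} Given a tight, minimally twisting contact structure $\xi$ on $S^{1}\times D^{2}$ with convex boundary of slope $s_{2}$ and two dividing curves, I first isotope the core to a Legendrian knot $L$ and take a standard neighborhood $N_{L}$ with convex boundary of some integer slope $n$, which is adjacent to $s_{1}$ in the Farey tessellation. Choose any minimal truncated path $s_{1},r_{1},r_{2},\ldots,r_{k}=s_{2}$ that I wish to realize. Using Honda's thickening/thinning lemmas (stated in \cite{key-5}), I can adjust $N_{L}$ so that its convex boundary $T'$ has slope $r_{1}$; minimal twisting of $\xi$ on the region $T^{2}\times I$ between $T'$ and $T$ is automatic since $\xi$ itself is minimally twisting and $r_{1}\in[s_{1},s_{2}]$ in the Farey sense.

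\textbf{Step 2 (Decomposition and reassembly).} After Step 1, $(S^{1}\times D^{2},\xi)$ is split along $T'$ into a solid-torus piece $B$ inside $T'$ and an annular piece $A\cong T^{2}\times I$ between $T'$ and $T$. Since the meridian $s_{1}$ and $r_{1}$ form an integral basis, $B$ is a solid torus with boundary slope adjacent to its meridian, and hence carries a unique tight contact structure, namely a standard neighborhood of a Legendrian core. The annular piece $A$ is classified by Theorem~\ref{classification of T2xI} via decorated paths from $r_{1}$ to $s_{2}$. Concatenating the undecorated jump $s_{1}\to r_{1}$ with such a decorated path yields a minimal truncated path, and every truncated path arises this way; this establishes a well-defined map from isotopy classes of tight, minimally twisting structures on $(S^{1}\times D^{2},\varGamma)$ to truncated paths modulo $\sim$.

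\textbf{Step 3 (Well-definedness and injectivity: the main obstacle).} The delicate point is showing that (a) the choice of $T'$ does not matter, and (b) two truncated paths differing only by sign-shuffling within continued fraction blocks yield isotopic contact structures, while no other identifications occur. Within any continued fraction block that does not touch $s_{1}$, this is inherited directly from Theorem~\ref{classification of T2xI}. The new ingredient is the first continued fraction block: shuffling signs here is equivalent to being allowed to ignore the sign of the very first jump, because one can digest a basic slice adjacent to the inner solid-torus piece into $B$. Indeed, if I enlarge $T'$ across one basic slice, the new inner piece is again a standard neighborhood of a Legendrian knot, and since standard neighborhoods carry a unique tight contact structure, the two possible signs of that absorbed basic slice produce isotopic contact structures on the enlarged $B$. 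Iterating this argument throughout the first continued fraction block shows that exactly the first sign is free, giving the ``truncated'' equivalence. I expect this absorption argument (Honda's bypass-through-a-solid-torus lemma in \cite{key-5}) to be the main technical obstacle; it must be checked that the bypass attached to a standard neighborhood always exists and is unique up to isotopy.

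\textbf{Step 4 (Injectivity and surjectivity of the correspondence).} Surjectivity of the map to truncated paths is the construction of Step~1 applied to any desired target path. Injectivity follows by assembling a tight contact structure from any given truncated path: decorate the first jump arbitrarily, build the corresponding $T^{2}\times I$ by Theorem~\ref{classification of T2xI}, and glue to the unique tight standard neighborhood on the inside; Step~3 shows this depends only on the $\sim$-class of the truncated path. Tightness of the resulting structure follows from Theorem~\ref{owing to Honda} applied to the extended decorated path (with any choice of initial sign), since minimality of the truncated path implies every shortening inside is automatically consistent. This produces the claimed bijection.
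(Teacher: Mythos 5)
First, a point of comparison: the paper does not prove this statement at all --- it is background quoted verbatim from Honda \cite{key-5} --- so your sketch is really a reconstruction of Honda's argument. The skeleton you chose (factor off a standard neighborhood of a Legendrian core and classify the complementary $T^{2}\times I$ by Theorem~\ref{classification of T2xI}) is indeed Honda's, but two steps are genuinely broken. The absorption claim in Step~3 is false as stated: if you enlarge the inner solid torus $B$ (boundary slope $r_{1}$, which forms an integral basis with the meridian $s_{1}$) across one basic slice from $r_{1}$ to $r_{2}$, the result is a solid torus of boundary slope $r_{2}$, and $r_{2}$ is in general \emph{not} Farey-adjacent to $s_{1}$; such a torus is not a standard neighborhood and carries two tight structures, which are realized precisely by the two signs of the absorbed slice. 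Concretely, in the paper's conventions (meridian $\infty$), boundary slope $-1/3$ corresponds to the truncated path $\infty\rightarrow-1\rightarrow-1/2\rightarrow-1/3$ and has exactly three tight structures; absorbing the slice from $-1$ to $-1/2$ produces the slope $-1/2$ torus, whose two tight structures are distinguished exactly by that sign. Iterating your absorption through the first continued fraction block would erase both decorated signs and predict a unique structure on the slope $-1/3$ torus, contradicting the count of three --- the argument proves too much. The truncation of the first sign needs no absorption: the innermost piece from $s_{1}$ to $r_{1}$ is not a basic slice carrying a sign at all, but the solid torus $B$ itself, whose tight structure is unique by Kanda \cite{key-7} (the $s_{1}\cdot s_{2}=\pm1$ case of Theorem~\ref{all same signs theorem}), a fact you already have in Step~2.

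The second gap is injectivity. Step~4 only shows that the assembly map from truncated paths modulo $\sim$ to contact structures is well defined (and even that leans on the flawed Step~3); it never shows that inequivalent truncated paths give non-isotopic structures. This cannot be inherited formally from Theorem~\ref{classification of T2xI}, because an isotopy of the solid torus need not preserve the chosen convex torus $T'$, so isotopic solid tori could a priori factor with inequivalent outer layers --- indeed this non-uniqueness of the factorization is exactly the mechanism by which the first sign is lost. Honda separates the classes with an actual invariant (relative Euler class computations, together with explicit realizations of every class), and some such argument must appear. Smaller repairs: in Step~1 you ``choose the path you wish to realize,'' which is only relevant to surjectivity --- to define the map you must extract the path from a factorization of the given $\xi$; and the existence of a standard neighborhood with boundary slope exactly $r_{1}$, plus the automatic minimal twisting of the outer layer (a boundary-parallel convex torus of meridional slope in a tight solid torus would yield an overtwisted disk), deserves an explicit word.
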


\begin{thm}
(Honda, \cite{key-5}) (1) Given $T^{2}\times I$ with boundary $T_{1}\sqcup T_{2}$,
and two multicurves $\varGamma_{i}$ on $T_{i}$ with $\#\varGamma_{i}=2$
such that $s_{1}=slope\left(\varGamma_{1}\right)\leq slope\left(\varGamma_{2}\right)=s_{2}$,
there are exactly two tight contact structures on $T^{2}\times I$,
and these contact structures are universally tight. The paths describing
these two structures are the same, one decorated entirely by ``$+$'',
and the other decorated entirely by ``$-$''.

(2) Given $S^{1}\times D^{2}$with boundary $T$, and a multicurve
$\varGamma$ on $T$ with $\#\varGamma=2$ such that $s_{2}=slope\left(\varGamma\right)$,
and $s_{1}=slope\left(\mu\right)$, where $\mu$ is a meridional curve
for $T$, then, if $s_{1}\cdot s_{2}\neq\pm1$, there are exactly
two tight contact structures on $S^{1}\times D^{2}$, and these contact
structure is universally tight. The paths describing these two structures
are the same, one decorated entirely by ``$+$'', and the other
decorated entirely by ``$-$''. If $s_{1}\cdot s_{2}=\pm1$, then
there exists a unique tight contact structure on $S^{1}\times D^{2}$,
and this contact structure is universally tight. \label{all same signs theorem}
\end{thm}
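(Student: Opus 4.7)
The plan is to leverage the classification of tight, minimally twisting contact structures by equivalence classes of decorated minimal paths in the Farey tessellation, together with the shortening criterion of Theorem~\ref{owing to Honda}, to isolate precisely which equivalence classes produce universally tight structures.

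For part (1), I first fix the unique minimal undecorated path from $s_{1}$ to $s_{2}$. By Theorem~\ref{classification of T2xI}, a tight minimally twisting contact structure on $T^{2}\times I$ is determined by a decoration modulo sign-shuffling within each continued fraction block. The all-``$+$'' and all-``$-$'' decorations are fixed by every such shuffle, so they yield two distinct isotopy classes. To see both are universally tight, I would use that each basic slice admits exactly two universally tight models distinguished by relative Euler class (Honda/Giroux); concatenating basic slices of like sign gives a contact structure whose universal cover $\mathbb{R}^{2}\times I$ decomposes as an infinite like-sign concatenation of basic slices, and applying Theorem~\ref{owing to Honda} to every finite sub-stack shows every shortening remains consistent in the cover, so the lift is tight.

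Conversely, I would show that any decorated path carrying both a ``$+$'' and a ``$-$'' (modulo shuffling within continued fraction blocks) is already virtually overtwisted. After first shuffling signs within blocks so they are as segregated as possible, two adjacent continued fraction blocks with opposite sign combine across the block boundary into a non-maximal jump whose two halves have opposite signs; this is an inconsistent shortening, and Theorem~\ref{owing to Honda} applied to a suitable finite cyclic cover exhibits an overtwisted disk. This leaves only the two monochromatic decorations as universally tight, completing (1).

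Part (2) reduces to part (1) by digging out a standard neighborhood $N$ of a Legendrian core of $S^{1}\times D^{2}$, producing a $T^{2}\times I$ whose outer boundary has slope $s_{2}$ and whose inner boundary can be taken with slope arbitrarily close to $s_{1}$. Tight minimally twisting structures on $S^{1}\times D^{2}$ correspond to truncated paths from $s_{1}$ to $s_{2}$: the first jump is undecorated because the sign is absorbed by the (uniquely modeled) neighborhood $N$, where opposite choices of sign are related by a self-diffeomorphism of the solid torus. Applying part (1) to the remaining decorated segment leaves only the all-``$+$'' and all-``$-$'' truncated decorations as universally tight. In the exceptional case $s_{1}\cdot s_{2}=\pm 1$ the endpoints are adjacent in the Farey tessellation, so the truncated path has a single undecorated jump; the resulting contact structure is unique and is universally tight by the basic-slice case.

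The main obstacle is verifying that mixed-sign decorations are actually virtually overtwisted, rather than merely failing the universal tightness test in a weaker sense. This requires explicitly identifying a finite cover in which a shortening of the decorated path becomes inconsistent in the sense of Theorem~\ref{owing to Honda}, and then extracting an overtwisted disk from this inconsistency by a convex-surface argument on a lifted boundary-parallel torus. Once this is in place, the rest of the theorem is a direct bookkeeping consequence of the classification theorems.
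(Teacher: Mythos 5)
The paper does not supply a proof of this theorem; it is quoted from Honda's classification paper \cite{key-5}, so there is no in-paper argument to compare against. Evaluating your sketch on its own terms, two central steps do not close. Your justification for universal tightness of the monochromatic structures proposes applying Theorem~\ref{owing to Honda} to ``finite sub-stacks'' of the lift to the universal cover $\mathbb{R}^{2}\times I$; but the shortening criterion and the entire Farey-tessellation calculus are statements about dividing sets on convex \emph{tori} inside $T^{2}\times I$. The lifts of those tori to $\mathbb{R}^{2}\times I$ are non-compact, there is no notion of slope or decorated path there, and the consistency test has nothing to evaluate. What one actually does is exhibit an explicit model of the all-``$+$'' (or all-``$-$'') structure, for instance inside the standard rotationally invariant tight contact structure on $T^{3}$ or on $S^{1}\times\mathbb{R}^{2}$, and read off universal tightness from the model; this is not a corollary of the decorated-path bookkeeping.

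For the converse, you correctly identify the main obstacle, but the mechanism you outline does not supply the missing step. A minimal decorated path cannot, by definition, be shortened, so the last jump of one continued fraction block and the first jump of an adjacent block do not amalgamate into a single jump whose consistency could be tested; Theorem~\ref{owing to Honda} applied to a minimal mixed-sign path returns ``tight,'' not ``overtwisted.'' The structure is in fact tight, and the entire content is to show it fails to be \emph{universally} tight, which requires either a relative Euler class computation distinguishing it from the two monochromatic models, or a concrete construction of an overtwisted disk in a finite cover, which in turn requires tracking how the Farey data of the path transforms under the covering so that a genuinely shortenable, inconsistent path appears upstairs. Neither step is present in the sketch. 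Your reduction of part (2) to part (1) by digging out a standard core neighborhood is reasonable in outline.
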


It follows from Theorem~\ref{all same signs theorem} that if we
have a path with a mixture of signs, then the contact structure described
by this path on either $T^{2}\times I$, or on $S^{1}\times D^{2}$,
must be virtually overtwisted.

\section{Cables in Solid Tori}

In this section, we will give the proof of Theorems~\ref{MainMainThm},
~\ref{MainMainThm2}, and ~\ref{Main Thm}. We would like to record
and make use of the following result.
\begin{thm}
\label{convex torus}
(Etnyre-Honda, \cite{key-1}) Any cable in a standard neighborhood
of a Legendrian knot can be put on a convex torus.
\end{thm}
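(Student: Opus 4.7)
The plan is to produce a convex torus $T\subset N$ on which (a Legendrian isotopic copy of) $L'$ sits as a Legendrian ruling, by combining Honda's classification of tight contact structures on the standard solid torus with a convex annulus argument.

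By Honda's classification applied to $(N,\xi|_{N})$, the standard neighborhood $N$ admits boundary-parallel convex tori of many rational dividing slopes. First I would pick such a convex torus $T_{0}\subset N$ of dividing slope $s$ close to, but distinct from, $q/p$, and let $L''$ be a Legendrian ruling of $T_{0}$ of slope $q/p$. Since $L''$ is, topologically, a $(p,q)$-cable of $L$, it is topologically isotopic to $L'$ inside $N$; consequently there is an embedded annulus $A\subset N$ with $\partial A=L'\cup(-L'')$. After a $C^{\infty}$-small perturbation fixing the Legendrian boundary, $A$ may be taken to be convex.

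The dividing set $\Gamma_{A}$ of the convex annulus then consists of arcs with endpoints on $L'\cup L''$, together with possibly some closed curves. Using the tightness of $\xi|_{N}$ and a careful choice of $s$ (a Farey neighbor of $q/p$ at the appropriate distance, calibrated so that the twisting of $L''$ relative to $T_{0}$ balances the twisting of $L'$ relative to $A$), one controls the boundary-parallel arcs of $\Gamma_{A}$: each such arc lying on the $L'$ side supplies a bypass, and installing these bypasses via the Legendrian realization principle produces a Legendrian isotopy that transports $L'$ along $A$ onto $T_{0}$. After this isotopy, $L'$ is a Legendrian ruling of the convex torus $T_{0}$, which is the desired conclusion.

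The main obstacle is the bypass analysis of $\Gamma_{A}$: one must verify that all boundary-parallel arcs appear on the correct side of $A$ and that the associated bypasses can be installed consistently, keeping $L'$ in $N$ throughout. The judicious choice of $s$ so that the twisting counts on the two sides of $\partial A$ are matched is exactly what makes this work; with that calibration in place, the remainder reduces to a standard application of Honda--Giroux bypass calculus inside the contact solid torus.
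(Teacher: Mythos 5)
This statement is quoted by the paper from Etnyre--Honda \cite{key-1} and no proof of it is given in the text, so I am judging your sketch against the standard argument it is meant to reproduce. Your general toolkit is the right one (convex tori inside the solid torus, a convex annulus from the cable to a ruling curve, the imbalance principle and bypasses), but two of your steps have genuine gaps. First, you cannot simply ``perturb $A$ to be convex fixing the Legendrian boundary'': by Kanda's criterion \cite{key-7} this requires the twisting of $\xi$ along each boundary component relative to $A$ to be non-positive. For the ruling curve $L''$ this is automatic, but for $L'$ the $A$-framing is the torus framing, so the condition is exactly $tw(L';A)=tb(L')-pq\leq 0$. That inequality is not available a priori; inside this paper it is the content of Lemma~\ref{lem:twisting<=00003Dpq}, whose short proof is deduced \emph{from} Theorem~\ref{convex torus}, so quoting it here would be circular. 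You would need the independent argument (contactomorph the standard neighborhood to a neighborhood of the $tb=-1$ unknot and invoke the torus-knot bound of \cite{key-2}), or else treat the positive-twisting case separately before any convex annulus exists.

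Second, the bypass mechanism is misdescribed. A boundary-parallel dividing arc of $\Gamma_{A}$ along $L'$ gives a bypass that \emph{destabilizes} $L'$ (it increases the twisting and changes the Legendrian isotopy class); it does not ``transport $L'$ along $A$ onto $T_{0}$,'' and after destabilizing you are no longer placing the original knot on a torus. The correct endgame is the opposite: use the imbalance principle to eliminate boundary-parallel arcs, attaching the bypasses arising on the $L''$ side to $T_{0}$ to adjust its dividing slope (or choosing the slope so that $\tfrac{1}{2}\left|L''\cap\Gamma_{T_{0}}\right|=-tw(L';A)$), and once $\Gamma_{A}$ consists only of arcs traversing the annulus, isotope the torus rather than the knot -- e.g.\ take the convex boundary of a neighborhood of $T_{0}\cup A$ and round edges -- so that the resulting convex torus contains $L'$. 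Note also that your insistence on $s\neq q/p$ cannot handle the case $tw(L';A)=0$: a ruling curve always has strictly negative twisting, so no balancing is possible, and that case requires a convex torus of dividing slope exactly $q/p$ with $L'$ realized as a Legendrian divide; moreover the dividing slopes realized by convex tori inside a standard neighborhood are constrained (they lie in the Farey interval between the meridian and the integer boundary slope), so ``$s$ close to $q/p$'' may not exist and the calibration must be phrased in terms of intersection numbers with the ruling slope instead.
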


\begin{prop}
If $\xi$ is a universally tight contact structure on a solid torus
$S$ with convex boundary, then any Legendrian $\left(p,q\right)$
knot $L\subset S$ has $tw\left(L;\partial S\right)\leq0$.\label{virtually overtwisted improved}
\end{prop}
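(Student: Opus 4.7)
The plan is to argue by contradiction. Assume $c := tw(L;\partial S) > 0$ and take a standard neighborhood $N_L$ of $L$: then $\partial N_L$ is convex with two dividing curves of slope $c$ in the cable framing (equivalently, of slope $tb(L) = c + pq$ in the Seifert framing of $L$).

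The strategy is to pass to the $|p|$-fold cyclic cover $\pi\colon\widehat S \to S$ corresponding to the subgroup $p\mathbb{Z}\subset \pi_1(S) = \mathbb{Z}$. Since $\xi$ is universally tight, the pulled-back $\widehat\xi$ on $\widehat S$ is also universally tight. The lift $\widehat L$ is a single closed Legendrian curve (as $\pi_1(L)=p\mathbb{Z} = \pi_1(\widehat S)$), topologically a $(1,q)$-curve in $\widehat S$ and hence isotopic to the core of $\widehat S$. The covering preserves parallel-torus framings, so $tw(\widehat L;\partial\widehat S) = tw(L;\partial S) = c$; in the $(\widehat\mu,\widehat\lambda)$-basis of $\partial\widehat S$, the outer boundary has dividing slope $\widehat s_2 = p\,s_2$, while $\partial\widehat{N_L}$---parallel to $\partial\widehat S$ since $\widehat L$ is core-type---has dividing slope $q+c$ (because for the $(1,q)$-curve $\widehat L$, the cable framing differs from the $\widehat\lambda$-framing by $q$).

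Since $\partial\widehat{N_L}$ is a convex torus parallel to $\partial\widehat S$, its dividing slope $q+c$ must appear on the minimal decorated Farey path from $\infty$ to $\widehat s_2$ encoding $\widehat\xi$. By Theorem~\ref{all same signs theorem}, universal tightness forces every decorated jump on this path to carry a common sign, and in particular imposes compatibility of the two sub-paths that meet at $q+c$. I would then argue, via a case analysis on the signs and magnitudes of $p$, $q$, and $s_2$, that the assumption $c>0$ forces either the vertex $q+c$ to lie outside the admissible clockwise arc (so that $\partial\widehat{N_L}$ cannot occur as a boundary-parallel convex torus in the universally tight $\widehat\xi$) or an inconsistent-sign shortening across $q+c$ via Theorem~\ref{owing to Honda}, producing a virtually overtwisted structure and contradicting universal tightness.

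The main obstacle is this Farey-geometric case analysis. The case $q+c$ outside the clockwise Farey arc from $\infty$ to $\widehat s_2$ yields an immediate contradiction, but the case where $q+c$ lies inside the arc is more delicate: here one must classify the possible core-type Legendrian representatives in universally tight solid tori and rule out $tw>0$, which ultimately depends on how Honda's bypass and shortening rules interact with the insertion of $q+c$ as an intermediate vertex under the same-sign constraint. Carrying out this analysis uniformly across all sign regimes of $p$, $q$, and $s_2$ is the technical heart of the proof.
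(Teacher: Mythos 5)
Your covering-space idea is a genuinely different approach from the paper's, which builds up through a chain of lemmas: first the case of a standard neighborhood (integer slope, $\left|\varGamma\right|=2$, via Etnyre--Honda's Theorem~\ref{convex torus} that cables in standard neighborhoods sit on convex tori), then dropping $\left|\varGamma\right|=2$ by abstractly extending the meridional disk, then upgrading to universally tight non-integer slope by thickening in the Farey picture, and finally dropping $\left|\varGamma\right|=2$ again via bypass attachments and an abstract annulus extension. Your plan instead passes to the $\left|p\right|$-fold cyclic cover and tries to analyze the resulting core-type curve directly in the Farey picture. However, there are concrete gaps.

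First, the reduction does not actually simplify the problem: after lifting you are left with a $(1,q)$-curve $\widehat{L}$ in a universally tight solid torus $\widehat{S}$ and must prove $tw(\widehat{L};\partial\widehat{S})\leq 0$, which is precisely the $p=\pm1$ case of the very proposition you are trying to prove. You acknowledge this (``classify the possible core-type Legendrian representatives in universally tight solid tori and rule out $tw>0$'') and call it the ``technical heart,'' but you do not supply it. Since that case is not handled by the covering trick (for $p=\pm1$ the cover is trivial), the proof is incomplete at exactly the step where the real work is. The paper avoids this entirely: its base case (Lemma~\ref{lem:twisting<=00003Dpq}) invokes Theorem~\ref{convex torus} to place any $(p,q)$-cable, $p=\pm1$ included, on a convex torus, where twisting $\leq 0$ is automatic.

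Second, after lifting, the dividing set $\varGamma_{\partial\widehat{S}}=\pi^{-1}(\varGamma_{\partial S})$ can have more than two components (it has $2\gcd(p,\text{denominator of }s_2)$ components), so Honda's Farey classification, which requires $\#\varGamma=2$, does not apply directly to $\widehat{S}$; you would need a further reduction of the kind the paper carries out with bypasses or the annulus/disk-extension trick. Third, the assertion that the slope $q+c$ of $\partial\widehat{N}_L$ ``must appear on the minimal decorated Farey path'' is not the correct constraint: what is true is that $q+c$ must lie in the Farey interval $\left[\text{meridian},\,\widehat{s}_2\right]$ of realizable boundary-parallel slopes (assuming minimal twisting), but it need not be a vertex of the minimal path, so the ``inconsistent shortening across $q+c$'' argument as phrased is not available without further justification.
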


We delay the proof of Proposition~\ref{virtually overtwisted improved}
to the end of this section, but use it here to give proofs of our
main theorems stated in the introduction.

\subsubsection*{Proof of Theorem~\ref{MainMainThm}}

If $K$ has Legendrian large cables, then there exists $L\in\mathcal{L}\left(K_{p,q}\right)$
such that $tb\left(L\right)>pq$. Take a solid torus $S$ representing
$K$ and containing $L$ as a $\left(p,q\right)$ curve. Perturb $S$
to have convex boundary. By hypothesis $tw\left(L;\partial S\right)>0$,
so by Proposition~\ref{virtually overtwisted improved}, $\left.\xi \right|_{S}$
must be virtually overtwisted. Suppose that it were possible to thicken
$S$ to a standard neighborhood $\widetilde{S}$ of $K$. Then $slope\left(\varGamma_{\partial\widetilde{S}}\right)\in\mathbb{Z}$
which implies by a result of Kanda \cite{key-7}, that $\left.\xi \right|_{\widetilde{S}}$
is the unique tight contact structure on $\widetilde{S}$, and moreover
that $\left.\xi \right|_{\widetilde{S}}$ is universally tight. But this is
a contradiction since $S\subset\widetilde{S}$ and $\left.\xi \right|_{S}$
is virtually overtwisted, so no such thickening exists. If $K$ were
uniformly thick, then any neighborhood of $K$ would be thickenable
to a $slope\left(\overline{tb}(K)\right)$ standard neighborhood of
$K$, which we have just seen is not possible. \textifsymbol[ifgeo]{48}

\subsubsection*{Proof of Theorem~\ref{MainMainThm2}}

By assumption, there exists $L\in\mathcal{L}\left(K_{p,q}\right)$
such that $tb\left(L\right)>pq$. Stabilize $L$ to obtain $\widetilde{L}$
such that $tb\left(\widetilde{L}\right)=pq$. There is a solid torus
$S$ representing $K$ for which $\widetilde{L}\subset\partial S$,
and as discussed at beginning of Section $2$, we see that $tw\left(\widetilde{L};\partial S\right)=0$. We can
therefore $C^{0}$ perturb a collar neighborhood $N$ of $\widetilde{L}$
in $\partial S$ to be convex, and then $C^{\infty}$ perturb $\partial S\setminus N$
to obtain a solid torus $\widetilde{S}$ representing $K$ with convex
boundary. Since $tw\left(\widetilde{L};\partial\widetilde{S}\right)=0$, and
since $slope\left(\widetilde{L}\right)=\frac{q}{p}$, we must have
that $slope\left(\varGamma_{\partial\widetilde{S}}\right)=\frac{q}{p}$,
owing to the fact that $tw\left(\widetilde{L};\widetilde{S}\right)=-\frac{1}{2}\left|\widetilde{L}\bullet\varGamma_{\partial\widetilde{S}}\right|$
where $C_{1}\bullet C_{2}$ denotes the geometric intersection number
of two curves on a torus. But $\frac{q}{p}>\overline{tb}(K)$ by assumption,
so $w\left(K\right)>\overline{tb}(K)$. \textifsymbol[ifgeo]{48}

\subsubsection*{Proof of Theorem~\ref{Main Thm}}

In \cite{key-6}, Yasui shows that for integers $n\leq\frac{3-m}{4}$,
the cables $K_{n,-1}^{m}$ have the property that $\overline{tb}\left(K_{n,-1}^{m}\right)=-1$.
So for any $m\leq-5$ and any $1<n\leq\left\lfloor \frac{3-m}{4}\right\rfloor $
we see that $K^{m}$ has Legendrian large cables $L\in\mathcal{L}\left(K_{n,-1}^{m}\right)$.
Then by Theorem~\ref{MainMainThm} $K^{m}$ is not uniformly thick
and has virtually overtwisted neighborhoods, and by Theorem~\ref{MainMainThm2}
we have that $w\left(K^{m}\right)>\overline{tb}(K^{m})$ \textifsymbol[ifgeo]{48}

\subsubsection*{Proof of Proposition~\ref{ctWidthEstimate}}

The slope of the cable $K_{n,-1}^{m}$ is $slope\left(K_{n,-1}^{m}\right)=-\frac{1}{n}$.
Whenever $n\leq\frac{3-m}{4}$, we know there exist $L\in\mathcal{L}\left(K_{n,-1}^{m}\right)$
which are Legendrian large. Stabilize $L$ to obtain $\widetilde{L}$
such that $tb\left(\widetilde{L}\right)=-n$. There is a solid torus
$S$ representing $K^{m}$ for which $\widetilde{L}\subset\partial S$,
and we have seen that $tw\left(\widetilde{L};\partial S\right)=0$. Using the
strategy of the proof of Theorem~\ref{MainMainThm2}, we can $C^{0}$
perturb a collar neighborhood $N$ of $\widetilde{L}$ in $\partial S$
to be convex, and then $C^{\infty}$ perturb $\partial S\setminus N$
to obtain a solid torus $\widetilde{S}$ representing $K^{m}$ with
convex boundary. Since $tw\left(\widetilde{L};\partial\widetilde{S}\right)=0$,
and since $slope\left(\widetilde{L}\right)=-\frac{1}{n}$, we must
have that $slope\left(\varGamma_{\partial\widetilde{S}}\right)=-\frac{1}{n}$,
and therefore that $w\left(K^{m}\right)\geq-\frac{1}{n}$. \textifsymbol[ifgeo]{48}

Now we will give a series of results leading to the proof of Proposition~\ref{virtually overtwisted improved}.

\begin{lem}
\label{lem:twisting<=00003Dpq}Let $S$ be a solid torus with convex
boundary, $\left|\varGamma_{\partial S}\right|=2$, and $slope\left(\varGamma_{\partial S}\right)\in\mathbb{Z}$
with its unique tight contact structure $\xi$, then any Legendrian
$\left(p,q\right)$ knot $L\subset S$ has $tw\left(L;\partial S\right)\leq0$.
\end{lem}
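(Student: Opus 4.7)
The plan is to use Theorem~\ref{convex torus} to place $L$ on a convex torus parallel to $\partial S$ and then read off the twisting from its intersection with the dividing set. First I would observe that the hypotheses on $S$ --- convex boundary with two dividing curves of integer slope $n$ and the unique tight contact structure --- characterize $(S,\xi|_S)$ as the standard neighborhood of a Legendrian core (the core can be Legendrian realized with $tb=n$, and by Honda's classification its standard neighborhood is contactomorphic to $(S,\xi|_S)$). Hence Theorem~\ref{convex torus} applies to the Legendrian $(p,q)$ cable $L\subset S$, giving a Legendrian isotopy (inside $S$) taking $L$ to a Legendrian curve $L'$ sitting on a convex torus $T\subset S$ parallel to $\partial S$.

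Next I would invoke the standard formula for the contact twisting of a Legendrian curve on a convex surface relative to the surface framing:
\[
tw(L';T)=-\tfrac{1}{2}\bigl|L'\cap\varGamma_T\bigr|,
\]
which is manifestly non-positive. (It equals zero exactly when $L'$ is parallel to $\varGamma_T$, i.e.\ a Legendrian divide of slope $q/p$; otherwise the geometric intersection is strictly positive.)

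To finish, I would note that the torus framing induced on a curve is independent of which boundary-parallel torus one uses: since $T$ and $\partial S$ are parallel through a one-parameter family of tori in $S$, the framing varies continuously and is therefore constant along the family. Thus $tw(L';\partial S)=tw(L';T)\leq 0$, and since a Legendrian isotopy carried out inside $S$ preserves the twisting relative to the $\partial S$ framing, we conclude $tw(L;\partial S)\leq 0$.

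The main obstacle is really the first step: one has to be careful that the hypotheses of Theorem~\ref{convex torus} are genuinely met, i.e.\ that the uniquely tight solid torus with integer dividing slope is the standard neighborhood of its Legendrian core, so that the cable $L$ can be Legendrian isotoped onto a convex boundary-parallel torus. Once that setup is in place, the dividing-set computation of $tw(L';T)$ and the identification $tw(L;\partial S)=tw(L';T)$ are essentially formal consequences of convex surface theory.
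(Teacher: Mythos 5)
Your proposal is correct and follows essentially the same route as the paper's own (primary) proof: the paper likewise observes that the lemma is immediate from Theorem~\ref{convex torus}, since the hypotheses make $S$ a standard neighborhood of its Legendrian core and any Legendrian curve $L'$ on a convex boundary-parallel torus $T$ satisfies $tw\left(L';T\right)=-\tfrac{1}{2}\left|L'\cap\varGamma_{T}\right|\leq 0$, which equals $tw\left(L;\partial S\right)$ by the framing and isotopy-invariance considerations you spell out. (The paper also records an alternative argument via a contactomorphism onto a standard neighborhood of a $tb=-1$ unknot and the Etnyre--Honda bound for torus knots, but your version matches its first, shorter proof.)
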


\begin{proof}
Notice that this follows immediately from Theorem~\ref{convex torus}, since $S$ is a standard neighborhood, and any Legendrian curve $L$ on a convex torus $T$ must have $tw\left(L;T\right)=tw\left(L;\partial S\right)\leq0$. Alternatively, we can reason in the following way.
Recall that Kanda \cite{key-7} showed that any solid torus with integer
slope and two dividing curves has a unique tight contact structure.
Suppose that $S$ is a solid torus with convex boundary, $\left|\varGamma_{\partial S}\right|=2$,
and $slope\left(\varGamma_{\partial S}\right)=k\in\mathbb{Z}$ with
its unique tight contact structure $\xi$, and that $L\subset S$
is a Legendrian $\left(p,q\right)$ knot. Then $S$ is a standard
neighborhood of a Legendrian core curve $K$. Any two standard neighborhoods
are contactomorphic, so we can find a neighborhood $N\subset\left(S^{3},\xi_{std}\right)$
of a Legendrian unknot $U\subset S^{3}$ with $tb\left(U\right)=-1$,
and a contactomorphism $\varphi:S\rightarrow N$ which sends $\varphi\left(K\right)=U$.
This contactomorphism sends torus knots to torus knots, so our $\left(p,q\right)$
knot $L$ is mapped to a $\left(p,q-p\left(k+1\right)\right)$ knot
$\varphi\left(L\right)$ as one can easily check. But now $\varphi\left(L\right)$
is a torus knot in $\left(S^{3},\xi_{std}\right)$, and Etnyre and
Honda have shown \cite{key-2} that $tb\left(\varphi\left(L\right)\right)\leq p\left(q-p\left(k+1\right)\right)$.
But we understand how to switch between the Seifert framing and the
framing coming from the torus $\partial N$, that is, $tw\left(\varphi\left(L\right);\partial N\right)=tb\left(\varphi\left(L\right)\right)-p\left(q-\left(k+1\right)\right)\leq0$.
This implies that $tw\left(L;\partial S\right)\leq0$, since $N$
and $S$ are contactomorphic.
\end{proof}
We can strengthen Lemma~\ref{lem:twisting<=00003Dpq} slightly by
dropping the assumption that $\left|\varGamma\right|=2$.
\begin{lem}
Let $S$ be a solid torus with convex boundary, and $slope\left(\varGamma_{\partial S}\right)\in\mathbb{Z}$
with any tight contact structure $\xi$, then any Legendrian $\left(p,q\right)$
knot $L\subset S$ has $tw\left(L;\partial S\right)\leq0$.\label{lem:Let-improved}
\end{lem}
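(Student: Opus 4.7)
The plan is to reduce to Lemma~\ref{lem:twisting<=00003Dpq}, which handles the case $\left|\varGamma_{\partial S}\right|=2$. Write $\left|\varGamma_{\partial S}\right|=2n$ and assume $n\geq 2$. I aim to produce a convex torus $T\subset int\left(S\right)$ parallel to $\partial S$ with $\left|\varGamma_{T}\right|=2$ and $slope\left(\varGamma_{T}\right)=k$, such that $L$ lies in the solid torus $S'$ that $T$ bounds in $S$. Once such $T$ is in hand, the argument concludes quickly: since $T$ and $\partial S$ cobound a thickened torus in $S$, they induce the same framing on $L$, so $tw\left(L;\partial S\right)=tw\left(L;T\right)$. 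The restriction $\xi\mid_{S'}$ is tight (as a sub-contact structure of a tight one) on a solid torus with convex boundary of integer slope $k$ and two dividing curves, so Lemma~\ref{lem:twisting<=00003Dpq} applies to yield $tw\left(L;T\right)\leq 0$.

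To produce $T$ I would induct on $n$, peeling off one thickened-torus layer at a time. Fix a standard neighborhood $N_{L}$ of $L$, and pick a convex meridional disk $D$ of $S$ disjoint from $N_{L}$ (possible by general position, since $D$ is codimension one in $S$). Tightness of $\xi\mid_{S}$ and Giroux's criterion force $\varGamma_{D}$ to be a union of properly embedded arcs with no homotopically trivial closed components. Because $\partial D$ has slope $\infty$ and $\varGamma_{\partial S}$ has slope $k\in\mathbb{Z}$, the intersection $\left|\partial D\cap\varGamma_{\partial S}\right|=2n$, so $\varGamma_{D}$ consists of exactly $n$ arcs. For $n\geq 2$ the topology of the disk forces at least one non-trivial boundary-parallel arc of $\varGamma_{D}$; the half-disk that this arc cuts off serves as a bypass for $\partial S$ sitting inside $S$ and disjoint from $N_{L}$. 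Attaching this bypass produces a parallel convex torus $T_{1}\subset int\left(S\right)$; standard bypass analysis (cf.\ Honda \cite{key-5}) shows that $\left|\varGamma_{T_{1}}\right|=2n-2$ and $slope\left(\varGamma_{T_{1}}\right)=k$. Iterating inside the solid torus bounded by $T_{1}$, still with $L$ in its interior, reduces $n$ to $1$ in $n-1$ steps and delivers the desired $T$.

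The main subtlety is verifying that each bypass attachment really decreases $\left|\varGamma\right|$ by two while keeping the slope fixed at $k$, rather than shifting the slope or increasing $\left|\varGamma\right|$. This boils down to checking the slope of the bypass: because the attaching Legendrian arc meets three dividing curves all of the same integer slope $k$, Honda's bypass attachment formula in \cite{key-5} confirms slope preservation together with the drop of two in $\left|\varGamma\right|$. With this verified the induction runs, and combining the final inner torus $T$ with the framing observation from the first paragraph gives the lemma.
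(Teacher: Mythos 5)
Your plan is to peel inward: find a bypass for $\partial S$ inside $S$ and attach bypasses until you reach a smaller solid torus $S'\subset S$ with $\left|\varGamma_{\partial S'}\right|=2$ and the same integer slope, still containing $L$, then apply Lemma~\ref{lem:twisting<=00003Dpq}. The paper's proof goes the opposite way: it realizes $\xi$ as an $S^{1}$-invariant contact structure determined by the dividing set on a convex meridional disk $D$ (via Giroux), abstractly extends $D$ to a larger disk $\widetilde{D}$ by gluing on annuli so that the number of dividing arcs drops to one, and thereby embeds $\left(S,\xi\right)$ in a larger solid torus $\left(\widetilde{S},\widetilde{\xi}\right)$ with $\left|\varGamma_{\partial\widetilde{S}}\right|=2$ and the same slope, to which Lemma~\ref{lem:twisting<=00003Dpq} applies; the framing of $L$ from $\partial\widetilde{S}$ agrees with that from $\partial S$, so the bound transfers.

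The inward approach has a gap at the very first step. You cannot choose a convex meridional disk $D$ of $S$ disjoint from $N_{L}$, nor even disjoint from $L$, when $p\neq0$: the class $\left[L\right]=p\in H_{1}\left(S\right)$ is dual to the meridional disk class in $H_{2}\left(S,\partial S\right)$, so $L$ meets every meridional disk in at least $\left|p\right|\geq1$ points. Your dimension count is also reversed: a codimension-one disk and a codimension-zero tube in a $3$-manifold generically intersect in a surface, they do not generically become disjoint. Consequently the outermost bypass half-disk you extract from $\varGamma_{D}$ may pass through $L$, and there is no general reason some outermost half-disk misses all $\left|p\right|$ intersection points; when it does not, the bypass attachment fails to keep $L$ inside the new inner torus and the induction stalls. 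The paper sidesteps the issue entirely by constructing $\widetilde{S}$ on the outside of $S$, where $L$ cannot interfere.
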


\begin{proof}
We will show that $\left(S,\xi\right)$ will embed in a tight contact
structure $\left(\widetilde{S},\widetilde{\xi}\right)$ that satisfies the hypothesis of
Lemma~\ref{lem:twisting<=00003Dpq}, and therefore show that $tw\left(L;\partial S\right)\leq0$.
To this end, we note that we can assume $slope\left(\varGamma_{\partial S}\right)=0$
by applying a diffeomorphism to $S$. Recall \cite{key-5}, that $\xi$
is completely determined by the dividing set $\varGamma_{D}$ on a meridional
disk $D$ of $S$. We will build a model situation for $S$ in which
we can construct $\left(\widetilde{S},\widetilde{\xi}\right)$. Since $\left|\varGamma_{\partial S}\right|>2$
we see that $\left|\varGamma_{D}\right|>1$. Suppose that we have
a convex disk $D$ with an arbitrary collection of dividing curves
$\varGamma$, as in Figure~\ref{Disk with arbitrary dividing curves}. 

\begin{figure}

\includegraphics{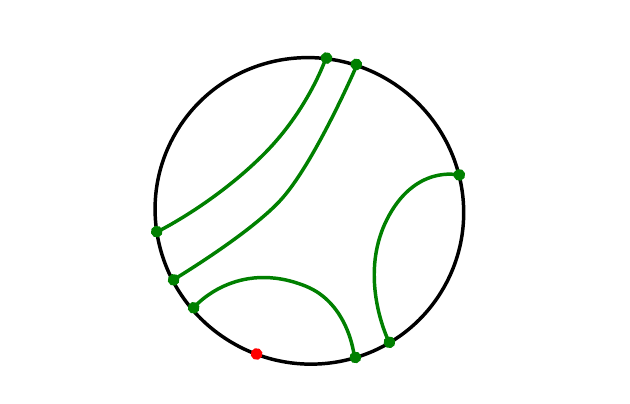}\caption{Arbitrary disk with arcs.}

\label{Disk with arbitrary dividing curves}

\end{figure}

Let $v$ be a vector field on $D$ that guides the characteristic foliation. We can
label the regions in $D\setminus\varGamma$ as either $\varSigma^{+}$
or $\varSigma^{-}$ so that no adjacent pair share the same label.
There exists an area form $\omega$ on $D$ which satisfies that $\pm div_{\omega}v>0$
on $\varSigma^{\pm}$. Assign a 1-form $\lambda=\iota_{v}\omega$, then we know from Giroux \cite{key-8} that there exists a function
$u:D\rightarrow\mathbb{R}$ such that $udt+\lambda$ gives rise to
a contact structure $\xi$ on $D\times\mathbb{R}$ that is invariant
in the $\mathbb{R}$ direction. Moreover, we know from a theorem of
Giroux that $\xi$ is tight, since there are no homotopically trivial
dividing curves. This invariance means that we can mod out by $\mathbb{Z}$
to obtain a tight contact structure on a solid torus $\nicefrac{D\times\mathbb{R}}{\mathbb{Z}}=D\times S^{1}$. 
The solid torus and contact structure we obtain in this way are contactomorphic to our original $\left(S,\xi\right)$, that is, there exists $v,\omega$ and $u:D\rightarrow\mathbb{R}$ for which this construction exactly reproduces $\left(S,\xi\right)$.

Now suppose that the number of properly embedded arcs is greater than
$1$. We would now like to reduce the number of dividing curves by
taking a larger disk containing our original $D$. So we attach an
annulus to $D$ to obtain $D_{ext}=D\cup_{\varphi}\left(S^{1}\times\left[0,1\right]\right)$
where $\varphi:S^{1}\times\left\{ 0\right\} \rightarrow\partial D$
is the gluing map. Denote the endpoints of the properly embedded arcs
as $\left\{ x_{1},\ldots,x_{2k}\right\} $. Notice that if we fix
a point on $p\in\partial D$ and move counterclockwise from $p$ along
$\partial D$, then it must happen that we encounter an $x_{i}$ followed
by an $x_{i+1}$ which are not endpoints of the same curve. If this
were not so, then there could only be one curve, which we have supposed
not to be the case. Without loss of generality, assume that these
two points are $x_{1}$ and $x_{2}$. Now connect these points by
an arc in $S^{1}\times\left[0,1\right]$. Form arcs from the remaining
points $\left\{ x_{3},\ldots,x_{2k}\right\} $ to $\partial D_{ext}$
by using $\left\{ x_{i}\right\} \times\left[0,1\right]$, as in Figure~\ref{Extended Disk}.
Notice that $D_{ext}$ has one fewer embedded arcs than $D$. So we
can iterate this procedure to obtain a disk $\widetilde{D}\supset D$
which has only $1$ properly embedded arc. Call this arc $\widetilde{\varGamma}$.
Notice that we can arrange the gluing map $\varphi$ to be smooth
and such that the extension of $\varGamma$ to $\widetilde{\varGamma}$
is smooth. We can also smoothly extend $\omega$ and $v$ to $\widetilde{D}$
so that the singular foliation on $\widetilde{D}$ guided by $v$
has $\widetilde{\varGamma}$ as a dividing curve. We can now build,
just as we did above, a contact structure $\widetilde{\xi}$ on $\widetilde{D}\times S^{1}=\widetilde{S}$
having $\widetilde{D}$ as a convex meridional disk, with convex boundary.
Since $\left|\varGamma_{\widetilde{D}}\right|=1$ we see that $tb\left(\partial\widetilde{D}\right)=-1$,
which in turn implies that $\left|\varGamma_{\partial\widetilde{S}}\right|=2$.
Notice that $\left.\widetilde{\xi}\right|_{S}=\xi$. Also notice that,
by construction, the method of reducing the number of dividing curves
on $\partial S$ yields $slope\left(\varGamma\right)=slope\left(\widetilde{\varGamma}\right)$.
Now by Lemma~\ref{lem:twisting<=00003Dpq}, any Legendrian $\left(p,q\right)$
knot $L\subset S$ has $tw\left(L;\partial S\right)\leq0$.

\begin{figure}

\includegraphics{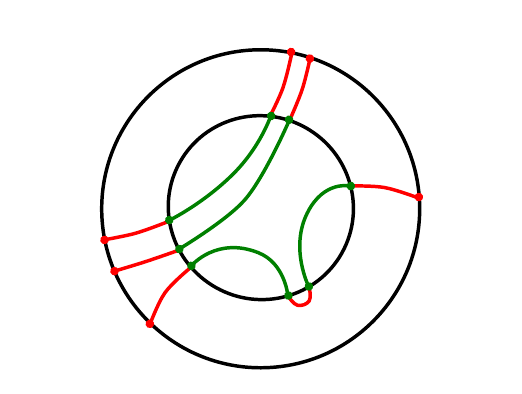}\caption{An annulus has been attached, and the number of curves has been reduced
by one.}
\label{Extended Disk}
\end{figure}
\end{proof}
\begin{lem}
If $\xi$ is a universally tight contact structure on a solid torus
$S$ with convex boundary and $\left|\varGamma_{\partial S}\right|=2$,
then any Legendrian $\left(p,q\right)$ knot $L\subset S$ has $tw\left(L;\partial S\right)\leq0$.\label{virtually overtwisted}
\end{lem}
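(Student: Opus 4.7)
The strategy is to embed $(S,\xi)$ into a larger solid torus $(\widetilde{S},\widetilde{\xi})$ whose convex boundary has integer slope and two dividing curves, with $\widetilde{\xi}|_S = \xi$. Once this is done, Lemma~\ref{lem:Let-improved} applied to $(\widetilde{S},\widetilde{\xi})$ gives $tw(L;\partial\widetilde{S}) \leq 0$. Since both $\partial S$ and $\partial\widetilde{S}$ are boundary-parallel tori in $\widetilde{S}$, a single $(p,q)$-cable torus $T^2$ containing $L$ can be chosen parallel to either, so they induce the same cabling framing on $L$, and $tw(L;\partial S) = tw(L;\partial\widetilde{S}) \leq 0$.

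To construct $\widetilde{S}$, let $s_2 = slope(\Gamma_{\partial S})$. If $s_2 \in \mathbb{Z}$ we may take $\widetilde{S} = S$ and apply Lemma~\ref{lem:Let-improved} directly. Otherwise, by Theorem~\ref{all same signs theorem}(2) the truncated Farey-tessellation path describing $\xi$ carries a single sign on every decorated jump; without loss of generality assume this sign is ``$+$''. Choose any integer $n$ reachable from $s_2$ by a clockwise Farey path (integers are vertices of the tessellation, so such $n$ always exists). Concatenate this clockwise path from $s_2$ to $n$ onto the original truncated path, decorating each new jump by ``$+$''. The result is a truncated path from the meridian slope to $n$ whose decorated jumps are all ``$+$''; by Theorem~\ref{owing to Honda} every shortening is consistent, so the path describes a tight contact structure $\widetilde{\xi}$ on a solid torus $\widetilde{S} \supset S$ with $|\Gamma_{\partial\widetilde{S}}| = 2$ and $slope(\Gamma_{\partial\widetilde{S}}) = n \in \mathbb{Z}$. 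The region $\widetilde{S}\setminus S$ is precisely the $T^2 \times I$ layer assembled from the newly added basic slices, and by construction $\widetilde{\xi}|_S = \xi$.

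\textbf{Main obstacle.} The principal point to verify is that the all-``$+$'' extension really yields a tight contact structure on a genuine solid torus whose outer boundary has the prescribed integer slope with exactly two dividing curves. This is handled by Honda's machinery: Theorem~\ref{owing to Honda} forces tightness because every shortening of an all-``$+$'' path is consistent, and the classification (Theorems~\ref{classification of T2xI} and its solid-torus analogue) pins down the boundary slope and dividing-set data of the attached $T^2 \times I$. A second minor subtlety is the identification $tw(L;\partial S) = tw(L;\partial\widetilde{S})$; this is immediate because both numbers equal the twisting of $\xi$ along $L$ with respect to the same cabling framing, namely the one induced by any boundary-parallel torus containing $L$ as a $(p,q)$-curve, which one can choose simultaneously parallel to $\partial S$ and $\partial\widetilde{S}$.
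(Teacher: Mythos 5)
Your construction is essentially the paper's own proof of Lemma~\ref{virtually overtwisted}: universal tightness forces an all-one-sign decorated path (Theorem~\ref{all same signs theorem}), one glues on basic slices carrying that same sign to thicken $S$ to a solid torus $\widetilde{S}$ with two dividing curves of integer slope, tightness of $\widetilde{S}$ follows from consistency of shortenings (Theorem~\ref{owing to Honda}), and the integer-slope case (Lemma~\ref{lem:twisting<=00003Dpq}, or your Lemma~\ref{lem:Let-improved}) finishes. The identification $tw\left(L;\partial S\right)=tw\left(L;\partial\widetilde{S}\right)$ is used implicitly in the paper as well and is unproblematic, since a torus containing $L$ and parallel to $\partial S$ is also boundary parallel in $\widetilde{S}$.

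The one step you state too loosely is the choice of the integer $n$. It is not enough that $n$ be ``an integer vertex reachable from $s_{2}$ by a clockwise path'': every vertex is reachable if the path is allowed to wrap around the circle, and a path from $s_{2}$ to $n$ that passes the meridional slope $s_{1}$ does not describe a tight thickening. Such a layer would contain a convex boundary-parallel torus whose dividing slope equals the meridional slope; a Legendrian divide on that torus is a meridian whose twisting relative to its spanning disk is $0$, which violates tightness of the enlarged solid torus no matter how the new jumps are decorated. Theorem~\ref{owing to Honda} and the classification statements are only applicable to paths that stay in the clockwise interval from the meridian to the target slope, so they cannot rescue a wrap-around choice. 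Thus you must take $n$ in the clockwise interval $\left(s_{2},s_{1}\right)$, and you should also fix coordinates so that ``integer slope'' matches the hypothesis of the integer-slope lemma, namely with the meridian at $\infty$. This is exactly why the paper first normalizes by a diffeomorphism of $S$ so that the meridian is $-\infty$ and $slope\left(\varGamma_{\partial S}\right)=-\nicefrac{r}{s}\leq-1$, and then thickens to the specific integer $-\left\lceil \nicefrac{r}{s}\right\rceil +1$, which lies in the clockwise interval between $-\nicefrac{r}{s}$ and the meridian. With that normalization (equivalently, taking $n=\left\lceil s_{2}\right\rceil$ in product coordinates when $s_{2}\notin\mathbb{Z}$) such an integer always exists, and the rest of your argument goes through as written.
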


\begin{proof}
By a diffeomorphism of $S$, we can assume that $slope\left(\varGamma_{\partial S}\right)=-\nicefrac{r}{s}$
where $-\infty\leq-\nicefrac{r}{s}\leq-1$, and that the meridional
slope is $-\infty$. Let $n=\left\lceil \nicefrac{r}{s}\right\rceil $.
Then since $\xi$ is universally tight, we know that any path in the
Farey tessellation, describing our contact structure has the property
that each jump must be decorated with the same sign by Theorem~\ref{classification of T2xI}. 
A portion of the Farey tessellation shows this in Figure~\ref{Farey tessellation}.
We can obtain a larger solid torus $\widetilde{S}\supset S$, convex,
two dividing curves, and with $slope\left(\varGamma_{\partial\widetilde{S}}\right)=-n+1$
in the following way. Take a shortest path in the Farey tessellation
from $-\nicefrac{r}{s}$ to $-n+1$, and decorate each jump with the
sign which appears in the description of the contact structure on
$S$. This describes a contact structure on $T^{2}\times I$ which extends $S$ to $\widetilde{S}$,
and since the signs are all the same we know that $\widetilde{S}$
is tight by Theorem~\ref{owing to Honda}. Moreover, we see that
$\widetilde{S}$ has integer slope giving it a unique tight contact
structure. Now we have that $tw(K;\partial S)\leq0$ by Lemma~\ref{lem:twisting<=00003Dpq}.

\begin{figure}
\begin{overpic}[unit=1mm,scale=1]{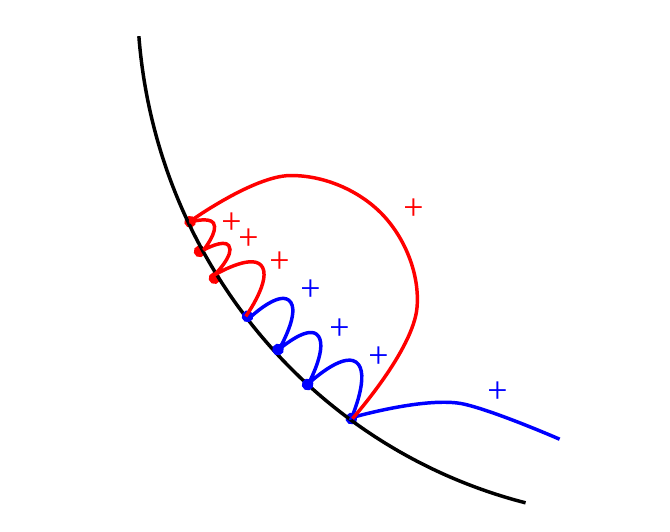}
\put(29,8){\colorbox{white}{\parbox{0.1cm}{%
    $-n$}}}
\put(3,29){\colorbox{white}{\parbox{1.2cm}{%
    $-n+1$}}}
\put(15,18){\colorbox{white}{\parbox{0.1cm}{%
    $-\nicefrac{r}{s}$}}}
\end{overpic}

\caption{Farey tessellation picture describing the contact structure on our
solid torus. The original solid torus, $S$, is shown in blue, while
the red indicates the $T^{2}\times I$ which is glued on to obtain
the larger solid torus $\widetilde{S}$. }

\label{Farey tessellation}

\end{figure}
\end{proof}
\begin{rem}
In the above proof, we are able to thicken $S$ to a larger solid
torus $\widetilde{S}\supseteq S$ with $slope\left(\varGamma_{\partial\widetilde{S}}\right)=-n+1$
because we are thinking of $S=S^{1}\times D^{2}$ abstractly as a
contact 3-manifold with convex boundary, and not embedded in any particular contact manifold. There is a shortest path
in the standard Farey tessellation picture from any negative rational
$-\nicefrac{r}{s}$ to $-n+1$ which describes our contact structure.
We are not claiming that if $S$ is a solid torus representing a knot $K\subseteq S^{3}$
it must always be thickenable in $S^{3}$, for example, Etnyre, LaFountain,
and Tosun have given examples of non-thickenable tori in \cite{key-3}.
\end{rem}

Proposition~\ref{virtually overtwisted improved} strengthens Lemma~\ref{virtually overtwisted}
slightly by dropping the assumption that $\left|\varGamma\right|=2$.

\subsubsection*{Proof of Proposition~\ref{virtually overtwisted improved} }

Suppose we are given a solid torus $S$ with convex boundary, a universally
tight contact structure $\xi$, and we have a Legendrian $\left(p,q\right)$
knot $L$ in $S$. Again, by a diffeomorphism of $S$, we can assume
that $slope\left(\varGamma_{\partial S}\right)=-\nicefrac{r}{s}$
where $-\infty\leq-\nicefrac{r}{s}\leq-1$, that the meridional slope
is $-\infty$. Let $n=\left\lceil \nicefrac{r}{s}\right\rceil $.
If $\left|\varGamma_{\partial S}\right|=2k>2$, then we can attach
a bypass to $\partial S$ along a Legendrian ruling curve to obtain
a smaller solid torus $S'\subset S$ which has $slope\left(\varGamma_{\partial S'}\right)=-\nicefrac{r}{s}$
and $\left|\varGamma_{\partial S'}\right|=2k-2$. We can repeat this
procedure until we have a solid torus $\widetilde{S}\subset S$ which
has $slope\left(\varGamma_{\partial\widetilde{S}}\right)=-\nicefrac{r}{s}$
and $\left|\varGamma_{\partial\widetilde{S}}\right|=2$. Notice that
the contact structure on $\widetilde{S}$ is just $\left.\xi\right|_{\widetilde{S}}$.
If we look at a meridional disk $D\subset S$, we know that along $\partial D$ there are $2sk$ intersection
points with $\varGamma_{\partial S}$, however there exists a slope
$\gamma$ for which, curves on $\partial S$ of slope $\gamma$ have
exactly $2k$ intersection points with $\varGamma_{\partial S}$.
For convenience, change coordinates on $S$ so that $slope\left(\gamma\right)\mapsto-\infty$
and $slope\left(\varGamma_{\partial S}\right)\mapsto0$. 
Notice that we have a $T^{2}\times I$ layer $X=S\setminus\widetilde{S}$,
and we can find a convex annulus $A$ in $X$ with Legendrian boundary of slope $\gamma$. We would like to show that the
contact structure on $X$ is completely determined by the dividing
curves on $A$. Since $\left|\varGamma_{\partial\widetilde{S}}\right|=2$,
$\left|\varGamma_{\partial S}\right|=2k$, and $slope\left(\varGamma_{\partial\widetilde{S}}\right)=slope\left(\varGamma_{\partial S}\right)=0$,
we know that the dividing curves on $A$ must have the form shown
in Figure~\ref{X}~(b) by the green arcs.

\begin{figure}

\begin{overpic}[unit=1mm,scale=1]{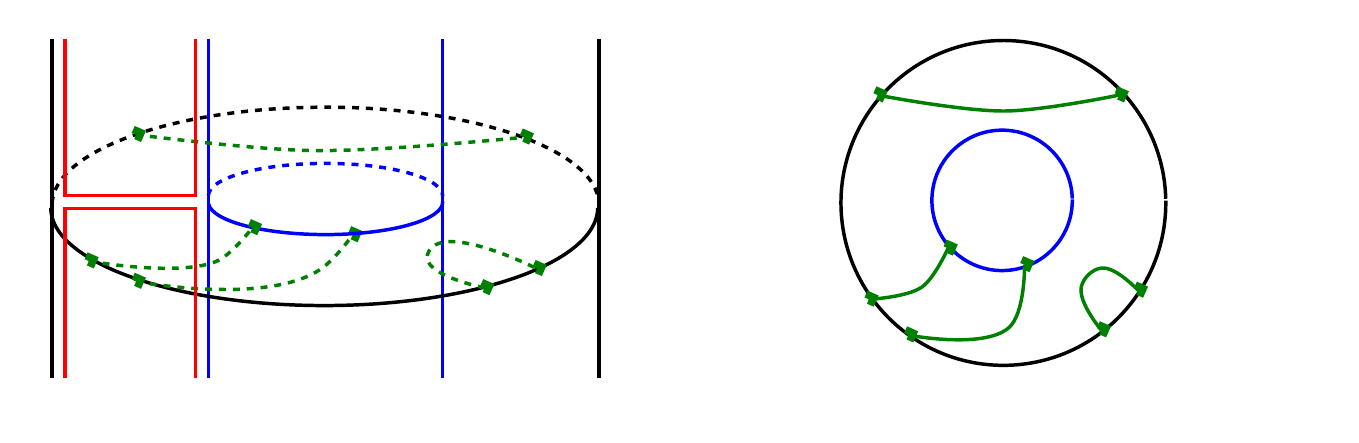}
\put(56,38){\colorbox{white}{\parbox{0.1cm}{%
    $S$}}}
\put(50,23){\colorbox{white}{\parbox{0.1cm}{%
    $A$}}}
\put(111,23){\colorbox{white}{\parbox{0.1cm}{%
    $A$}}}
\put(30,1){\colorbox{white}{\parbox{0.1cm}{%
    $(a)$}}}
\put(99,1){\colorbox{white}{\parbox{1.4cm}{%
    $(b)$}}}
\put(10,36){\colorbox{white}{\parbox{0.1cm}{%
    \color{myred}$B$}}}
\end{overpic}

\caption{On the left, $X=T^{2}\times I$, and on the right, the annulus $A$
and its dividing curves.}
\label{X}

\end{figure}
We know from Giroux \cite{key-8} that the contact structure on a
neighborhood of $A$ is determined by its dividing curves. If we cut
$X$ along $A$, and round corners, we obtain a solid torus $Y$ with convex boundary. Using
the edge rounding lemma \cite{key-5}, it is easy to see that $\left|\varGamma_{\partial Y}\right|=2$
and $slope\left(\varGamma_{\partial Y}\right)=-1$. Notice in Figure~\ref{X}~(a)
that we have a meridional disk $B$ of $Y$ which we have just seen
has $tw\left(\partial B\right)=-1$, and which we can perturb to be
convex. There is a unique choice of dividing curves on such a disk.
Finally, if we cut $Y$ along $B$ and round corners, we obtain a
$B^{3}$ with convex boundary, which has a unique tight contact structure
from work of Eliashberg \cite{key-13}. So we have seen that the contact structure
of $X$ is determined solely by the dividing curves on $A$. 

Let $v$ be a vector field on $A$ that guides the characteristic
foliation. We can label the regions in $A\setminus\varGamma$ as either
$\varSigma^{+}$ or $\varSigma^{-}$ so that no adjacent pair share
the same label. There exists an area form $\omega$ on $A$ which
satisfies that $\pm div_{\omega}v>0$ on $\varSigma^{\pm}$. Assign
a 1-form $\lambda=\iota_{v}\omega$, then we know from Giroux \cite{key-8}
that there exists a function $u:A\rightarrow\mathbb{R}$ such that
$udt+\lambda$ gives rise to a contact structure $\xi$ on $A\times\mathbb{R}$
that is invariant in the $\mathbb{R}$ direction. Moreover, we know
from a theorem of Giroux that $\xi$ is tight, since there are no
homotopically trivial dividing curves. This invariance means that
we can mod out by $\mathbb{Z}$ to obtain a tight contact structure
on $\nicefrac{A\times\mathbb{R}}{\mathbb{Z}}=T^{2}\times I$. The
$T^{2}\times I$ layer and contact structure we obtain in this way
are contactomorphic to our original $\left(X,\xi\right)$, that is,
there exists $v,\omega$ and $u:A\rightarrow\mathbb{R}$ for which
this construction exactly reproduces $\left(X,\xi\right)$.

Now observe that we can smoothly extend $A$, abstractly, by an annulus $\widehat{A}$
causing the number of dividing curves to be reduced to $2$, just as we did
with the disk in the proof of Lemma~\ref{lem:Let-improved}, see
Figure~\ref{larger annulus}. 

\begin{figure}

\begin{overpic}[unit=1mm,scale=1]{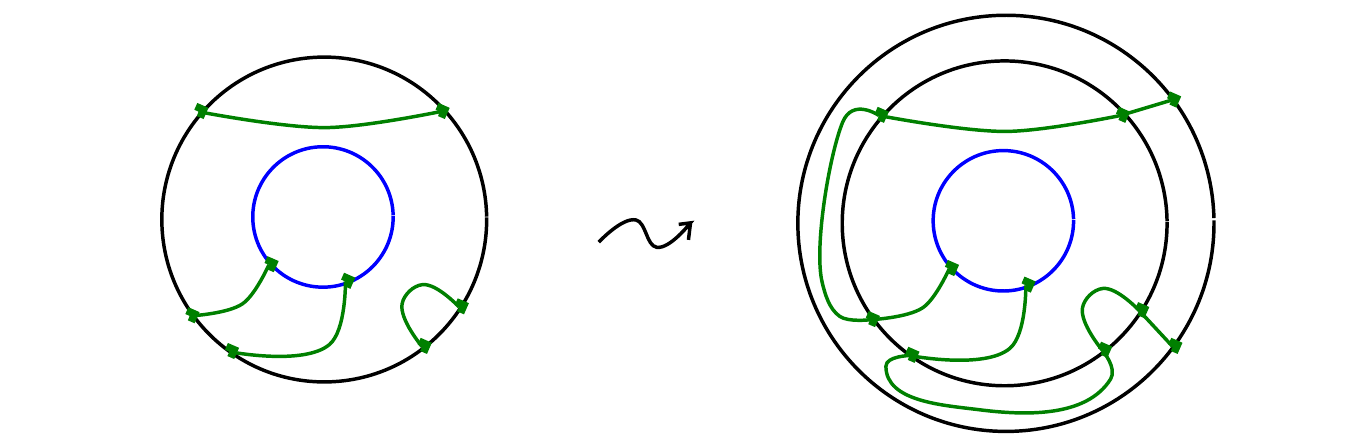}
\put(42,23){\colorbox{white}{\parbox{0.1cm}{%
    \footnotesize$A$}}}
\put(111,23){\colorbox{white}{\parbox{0.1cm}{%
    \footnotesize$A$}}}
\put(119,23){\colorbox{white}{\parbox{0.1cm}{%
    \footnotesize$\widehat{A}$}}}
\end{overpic}

\caption{Reducing the number of dividing curves on $A$ by extending with an
annulus $\widehat{A}$}
\label{larger annulus}

\end{figure}

We can arrange that the extension of $\varGamma_{A}$ to $\varGamma_{A\cup\widehat{A}}$
is smooth, and we can also smoothly extend $\omega$ and $v$ to a
neighborhood of $\widehat{A}$ so that the singular foliation on $\widehat{A}$
guided by $v$ has $\varGamma_{A\cup\widehat{A}}$ as a set of dividing
curves.  We can now build, just
as we did above, a contact structure $\widehat{\xi}$ on $\left(A\cup\widehat{A}\right)\times S^{1}=\widehat{X}$
with convex boundary. Since $\left|\varGamma_{A\cup\widehat{A}}\right|=2$
we see that $tb\left(\partial\widehat{A}\cap\partial\widehat{X}\right)=-1$,
which implies that $\left|\varGamma_{\partial\widehat{X}}\right|=2$.
Notice that $\left.\widehat{\xi}\right|_{X}=\xi$. Also notice that,
by construction, the method of reducing the number of dividing curves
on $\partial X$ yields $slope\left(\varGamma_{\partial X}\right)=slope\left(\varGamma_{\partial\widehat{X}}\right)$.
But now we have a minimally twisting $T^{2}\times I$ layer $\widehat{X}$
whose boundary tori each have two dividing curves with slope $0$.
Honda showed \cite{key-5} that there are an integers worth of tight
contact structures satisfying these boundary conditions, and that
each one is $I$-invariant. Adding the $I$-invariant thickened torus
$X\cup\widehat{X}$ to $\widetilde{S}$ we get a new solid torus with
contact structure contactomorphic to $\left.\xi\right|_{\widetilde{S}}$,
thus universally tight. Clearly $S$ is contained in this solid torus.
Now by Lemma~\ref{virtually overtwisted}, $L$ has $tw\left(L;\partial S\right)=tw\left(L;\partial S'\right)\leq0$.
\textifsymbol[ifgeo]{48}

\section{Building Ribbon Knots from Canceling Handles}

We are concerned here with ribbon knots, which we take to be the following: 

A knot $K\subset S^{3}$ is a \textit{ribbon knot} if there is an
immersed disk $\varphi:D_{K}^{2}\rightarrow S^{3}$ such that 
\begin{enumerate}
\item $\partial\varphi\left(D_{K}\right)=K$
\item all of the double points of $\varphi\left(D_{K}\right)=\widetilde{D}_{K}$
(we will use the symbol $\sim$ to denote image under $\varphi$)
occur transversely along arcs $\gamma_{i}\subset S^{3}$ whose pre-image
$\varphi^{-1}\left(\gamma_{i}\right)\subset D_{K}$ consists of exactly
two arcs. One of these, $\alpha_{i}$, must be contained entirely
in the interior, $\alpha_{i}\subset int\left(D_{K}\right)$, and the
other, $\beta_{i}$ (meant to suggest boundary), must be a properly
embedded arc in $D_{K}$ (i.e. $\partial\beta_{i}\subset\partial D_{K}$
and $int\left(\beta_{i}\right)\cap\partial D_{K}=\emptyset$). 
\end{enumerate}
An example ribbon disk and its image under $\varphi$ are shown in
Figure~\ref{ribbon disk}.

\begin{figure}[h]

\begin{overpic}[unit=1mm,scale=1]{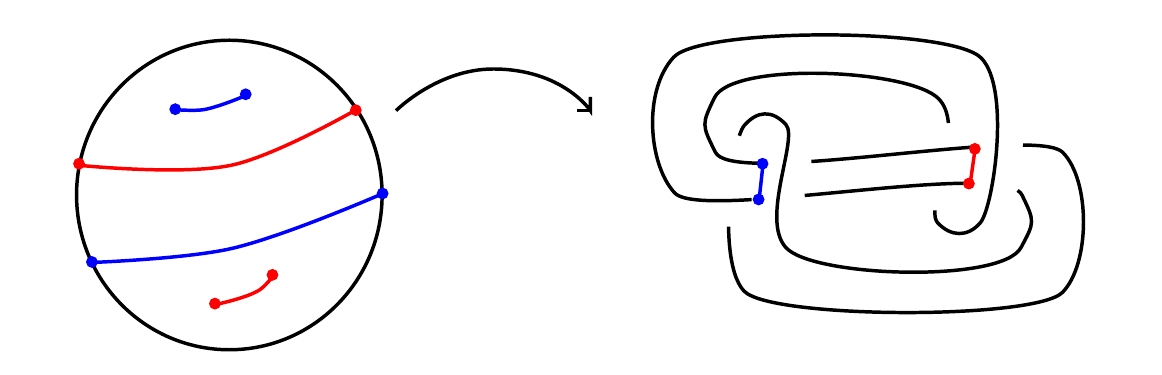}
\put(4,31){\colorbox{white}{\parbox{0.1cm}{%
    $D_{K}$}}}
\put(47,33){\colorbox{white}{\parbox{0.1cm}{%
    $\varphi$}}}
\put(102,31){\colorbox{white}{\parbox{1.4cm}{%
    $K\subset S^{3}$}}}
\put(29,9){\colorbox{white}{\parbox{0.1cm}{%
    \color{myred}$\alpha_{2}$}}}
\put(0,20){\colorbox{white}{\parbox{0.1cm}{%
    \color{myred}$\beta_{2}$}}}
\put(91,24){\colorbox{white}{\parbox{0.1cm}{%
    \color{myred}$\gamma_{2}$}}}
\put(26,27){\colorbox{white}{\parbox{0.1cm}{%
    \color{myblue}$\alpha_{1}$}}}
\put(2,9){\colorbox{white}{\parbox{0.1cm}{%
    \color{myblue}$\beta_{1}$}}}
\put(68,14){\colorbox{white}{\parbox{0.1cm}{%
    \color{myblue}$\gamma_{1}$}}}
\end{overpic}
\caption{The immersion of a ribbon disk $D_{K}$}
\label{ribbon disk}

\end{figure}

Note that by transversality, the pre-images of the $\gamma_{i}'s$
are 1-dimensional sub-manifolds of the compact manifold $D_{K}$,
so there are only finitely many ribbon singularities $\gamma_{i}$.

We want to give a construction of an arbitrary ribbon knot using 1-2
handle canceling pairs. Given any ribbon knot $K\subset S^{3}$,
it has a ribbon disk $D_{K}$ by the definition. Notice, every ribbon
singularity, $\gamma_{i}$, must appear exactly twice on the ribbon
disk, once as a properly embedded arc, and once as an arc contained
entirely in the interior of $D_{K}$. We will use a common color when
picturing these pairs. So a general ribbon disk might look something
like the one seen in Figure~\ref{general ribbon disk}.

\begin{figure}

\includegraphics{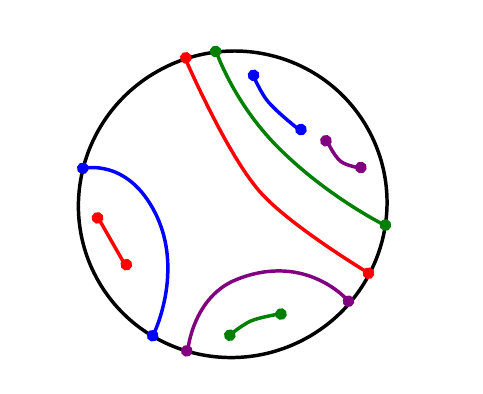}

\caption{A general ribbon disk example}
\label{general ribbon disk}

\end{figure}

We will want to make cuts, $c_{j}$, by pushing off two parallel copies
of an arc in $D_{K}$ and removing a small $\epsilon$-strip. The
result of this cut is shown in Figure~\ref{cuts}. 

\begin{figure}

\begin{overpic}[unit=1mm,scale=1]{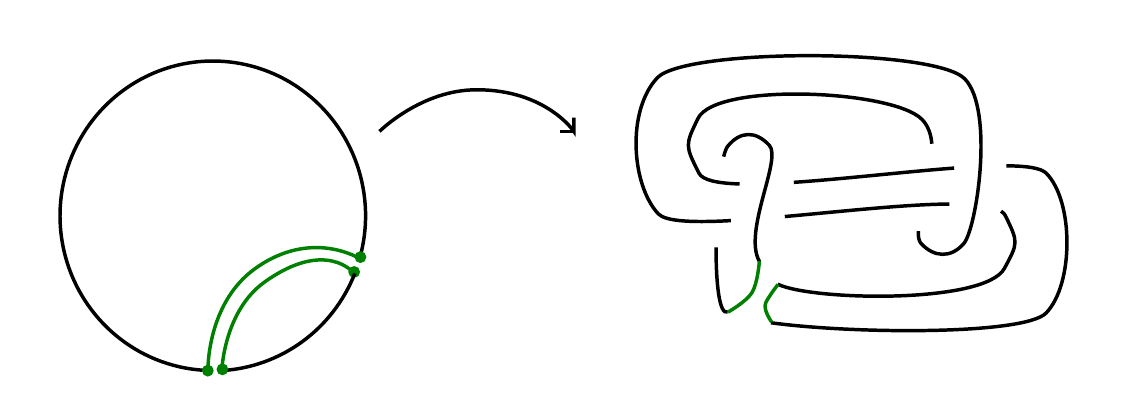}
\put(46,34){\colorbox{white}{\parbox{0.1cm}{%
    $\varphi$}}}
\put(72,5){\colorbox{white}{\parbox{0.1cm}{%
    \color{mygreen}$c_{j}$}}}
\end{overpic}
\caption{Cutting a ribbon disk along an arc $c_{j}$}
\label{cuts}

\end{figure}

We also need to set up a tool for manipulating ribbon disks and their
images. Suppose we have an arc $b\subset\partial D_{K}$ whose end-points
are the end-points of one of our $\beta's$. Further suppose that
the subdisk $D$ they bound contains no other singular points as in
Figure~\ref{subdisk}. 

\begin{figure}

\begin{overpic}[unit=1mm,scale=1]{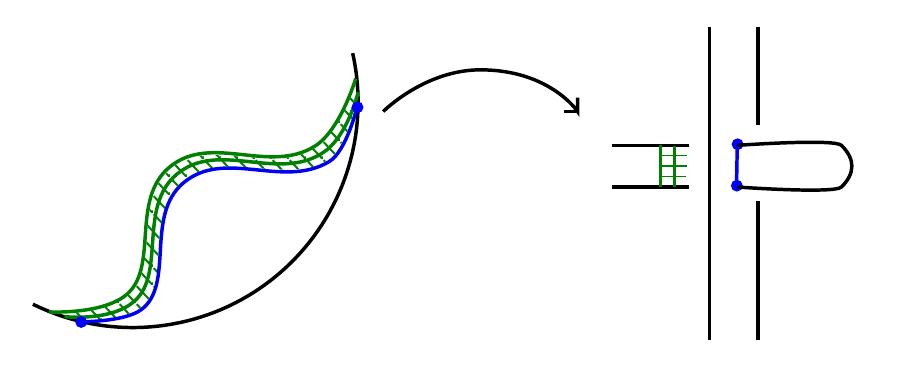}
\put(46,32){\colorbox{white}{\parbox{0.1cm}{%
    $\varphi$}}}
\put(26,30){\colorbox{white}{\parbox{0.1cm}{%
    $D_{K}$}}}
\put(21,14){\colorbox{white}{\parbox{0.1cm}{%
    $D$}}}
\put(25,3){\colorbox{white}{\parbox{0.1cm}{%
    \footnotesize$b$}}}
\put(87,20){\colorbox{white}{\parbox{0.1cm}{%
    \footnotesize$\varphi (b)$}}}
\put(10,20){\colorbox{white}{\parbox{0.1cm}{%
    \color{mygreen}$N$}}}
\put(17,8){\colorbox{white}{\parbox{0.1cm}{%
    \color{myblue}\footnotesize$\beta$}}}
\put(76,21){\colorbox{white}{\parbox[t][0.01cm][t]{0.01cm}{%
    \color{myblue}\tiny$\varphi (\beta)$}}}
\end{overpic}
\caption{A sub-disk and collar neighborhood, and its immersed image under $\varphi$.}
\label{subdisk}

\end{figure}

Let $N=I\times\left[0,\epsilon\right]$ be a collar neighborhood of
$\beta$ in $D_{K}$ such that $\left(t,0\right)=\beta$. We can form
a new disk $D_{\epsilon}=D\cup N$ with boundary
\[
\partial D_{\epsilon}=b\cup\left(0,s\right)\cup\left(1,s\right)\cup\left(t,\epsilon\right)
\]
and notice that $int\left(\beta\right)\subset int\left(D_{\epsilon}\right)$.
By choosing $\epsilon>0$ sufficiently small, we can assume that $\widetilde{D}_{\epsilon}$
is embedded. Then we can see that $D_{\epsilon}$ guides an isotopy,
supported in a small neighborhood of $D_{K}$, taking $b$ to $\left(t,\epsilon\right)$
so that the disk $\overline{D_{K}-D_{\epsilon}}=D_{K}'$ does not
contain $\beta$. We will refer to such a move as a \textit{disk slide}.
Figure~\ref{disk slide} shows a typical disk slide. 

\begin{figure}

\begin{overpic}[unit=1mm,scale=1]{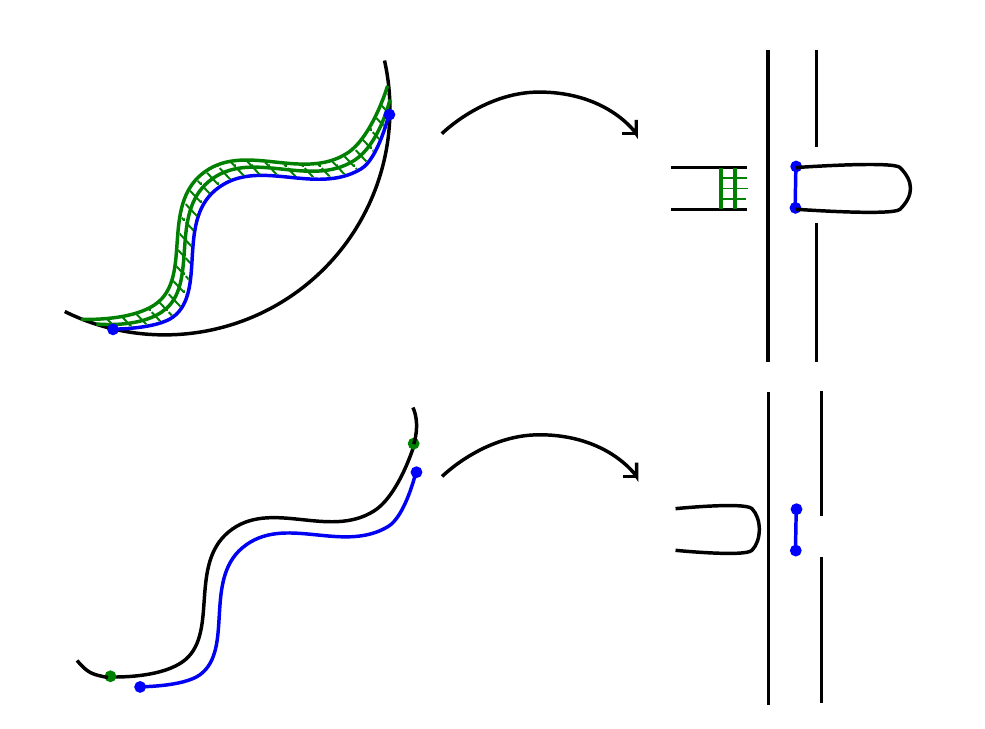}
\put(52,67){\colorbox{white}{\parbox{0.1cm}{%
    $\varphi$}}}
\put(52,32){\colorbox{white}{\parbox{0.1cm}{%
    $\varphi$}}}
\put(26,67){\colorbox{white}{\parbox{0.1cm}{%
    $D_{K}$}}}
\put(26,30){\colorbox{white}{\parbox{0.1cm}{%
    $D_{K}$}}}
\put(23,49){\colorbox{white}{\parbox{0.1cm}{%
    $D$}}}
\put(20,43){\colorbox{white}{\parbox{0.1cm}{%
    \color{myblue}\footnotesize$\beta$}}}
\put(82,56){\colorbox{white}{\parbox[t][0.01cm][t]{0.01cm}{%
    \color{myblue}\tiny$\varphi (\beta)$}}}
\put(23,8){\colorbox{white}{\parbox{0.1cm}{%
    \color{myblue}\footnotesize$\beta$}}}
\put(84,19){\colorbox{white}{\parbox{0.1cm}{%
    \color{myblue}\tiny$\varphi (\beta)$}}}
\end{overpic}
\caption{An illustration of a disk slide.}
\label{disk slide}

\end{figure}

\begin{thm}
\label{thm:cutting to get disks}Given an arbitrary ribbon knot $K\subset S^{3}$
with $n\in\mathbb{N}$ ribbon singularities, $\gamma_{i}$, we can
make $n-1$ or less cuts, $c_{j}$, so that what remains of $K$ is
an unlink, and what remains of $\widetilde{D}_{K}$ is, after $n$
or less disk slides, embedded. That is, it is a collection of disjoint
disks. 
\end{thm}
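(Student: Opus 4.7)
The plan is to induct on $n$, using the combinatorics of the $\beta_i$'s. The $n$ properly embedded arcs $\beta_1,\ldots,\beta_n$ are pairwise disjoint in $D_K$ and hence decompose it into $n+1$ subdisks forming the vertices of a dual tree $T$ whose edges are the $\beta_i$. Each interior arc $\alpha_j$ lies in a unique subdisk, defining an assignment $\phi\colon\{1,\ldots,n\}\to V(T)$. The base cases $n=0,1$ are immediate: when $n=1$ there are two subdisks and a single $\alpha$, so one subdisk is $\alpha$-free and a single disk slide removes $\beta_1$ using zero cuts.

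For $n\geq 2$ I split into two cases. If some leaf $v_0$ of $T$ has $\phi^{-1}(v_0)=\emptyset$, then $v_0$ is bounded by a single $\beta_{i_0}$ and a boundary arc $b\subset\partial D_K$ and contains no ribbon arcs in its interior, so the disk slide of Figure~\ref{disk slide} applies verbatim to remove $\beta_{i_0}$, reducing the singularity count to $n-1$; induction then finishes with $\leq n-2$ further cuts and $\leq n-1$ further slides. Otherwise every leaf contains some $\alpha$, and I pick a leaf $v_0$ and some $\alpha_{j_0}\in v_0$ and make a single cut $c$ with endpoints on $b=\partial v_0\cap\partial D_K$ separating all $\alpha$'s lying in $v_0$ from $\beta_{i_0}$. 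The strip removal of Figure~\ref{cuts} splits $D_K$ into a small disk $D_K^A$ (containing $\alpha_{j_0}$) and a disk $D_K^B$ (containing $\beta_{i_0}$); the newly created subdisk $v_0^b\subset D_K^B$ is a leaf of the dual tree of $D_K^B$ and is $\alpha$-free by construction, so a disk slide there removes $\beta_{i_0}$ and resolves $\gamma_{i_0}$ as a double point of $\widetilde{D}_K$.

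The resulting two-component configuration has $n-1$ remaining double-point arcs, and I continue by a suitably strengthened multi-component induction hypothesis applied component by component. Each reduction step uses exactly one disk slide and at most one cut; moreover the very last reduction is necessarily cut-free, because when only one double point $\gamma$ remains its two preimages either lie in the same component (whose two-leaf dual tree has a single $\alpha$, leaving the other leaf $\alpha$-free) or in different components (so the $\beta$-containing component has a two-leaf $\alpha$-free dual tree). Thus at least one of the $n$ reduction steps avoids a cut, giving $\leq n-1$ cuts and $\leq n$ disk slides overall. The main obstacle is formulating the multi-component generalization precisely—allowing a disjoint union of immersed disks with double-point arcs that may pair an $\alpha$ and a $\beta$ across different components—and verifying that in Case~2 the post-cut leaf $v_0^b$ is genuinely $\alpha$-free and that no other ribbon arcs are disturbed by the cut and subsequent slide, so that the counting of cuts and slides does not get thrown off.
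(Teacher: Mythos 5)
Your argument is correct and, modulo phrasing, is the same as the paper's: your tree leaves are precisely the paper's ``outermost'' subdisks, both arguments iterate cut-then-disk-slide, and both get the $n-1$ bound on cuts by observing that the final singularity can always be removed by a slide alone. The multi-component bookkeeping you flag as the ``main obstacle'' is also left informal in the paper, which simply iterates the outermost-arc argument on the disjoint union of disks produced by earlier cuts without spelling out the strengthened hypothesis; so you have, if anything, correctly located the one point where both proofs implicitly invoke a multi-component version of the statement.
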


To prove this we will need the following.
\begin{lem}
\label{removing a singularity}Given a ribbon knot $K$ with $n$
ribbon singularities, if we can find a subdisk $D\subset D_{K}$ such
that 
\[
\partial D=\left(an\;arc\;in\;\partial D_{K}\right)\cup\beta_{i}
\]
for one of our properly embedded arcs, $\beta_{i}$ , and $int\left(D\right)$
is disjoint from all $\alpha's$ and $\beta's$, then a disk slide
gives an isotopy of $K$ supported in a small neighborhood of $D$
so that the new slicing disk $\widetilde{D}_{K}'$ has $n-1$ ribbon
singularities. 
\end{lem}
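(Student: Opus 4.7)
My plan is to carry out the disk slide described in the text and verify, one singularity at a time, that the count changes by exactly $-1$.

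First, I would set up the geometric data supplied by the hypothesis. Pick a collar neighborhood $N \cong I \times [0,\epsilon]$ of $\beta_i$ in $D_K$ with $(t,0) = \beta_i$, lying on the side of $\beta_i$ opposite to $D$, and form $D_\epsilon = D \cup N$. Because $\mathrm{int}(D)$ meets none of the arcs $\alpha_j, \beta_j$, and $\beta_i$ is properly embedded in $D_K$, for $\epsilon > 0$ small enough the immersed image $\varphi(D_\epsilon)$ is embedded: the only potential double points in $\widetilde{D}_K$ occur along the $\gamma_j$, and those are avoided on the interior of $D$ and on a sufficiently thin strip around $\beta_i$. Then $D_\epsilon$ guides an ambient isotopy, supported in an arbitrarily small neighborhood of $D_\epsilon$, that pushes the boundary arc $b \subset \partial D_K \subset K$ across $\beta_i$ to the parallel arc $(t,\epsilon)$, producing a new knot $K'$ isotopic to $K$ and a new immersed disk $\widetilde{D}_K' := \varphi(D_K')$ where $D_K' = \overline{D_K - D_\epsilon}$.

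Next I would count singularities of $\widetilde{D}_K'$. The key point is that $\beta_i$ was contained in $\mathrm{int}(D_\epsilon)$ and hence is not part of $D_K'$, while $\alpha_i$, sitting outside $D$, survives in $D_K'$. Since the original singular arc $\gamma_i = \varphi(\alpha_i) = \varphi(\beta_i)$ arose precisely because both preimages were present, removing $\beta_i$ turns $\varphi(\alpha_i)$ into an embedded interior arc of $\widetilde{D}_K'$; the singularity $\gamma_i$ is gone. For each $j \neq i$, both preimages $\alpha_j$ and $\beta_j$ lie in $D_K \setminus \mathrm{int}(D_\epsilon) = D_K'$, since by hypothesis $\mathrm{int}(D)$ meets no $\alpha_k$ or $\beta_k$ and the collar $N$ is a thin neighborhood of $\beta_i$ only; thus $\gamma_j$ persists as a ribbon singularity of $\widetilde{D}_K'$.

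Finally I would check that no new singularities are introduced. The isotopy is supported in an $\epsilon$-neighborhood of $D_\epsilon$, and the only new piece of $\widetilde{D}_K'$ not already lying in $\widetilde{D}_K$ is the image of the collar $N$, which was chosen thin enough that $\varphi|_{D_\epsilon}$ is an embedding and the isotopy introduces no fresh double arcs. Any potential new self-intersection would have to involve either the pushed-off boundary arc or the interior of $N$, but both sit in a thin neighborhood of $\beta_i$ on the side away from $\alpha_i$, and transversality together with the disjointness hypothesis on $\mathrm{int}(D)$ precludes this. The main subtlety, and the step I would expect to write out most carefully, is this last verification: one must track how $\varphi$ behaves on $D_\epsilon$ to rule out accidental re-creation of a double arc through the pushed copy of $b$, and the argument reduces to choosing $\epsilon$ smaller than the transverse separation between $\beta_i$ and every other $\alpha_k, \beta_k, \gamma_k$ in $D_K$ and $\widetilde{D}_K$.
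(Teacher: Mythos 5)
Your argument is correct and follows essentially the same route as the paper's proof: form the collar $N=I\times[0,\epsilon]$ of $\beta_i$, take $D_\epsilon=D\cup N$ with $\widetilde{D}_\epsilon$ embedded for small $\epsilon$, and use the disk slide to remove $\beta_i$ from $D_K'=\overline{D_K-D_\epsilon}$, so that $\gamma_i$ is no longer a double arc while the other $\gamma_j$ persist. Your final verification that no new singularities appear is slightly more elaborate than needed (the new slicing disk is just the restriction $\varphi|_{D_K'}$ of the original immersion, so its double arcs are a subset of the old ones), but this does not affect correctness.
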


\begin{proof}
For reference, let $b_{i}=\partial D\cap\partial D_{K}$ so that $\partial D=b_{i}\cup\beta_{i}$.
Also, let $N=I\times\left[0,\epsilon\right]$ be a collar neighborhood
of $\beta_{i}$ in $D_{K}$ such that $\left(t,0\right)=\beta_{i}$
similar to the one shown in Figure~\ref{subdisk}.

Then we can define a new subdisk $D_{\epsilon}=D\cup N$ with boundary
$\partial D_{\epsilon}=b_{i}\cup\left(0,s\right)\cup\left(1,s\right)\cup\left(t,\epsilon\right)$
and notice that $int\left(\beta_{i}\right)\subset int\left(D_{\epsilon}\right)$.
By choosing $\epsilon>0$ sufficiently small, we can assume that $\widetilde{D}_{\epsilon}$
is embedded. Then there is a disk slide taking $b_{i}$ to $\left(t,\epsilon\right)$
so that the disk $\overline{D_{K}-D_{\epsilon}}=D_{K}'$ does not
contain $\beta_{i}$. But then it also cannot contain $\alpha_{i}$,
since the pre-images of singularities occur in pairs, and hence the
singularity, $\gamma_{i}$, has been eliminated. We also have that
the resulting knot, $\partial\widetilde{D}_{K}'$, is isotopic to
$K$.
\end{proof}
Notice that Lemma~\ref{removing a singularity} says that if we see
a boundary parallel arc in $D_{K}$ with no other singular points
between that arc and some portion of $\partial D_{K}$, that we can
eliminate that arc and its interior partner from the picture by an
isotopy of $K$. Now back to our general picture and the proof of
Theorem~\ref{thm:cutting to get disks}. 

\subsubsection*{Proof of Theorem~\ref{thm:cutting to get disks}}

We will assume that our ribbon disk is reduced in the sense that,
if it were possible to simplify with a disk slide, then we have done
so already. We will consider Figure~\ref{general ribbon disk} as
our prototypical ribbon disk, and recall the convention that for each
singularity $\gamma_{i}$, $\varphi^{-1}\left(\gamma_{i}\right)$
consists of $\alpha_{i}\cup\beta_{i}$ with $\beta_{i}$ properly
embedded. Given an arbitrary ribbon knot $K\subset S^{3}$ with $n\in\mathbb{N}$
ribbon singularities, $\gamma_{i}$, and ribbon disk $\varphi:D_{K}\rightarrow S^{3}$,
there will always be an ``outermost'' properly embedded arc, $\beta_{i}$.
This means that in some subdisk, $D$, whose boundary is $\beta_{i}$
together with an arc $b_{i}\subset\partial D_{K}$, there are only
interior singular points, $\alpha_{j}$, and no other properly embedded
arcs. \textcompwordmark Figure~\ref{cutting ribbon disk}~(a) shows
one such case. 

\begin{figure}

\begin{overpic}[unit=1mm,scale=1]{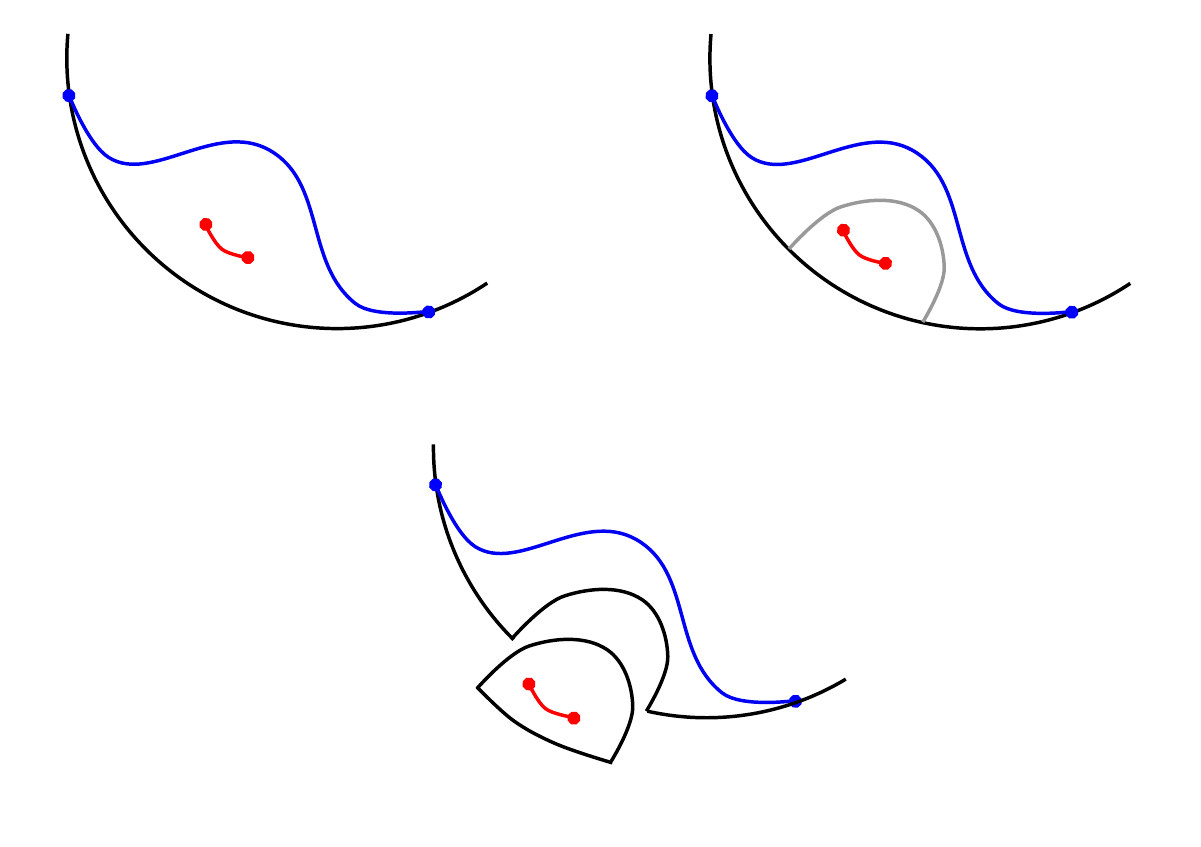}
\put(10,79){\colorbox{white}{\parbox{0.1cm}{%
    $D_{K}$}}}
\put(75,79){\colorbox{white}{\parbox{0.1cm}{%
    $D_{K}$}}}
\put(13,65){\colorbox{white}{\parbox{0.1cm}{%
    $D$}}}
\put(47,37){\colorbox{white}{\parbox{0.1cm}{%
    $D_{K}'$}}}
\put(55,17){\colorbox{white}{\parbox{0.1cm}{%
    $D'$}}}
\put(50,26){\colorbox{white}{\parbox{0.1cm}{%
    $S$}}}
\put(76,58){\colorbox{white}{\parbox{0.1cm}{%
    \color{mygray}\footnotesize$c$}}}
\put(12,56){\colorbox{white}{\parbox{0.1cm}{%
    \footnotesize$b_{i}$}}}
\put(23,74){\colorbox{white}{\parbox{0.1cm}{%
    \color{myblue}\footnotesize$\beta_{i}$}}}
\put(88,74){\colorbox{white}{\parbox{0.1cm}{%
    \color{myblue}\footnotesize$\beta_{i}$}}}
\put(60,35){\colorbox{white}{\parbox{0.1cm}{%
    \color{myblue}\footnotesize$\beta_{i}$}}}
\put(26,58){\colorbox{white}{\parbox{0.1cm}{%
    \color{myred}\tiny$\alpha_{j}$}}}
\put(91,58){\colorbox{white}{\parbox{0.1cm}{%
    \color{myred}\tiny$\alpha_{j}$}}}
\put(59,13){\colorbox{white}{\parbox{0.1cm}{%
    \color{myred}\tiny$\alpha_{j}$}}}
\put(22,47){\colorbox{white}{\parbox{0.1cm}{%
    $(a)$}}}
\put(88,47){\colorbox{white}{\parbox{0.1cm}{%
    $(b)$}}}
\put(59,0){\colorbox{white}{\parbox{0.1cm}{%
    $(c)$}}}
\end{overpic}
\caption{Cutting a ribbon disk}
\label{cutting ribbon disk}

\end{figure}

Let $c$ be a properly embedded arc in $D\subset D_{K}$ such that
$c$ cuts $D$ into $D'\cup S$ with $\beta_{i}\subset S$ and $D'$
containing all arcs $\alpha_{j}\subset D$. We may cut $D_{K}$ along
$c$ so that $\varphi$ is defined on $D_{K}'=\overline{D_{K}-D'}$
and $D'$, and after a small isotopy of $\varphi\mid_{D'}$ we have
that $\varphi\left(D'\right)$ and $\varphi\left(D_{K}'\right)$ are
disjoint as pictured in Figure~\ref{cutting ribbon disk} (c). Then
a disk slide eliminates $\beta_{i}$ by Lemma~\ref{removing a singularity}.
Notice that when we eliminate a particular $\beta_{i}$ using a disk
slide, that automatically eliminates the corresponding $\alpha_{i}$
since they occur in pairs. Also notice, each cut eliminates at least
one $\beta_{i}$, but could allow for the removal of more than one.
But after at most $n-1$ cuts we have at most one $\beta_{j}$ and
its corresponding $\alpha_{j}$. Since $\beta_{j}$ cuts the disk
it sits on into two components, one of them contains no $\alpha$
curves, see Figure~\ref{final iteration}, and so $\beta_{j}$ can
be removed with no further cuts. Thus we never need to make the $n^{th}$
cut since this last $\beta$ curve may be eliminated by a disk slide
without making a cut. Then the image under $\varphi$ is now $n$
embedded disks whose boundary is an unlink. \textifsymbol[ifgeo]{48}

\begin{figure}

\begin{overpic}[unit=1mm,scale=1]{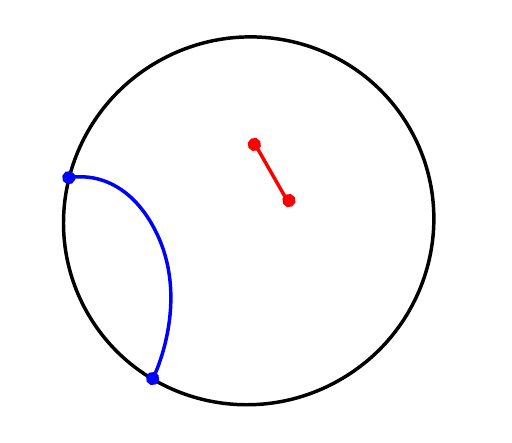}
\put(14,33){\colorbox{white}{\parbox{0.1cm}{%
    $D_{K}'$}}}
\put(30,20){\colorbox{white}{\parbox{0.1cm}{%
    \color{myred}$\alpha_{j}$}}}
\put(10,14){\colorbox{white}{\parbox{0.1cm}{%
    \color{myblue}$\beta_{j}$}}}
\end{overpic}
\caption{Final iteration}
\label{final iteration}
\end{figure}

We remark that this gives an upper bound on the number of cuts needed,
but there are certainly cases where this number is not optimal as
the following example shows.
\begin{example}
Consider the ribbon knot in Figure~\ref{single cut}.

\begin{figure}

\begin{overpic}[unit=1mm,scale=1]{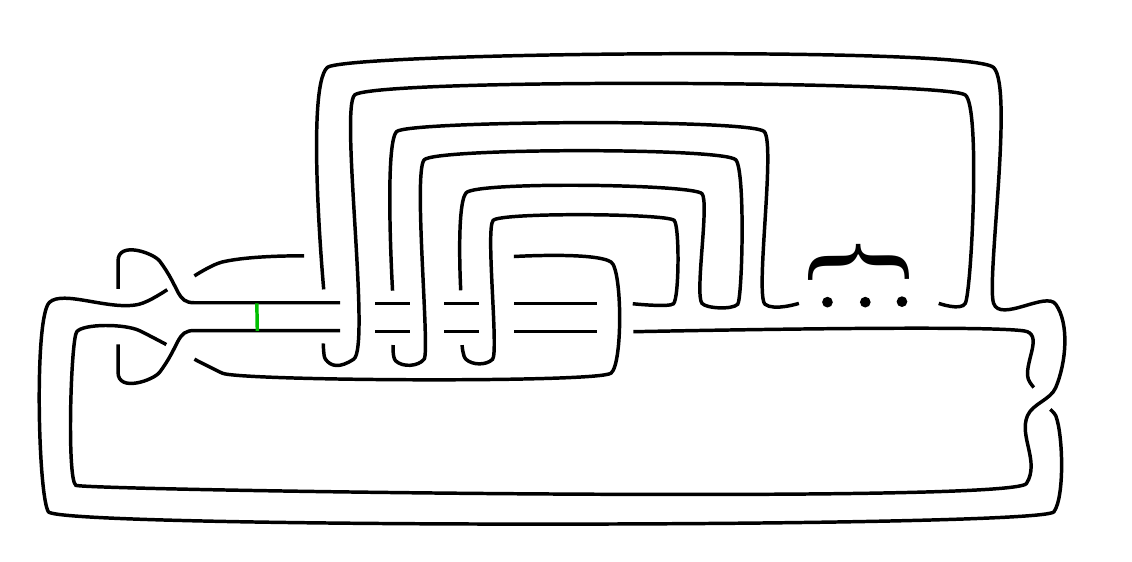}
\put(81,34){\colorbox{white}{\parbox{1.2cm}{%
    \tiny$n-times$}}}
\end{overpic}
\caption{An example ribbon knot with $n+2$ singularities for which a single
cut suffices.}
\label{single cut}

\end{figure}

This knot has $n+2$ ribbon singularities for any $n\in\mathbb{N}$,
and yet only one cut (shown in green) will reduce the picture to two
disjoint disks. 
\end{example}

Now we will introduce handles and obtain a Kirby picture in which
our knot $K$ takes a particularly simple form. We assume that the
reader is familiar with basic handlebody theory; an excellent reference
for this material is~\cite{key-4}. For every cut $c_{j}$, we will
attach an arc $h_{j}$ seen in Figure~\ref{2-handle for cut}. We
will think of $h_{j}$ as a thin ribbon, which would recover $K$
if glued along. For this reason we will give the arc, $h_{j}$, a
framing, by which we mean a parallel arc, and keep track of this framing
through any isotopies of $K$. By a \textit{1-sub-handlebody}, we
will mean the sub-handlebody consisting of the 0-handles and the 1-handles. 

\begin{figure}

\begin{overpic}[unit=1mm,scale=1]{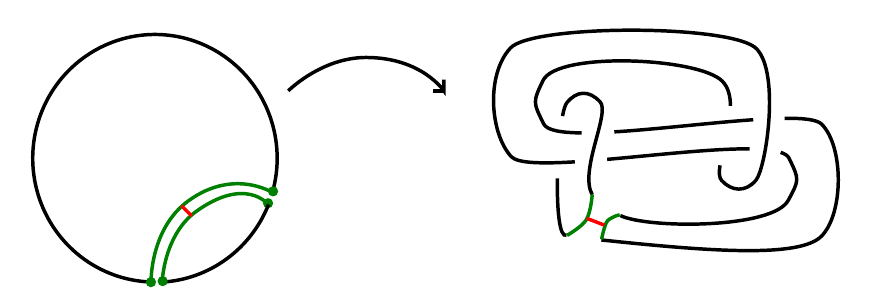}
\put(35,27){\colorbox{white}{\parbox{0.1cm}{%
    $\varphi$}}}
\put(54,3){\colorbox{white}{\parbox{0.1cm}{%
    \color{mygreen}\tiny$c_{j}$}}}
\put(61,11){\colorbox{white}{\parbox{0.1cm}{%
    \color{myred}\tiny$h_{j}$}}}
\end{overpic}
\caption{A 2-handle $h_{j}$ associated to a cut $c_{j}$}
\label{2-handle for cut}

\end{figure}

\subsubsection*{Proof of Theorem~\ref{unknot theorem } }

Using Theorem~\ref{thm:cutting to get disks}, we can make $k<n$
cuts to the ribbon disk to obtain the unlink. So we have a diagram
in which there are $k$ disjoint disks, and $k-1$ framed arcs $h_{j}$.
We know that by taking a band sum along these arcs (paying attention
to framings) we can recover our diagram for $K$. Let $K_{cut}$ be
the union of the boundaries of these disks. Now in a small neighborhood
of the end points of each $h_{j}$ we insert the attaching spheres
of a $1-$handle, letting $h_{j}$ be the attaching circle of a $2-$handle
as seen in Figure~\ref{1-2 handles}. 

\begin{figure}

\begin{overpic}[unit=1mm,scale=1]{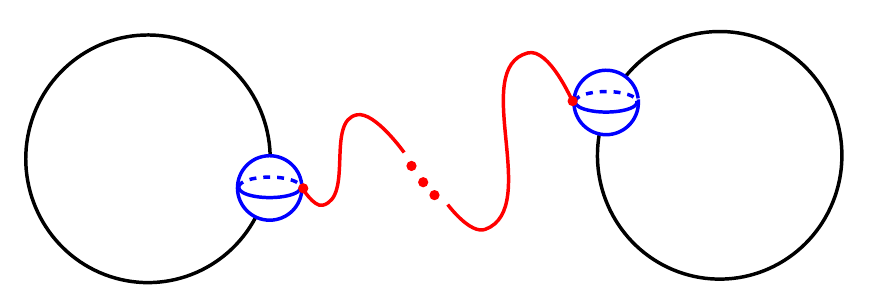}
\put(14,23){\colorbox{white}{\parbox{0.1cm}{%
    \footnotesize$K_{cut}$}}}
\put(70,23){\colorbox{white}{\parbox{0.1cm}{%
    \footnotesize$K_{cut}$}}}
\put(45,16){\colorbox{white}{\parbox{0.1cm}{%
    \color{myred}\footnotesize$h_{j}$}}}
\end{overpic}
\caption{A 1-2 handle canceling pair}
\label{1-2 handles}

\end{figure}

This pair cancels by construction, and also has the effect of doing
the band sum that recovers $K$ for the cut $c_{j}$ as seen in the
movie in Figure~\ref{movie}. Notice that we make two handle slides
that free $K_{cut}$ from the $1-$handle, and then cancel the pair.
Also notice that this has exactly the same effect that a band sum
of $K_{cut}$ along $h_{j}$ would have had. 

\begin{figure}

\begin{overpic}[unit=1mm,scale=1]{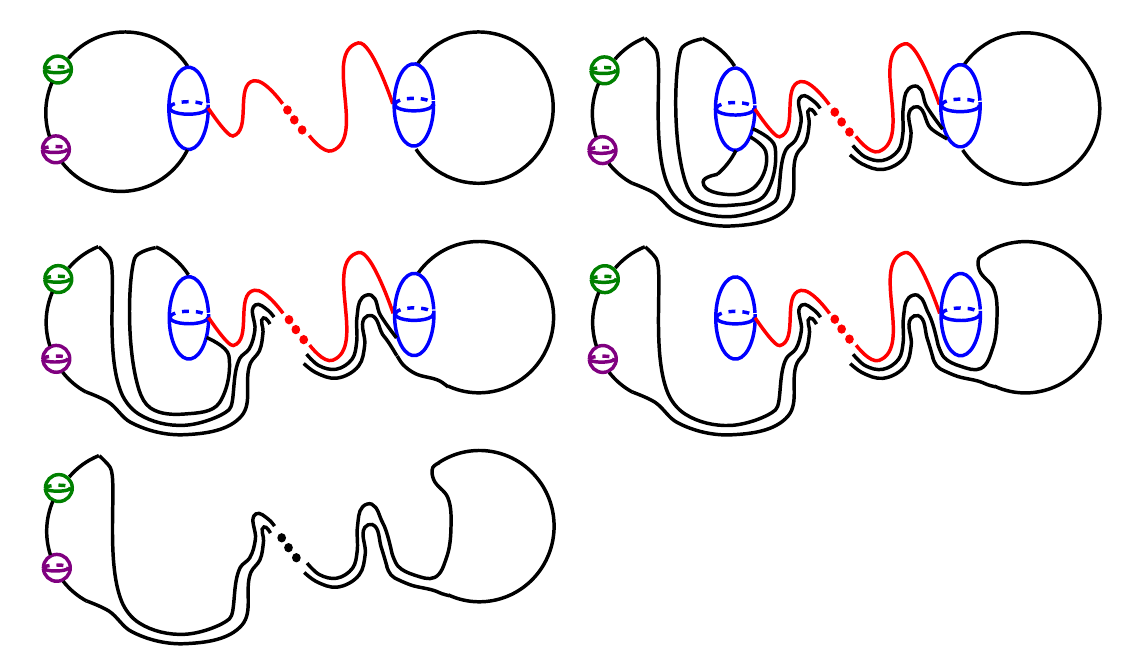}
\put(9,60){\colorbox{white}{\parbox{0.1cm}{%
    \tiny$K_{cut}$}}}
\put(45,60){\colorbox{white}{\parbox{0.1cm}{%
    \tiny$K_{cut}$}}}
\put(29,60){\colorbox{white}{\parbox{0.1cm}{%
    \color{myred}\tiny$h_{j}$}}}
\put(85,60){\colorbox{white}{\parbox{0.1cm}{%
    \color{myred}\tiny$h_{j}$}}}
\put(29,38){\colorbox{white}{\parbox{0.1cm}{%
    \color{myred}\tiny$h_{j}$}}}
\put(85,38){\colorbox{white}{\parbox{0.1cm}{%
    \color{myred}\tiny$h_{j}$}}}
\put(26,3){\colorbox{white}{\parbox{1.8cm}{%
    \tiny$h_{j}-framing$}}}
\end{overpic}
\caption{An example handle cancellation to recover $K$.}
\label{movie}

\end{figure}

There is no obstruction to this handle slide and cancellation caused
by the possible presence of other handle pairs, since the double band
sum shown on the left can be carried out in a small neighborhood of
the attaching sphere on the left. So after $n-1$ or less iterations,
we have recovered our diagram for $K$. It is worth noting that framings
on 2-handles denote an even number of half twists, therefore the framings
on the $h_{j}$ must be even. If our diagram for $K$ requires an
odd number of half twists then we can accommodate this by inserting
any number of half twists in one of the disks spanning $K_{cut}$
shown in figure Figure~\ref{2-handle adjustment} for the case of
a single half twist.

\begin{figure}

\begin{overpic}[unit=1mm,scale=1]{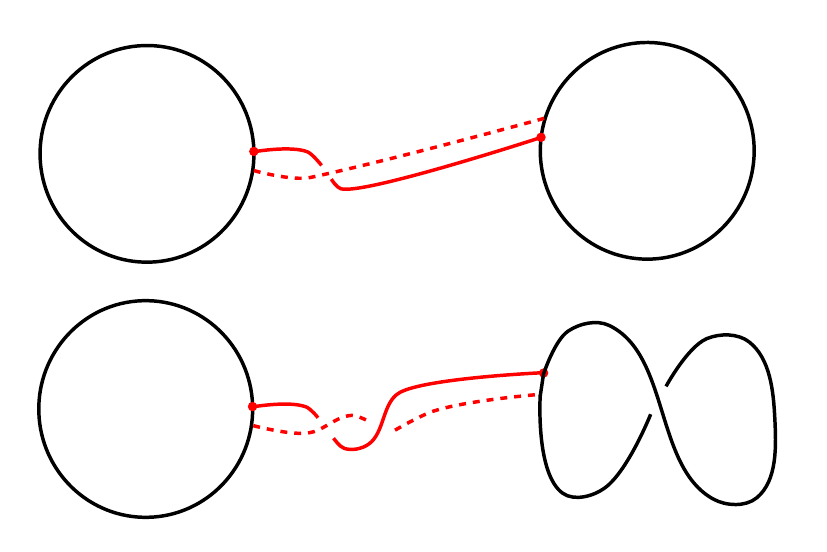}
\put(10,46){\colorbox{white}{\parbox{0.1cm}{%
    \footnotesize$K_{cut}$}}}
\put(65,46){\colorbox{white}{\parbox{0.1cm}{%
    \footnotesize$K_{cut}$}}}
\put(9,20){\colorbox{white}{\parbox{0.1cm}{%
    \footnotesize$K_{cut}$}}}
\put(69,24){\colorbox{white}{\parbox{0.1cm}{%
    \footnotesize$K_{cut}$}}}
\put(29,43){\colorbox{white}{\parbox{0.1cm}{%
    \color{myred}\footnotesize$h_{j}$}}}
\put(29,18){\colorbox{white}{\parbox{0.1cm}{%
    \color{myred}\footnotesize$h_{j}$}}}
\end{overpic}
\caption{Framing adjustment}
\label{2-handle adjustment}

\end{figure}

We would like to think of our diagram in which there are $k$ disjoint
disks connected by $k-1$ arcs, $h_{j}$, abstractly as a graph in
order to show that $K_{cut}$ can be pulled free of the 1-handles.
To do this, we first work in the boundary of the 1-sub-handlebody.
We think of each of our disjoint disks as a vertex, and put an edge
between vertices if the corresponding disks are joined by a 1-handle.
Notice $G$ embeds in $D_{K}$ as the ``dual'' graph to $D_{K}$
cut along $\varphi^{-1}\left(c_{j}\right)$, that is, there is a vertex
in the center of each component of $D_{K}-\cup_{j=1}^{k-1}\varphi^{-1}\left(c_{j}\right)$
and an edge for each $\varphi^{-1}\left(c_{j}\right)$. Then $G$
is homotopy equivalent to $D_{K}$, and so we see that $\chi\left(G\right)=\chi\left(D_{K}\right)=1$.
It is well known that the Euler characteristic of a connected graph
is one if and only if that graph is a tree, so $G$ is a tree. Each
uni-valent vertex of $G$ is now associated to a portion of our picture
consisting of two disks connected by a $1-$handle, where, one disk
might have many $1-$handle attaching spheres, but the other must
have exactly one $1-$handle attaching sphere as shown in Figure~\ref{uni-valent vertex}.
In the 1-sub-handlebody it is clear that $K_{cut}$ may be isotoped
off this 1-handle. Notice that the effect of this isotopy on $G$
is to remove the corresponding edge and uni-valent vertex from the
graph. Since $G$ is a tree, we can iterate this procedure revealing
that $K_{cut}$ can be pulled completely free of the 1-handles. This
may be seen in Figure~\ref{2-handlebody} by simply ignoring the
attaching circles of the 2-handles, $h_{j}$. 

\begin{figure}

\begin{overpic}[unit=1mm,scale=1]{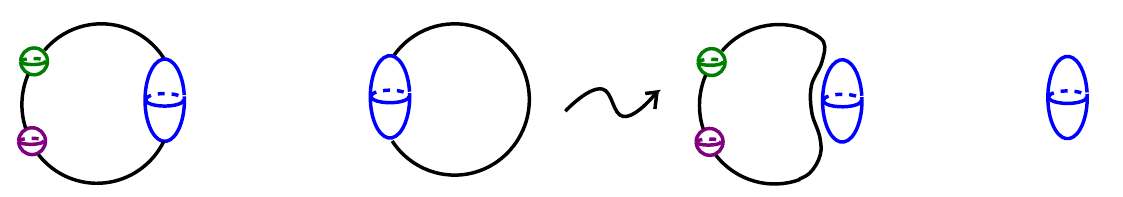}
\put(6,13){\colorbox{white}{\parbox{0.1cm}{%
    \tiny$K_{cut}$}}}
\put(43,13){\colorbox{white}{\parbox{0.1cm}{%
    \tiny$K_{cut}$}}}
\put(75,13){\colorbox{white}{\parbox{0.1cm}{%
    \tiny$K_{cut}$}}}
\end{overpic}
\caption{Handle picture corresponding to a uni-valent vertex of $G$.}
\label{uni-valent vertex}
\end{figure}

The above iteration gives an isotopy of $K_{cut}$ which extends to
an ambient isotopy of the boundary of the 1-sub-handlebody. This,
in turn, induces an isotopy on the attaching circles of the 2-handles,
$h_{j}$, resulting in a 2-handlebody as claimed in Theorem~\ref{unknot theorem }.
See Figure~\ref{2-handlebody}. By construction, handle slides and
cancellations gives us a knot isotopic to $K\subset S^{3}$. \textifsymbol[ifgeo]{48}

\begin{figure}

\begin{overpic}[unit=1mm,scale=1]{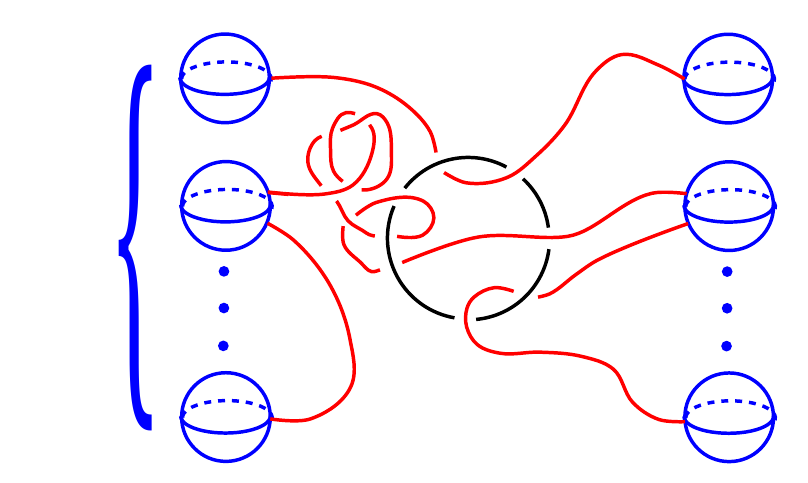}
\put(39,14){\colorbox{white}{\parbox{0.1cm}{%
    $K$}}}
\put(-7,23){\colorbox{white}{\parbox{1.6cm}{%
    \color{myblue}\tiny$n-1\;or\;less$}}}
\end{overpic}
\caption{A 2-handlebody picture where $K$ appears as the unknot in the boundary
of the 1-sub-handlebody.}
\label{2-handlebody}
\end{figure}

So we have shown that any ribbon knot with $n$ ribbon singularities
may be constructed by starting with the unknot in $\underset{k}{\#}S^{1}\times S^{2}$,
where $k\leq n-1$, and attaching 2-handles to cancel each of the
1-handles in an appropriate manner. 
\begin{example}
It is an exercise in Kirby calculus to show that images in Figure~\ref{example repeated}
are two pictures of the same ribbon knot in $S^{3}$. 
\end{example}

\begin{cor}
In Figure~\ref{2-handlebody}, if we replace the unknot in the 1-sub-handlebody
with a dotted circle, then we obtain a picture of the 4-manifold which
is the complement of the slicing disk in $D^{4}$, shown in Figure~\ref{slice disk complement}.\label{slice disk complement cor}
\end{cor}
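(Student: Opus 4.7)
The plan is to exploit the standard dotted-circle convention in Kirby calculus: a dotted unknot $U \subset S^3 = \partial B^4$ represents a 1-handle, equivalently the operation of deleting an open tubular neighborhood of a properly embedded flat 2-disk in $B^4$ bounded by $U$. Applied to Figure~\ref{2-handlebody} with the unknot component representing $K$ now dotted, the underlying 4-manifold is obtained from $D^4$ by removing a neighborhood of some disk bounded by $K$. The task is to identify that disk with the ribbon slicing disk for $K$.

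First I would observe that, because $K$ is drawn as an unknot in the boundary of the 1-sub-handlebody $Y_1$, it bounds an embedded flat 2-disk $\Delta_0 \subset Y_1$, which we push slightly into the interior. By the construction in the proof of Theorem~\ref{unknot theorem }, the attaching regions of the 2-handles $h_j$ lie elsewhere on $\partial Y_1$ and are disjoint from $\Delta_0$. Consequently $\Delta_0$ persists as a properly embedded disk in the full 2-handlebody of Figure~\ref{2-handlebody}, which is diffeomorphic to $D^4$.

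Next I would trace $\Delta_0$ through the sequence of handle slides and 1-2 handle cancellations that converts the diagram of Figure~\ref{2-handlebody} into the original diagram of the ribbon knot $K$. These moves are realized by an ambient isotopy of the underlying $D^4$ (together with index-1/index-2 cancellations that do not affect the disk), and they perform exactly the band sums along the framed arcs $h_j$ that recover $K$ from the unlink $K_{\mathrm{cut}}$. Since these band sums, carried out simultaneously on the boundaries and interiors of the disjoint disks of which $\Delta_0$ is the relevant one, precisely reassemble the original ribbon disk, the image of $\Delta_0$ in $D^4$ is isotopic rel boundary to the ribbon slicing disk $D_K$.

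Combining the two steps, dotting $K$ in Figure~\ref{2-handlebody} produces a Kirby diagram of $D^4 \setminus \nu(\Delta_0) \cong D^4 \setminus \nu(D_K)$, which is the desired complement of the slicing disk. The one subtle point is verifying that the Kirby moves actually carry the naive flat disk $\Delta_0$ onto the ribbon disk $D_K$; but this is precisely what the construction of Theorem~\ref{unknot theorem } guarantees, since the band sums along the $h_j$ are the exact inverses of the cuts $c_j$ that produced the unlink $K_{\mathrm{cut}}$ from $\widetilde{D}_K$ in the first place.
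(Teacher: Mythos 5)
Your proposal is correct and takes essentially the same route as the paper's proof: you identify the obvious disk bounded by the (now dotted) unknot in the 1-sub-handlebody, argue that the 1-2 handle cancellations carry it to the ribbon slicing disk because they realize exactly the band sums undoing the cuts, and then invoke the dotted-circle convention of deleting a neighborhood of the pushed-in disk. Your write-up merely expands the step the paper states in one sentence, namely that canceling the handle pairs recovers not only $K$ but also the ribbon disk $\widetilde{D}_{K}$.
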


\begin{figure}

\includegraphics{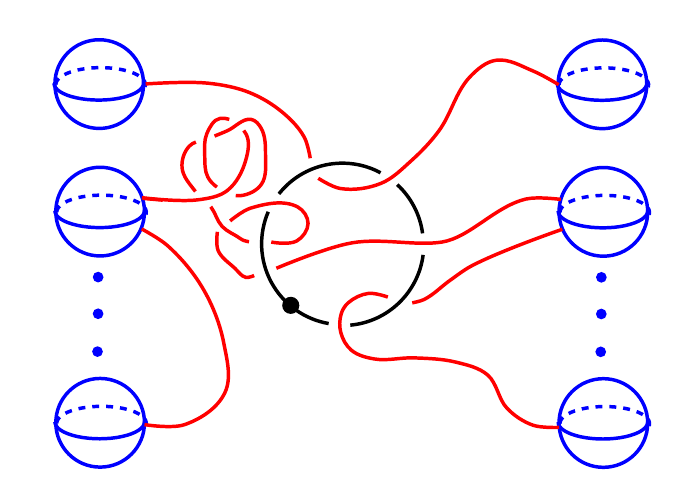}

\caption{A 2-handlebody picture of the complement of the slice disk for $K$. }
\label{slice disk complement}

\end{figure}

\begin{proof}
The slicing disk can be seen in the picture as the disk filling the
unknot that we have in the 1-sub-handlebody. This is since canceling
the 1-2 handle pairs not only recovers $K$, but also recovers the
ribbon disk $\widetilde{D}_{K}$. The definition of the dotted circle
notation is that we remove a small neighborhood of the dotted unknot
along with a small neighborhood of the disk after pushing it into
$D^{4}$. And so this is exactly the complement of the slicing disk,
$D^{4}-\widetilde{D}_{K}$.
\end{proof}
One nice fact is that, since disk slides, isotopies and handle cancellations
can be done locally, and since ribbon knots always bound an immersed
ribbon disk, this construction actually works in any 3-manifold. We
did not rely on any special properties of $S^{3}$ during the process.
One can create examples by combining a 2-handlebody picture for a
ribbon knot $K\subset S^{3}$ as in the above construction with a
Kirby picture of a 4-manifold $W$ whose boundary is the intended
3-manifold $M^{3}=\partial W$. When combining the two pictures, $K$
may be allowed to run across non-canceling 1-handles to form non-trivial
examples as shown below. In Figure~\ref{s1_cross_s2} we have a Kirby
picture of a 4-manifold whose boundary is $S^{1}\times S^{2}$. We
can see the ribbon disk for $K$ in the image on the left. The image
on the right shows the result using the technique developed above. 

\begin{figure}

\begin{overpic}[unit=1mm,scale=1]{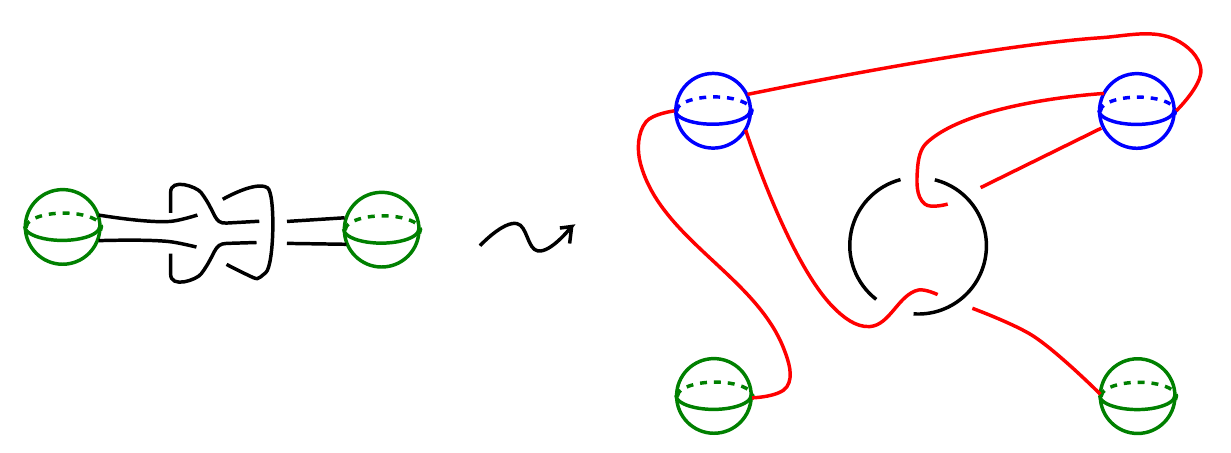}
\put(101,21){\colorbox{white}{\parbox{0.1cm}{%
    $K$}}}
\put(20,30){\colorbox{white}{\parbox{0.1cm}{%
    $K$}}}
\end{overpic}
\caption{An example ribbon knot in $S^{1}\times S^{2}$ and its decomposition.}
\label{s1_cross_s2}
\end{figure}

\end{document}